\documentclass[11pt,a4paper]{article}
\usepackage[T1]{fontenc}
\usepackage[utf8]{inputenc}
\usepackage{amsmath,amssymb,amsthm,mathrsfs,bm}
\usepackage{mathtools}
\usepackage[margin=2.6cm]{geometry}
\usepackage[numbers,sort&compress]{natbib}
\usepackage[colorlinks=true,linkcolor=blue,citecolor=blue]{hyperref}

\newcommand{\R}{{\mathbb R}}
\newcommand{\N}{{\mathbb N}}
\newcommand{\E}{\mathbb{E}}
\newcommand{\PP}{\mathbb{P}}
\newcommand{\Ocal}{\mathcal{O}}
\newcommand{\Ecal}{\mathcal{E}}
\newcommand{\Lcal}{\mathcal{L}}
\newcommand{\Vcal}{\mathcal{V}}
\newcommand{\Gcal}{\mathcal{G}}
\newcommand{\Acal}{\mathcal{A}}
\newcommand{\Gbb}{\mathbb{G}}
\newcommand{\fU}{\mathfrak{U}}
\newcommand{\calH}{\mathcal{H}}
\newcommand{\Tcal}{\mathcal{T}}
\newcommand{\Tr}{\operatorname{Tr}}
\newcommand{\norm}[1]{\left\|#1\right\|}
\newcommand{\normH}[1]{\left\|#1\right\|_{H}}
\newcommand{\normV}[1]{\left\|#1\right\|_{V}}
\newcommand{\normLp}[2]{\left\|#1\right\|_{L^{#2}}}
\newcommand{\ip}[2]{\langle #1,\, #2 \rangle}
\newcommand{\inner}[2]{\langle #1,\, #2 \rangle_{H}}
\newcommand{\abs}[1]{\left\lvert #1 \right\rvert}

\theoremstyle{plain}
\newtheorem{theorem}{Theorem}[section]
\newtheorem{lemma}[theorem]{Lemma}
\newtheorem{proposition}[theorem]{Proposition}
\newtheorem{corollary}[theorem]{Corollary}
\theoremstyle{definition}
\newtheorem{definition}[theorem]{Definition}
\newtheorem{assumption}[theorem]{Assumption}
\newtheorem{remark}[theorem]{Remark}

\title{Strong Galerkin Approximation, Malliavin Regularity, and Blow-Up for a Mixed Local--Nonlocal Stochastic Wave Equation}

\author{Francisco Delgado-Vences\thanks{Universitat Aut\`onoma de Barcelona,
Departament de Matem\`atiques, Edifici C, Facultat de Ci\`encies, 08193
Bellaterra, Spain. \texttt{FranciscoJavier.Delgado@uab.cat}}
\and
Jose Julian Pavon-Espa\~nol\thanks{{\bf Corresponding author}. Facultad de Ciencias,
Universidad Nacional Aut\'onoma de M\'exico, Ciudad Universitaria, Ciudad de
M\'exico, Mexico. \texttt{julian.pavon2@ciencias.unam.mx}}}

\date{}

\begin{document}
\maketitle

\begin{abstract}
We investigate the dynamical behavior of a class of semilinear stochastic wave equations on a bounded smooth domain $\Ocal\subset\R^d$ driven by additive trace-class noise, where the elastic response is governed by a \emph{mixed local--nonlocal} operator $\Acal=-\theta\Delta+\beta(-\Delta)^s$ with $s\in(0,1)$. A fundamental challenge in this setting is that the local and nonlocal operators do not commute on bounded domains: the natural Dirichlet basis fails to diagonalize the restricted fractional Laplacian. Consequently, we first establish the \emph{strong} convergence of the resulting non-diagonal, dense Galerkin approximation scheme. Leveraging these uniform energy bounds, we rigorously derive the associated It\^{o} energy identity. In the defocusing regime ($\varepsilon = +1$), this strong approximation yields global well-posedness on the energy-subcritical range, providing a unique probabilistically strong solution in the energy space $V \times H$. Within this variational framework, we conduct an analysis of the Malliavin regularity, showing $(u(t), v(t)) \in \mathbb{D}^{1,2}(V) \times \mathbb{D}^{1,2}(H)$, and leverage fractional Sobolev embeddings to prove that the one-dimensional probability law of $u(t, x_0)$ is absolutely continuous via the Bouleau--Hirsch criterion. In stark contrast, for the focusing regime ($\varepsilon = -1$), we establish local well-posedness and prove a rigorous dichotomy: under a negativity condition on the initial energy, either pathwise explosion occurs with positive probability in finite time, or the energy norm possesses an infinite second moment before an explicit critical time $T^*$. Finally, we observe how the dense Galerkin interaction matrices pose unique structural challenges for spatial statistical inference.
\end{abstract}

\noindent\textbf{Keywords:} stochastic wave equation; mixed local--nonlocal
operator; restricted fractional Laplacian; Galerkin approximation;
well-posedness; blow-up.

\medskip

\section{Introduction}

Consider the fractional Laplacian $(-\Delta)^{s}$, with $s\in (0,1)$, as the
\textbf{nonlocal} linear operator on $L^2(\Ocal)$, where $\Ocal \subset \R^d$
is a bounded subset with smooth boundary, defined as (see
\cite{XuCaraballo2023,Leoni-2023,Molisca2016,DiNezza-Palatucci2012})
\begin{align}\label{frac_laplace}
  (- \Delta)^{s} u = -\frac{1}{2}\, C(d,s) \int_{\R^d}
  \frac{u(x+y)+u(x-y) -2u(x) }{|y|^{d+2s}}\, dy .
\end{align}
We will study the SPDE
\begin{align}\label{stochastic_local_nonlocal}
\begin{cases}
\displaystyle \frac{\partial^2}{\partial t^2}u -\theta \Delta u
  +\beta (-\Delta)^{s}u + \varepsilon\,|u|^pu
  = \sigma \sum_{k\in\N} \lambda_k^{-\gamma}\, h_k(x)\, dw_k(t),
  &\text{in } (0,+\infty)\times\Ocal,\\[2pt]
u = 0, &\text{on } (0,+\infty)\times (\R^d \setminus \Ocal),\\[2pt]
u(0,x)= U_0(x),\quad \dfrac{\partial}{\partial t}u(0,x)= V_0(x),
  &\text{in } \Ocal,
\end{cases}
\end{align}
where $\theta,\beta,\sigma>0$, $\gamma\ge 0$, $\varepsilon\in\{+1,-1\}$
distinguishes the \emph{defocusing} ($\varepsilon=+1$) from the
\emph{focusing} ($\varepsilon=-1$) nonlinearity, and $\{w_k,\ k\geq 1\}$ is a
collection of independent standard Brownian motions on a stochastic basis
$(\Omega,\mathcal{F},\{\mathcal{F}_t\}_{t\geq 0}, \PP)$. Here $\Delta$ is the
Dirichlet Laplacian on the bounded smooth domain $\Ocal\subset \R^d$; the
sequences $\lambda_k$ and $h_k$ satisfy $-\Delta h_k = \lambda_k h_k$ with
$h_k\in H^1_0(\Ocal)$, $\{h_k\}$ orthonormal in $L^2(\Ocal)$, and we write
$u_k(t):= \langle u(t,\cdot),h_k(\cdot)\rangle$. Functions in $H^1_0(\Ocal)$
are extended by zero to $\R^d$; with this convention
$H_0^1(\Ocal)\hookrightarrow \widetilde H^{s}(\Ocal)$ for every $s\in(0,1)$,
where $\widetilde H^s(\Ocal)=\{u\in H^s(\R^d): u=0 \text{ a.e.\ on }
\R^d\setminus\Ocal\}$.

Mixed local--nonlocal operators of the form
$-\theta\Delta+\beta(-\Delta)^{s}$ have received considerable attention in
recent years, both for their analytic structure
\cite{BiagiDipierroValdinociVecchi2021,BiagiDipierroValdinociVecchi2022,BiagiDipierroValdinociVecchi2023}
and as models of superposed Brownian and $2s$-stable transport. On the
stochastic side, parabolic equations driven by the restricted fractional
Laplacian have been analysed in \cite{XuCaraballo2023,Xu-Caraballo-2022},
while blow-up for stochastic hyperbolic equations goes back to
\cite{Chow2009blowup,Chow2011,DozziLopezMimbela2010,BaoYuan2016}; the
concavity method we use in the focusing case originates with
\cite{Kaplan1963,Glassey1973,Levine1974a,Levine1974b,KnopsLevinePayne1974,PayneSattinger1975}. The hyperbolic
counterpart, which is the object of the present work, has received much less
attention; for example  in \cite{CuiHongJiSun2024}, where similar approach (Galerkin method) is used applied to a wave equation driven by only the Laplacian operator and a non-linear polynomial part is discussed.

Two features separate \eqref{stochastic_local_nonlocal} from both its purely
local and its parabolic relatives, and they govern the technical shape of the
whole paper. The first is algebraic. On a bounded domain with the exterior
Dirichlet condition, the restricted fractional Laplacian does not commute
with the Dirichlet Laplacian, so the natural Galerkin basis $\{h_k\}$
associated with the classical Dirichlet Laplacian fails to diagonalise
$\Acal$, and the projected stiffness matrix
$A_N=\theta\Lambda_N+\beta M_N$ is in general dense. The spectral decoupling
into independent scalar oscillators --- the standard route for the classical
stochastic wave equation --- is therefore unavailable. The second feature is
analytic: the equation is hyperbolic and undamped, so no smoothing
compensates for the loss of the spectral representation, and the energy space
$\Vcal=V\times H$ is the natural, and essentially the only, space in which to
work.

The absence of simultaneous diagonalisation does not, however, obstruct a
uniform energy theory, and establishing this is the starting point of the
paper. We show that the projected matrices $A_N$ satisfy coercivity and form
bounds independent of $N$ (Lemma~\ref{lem:galerkin}), that the associated
wave groups converge strongly to the limiting group, and that the
corresponding stochastic convolutions converge strongly in the energy space.
Together these yield \emph{strong} convergence of the full nonlinear Galerkin
approximation, rather than merely weak or subsequential convergence
(Proposition~\ref{prop:strongconv}).

That strong convergence is what drives the defocusing regime
($\varepsilon=+1$). Combining it with a pathwise reduction to a deterministic
wave equation driven by the stochastic convolution, we obtain global
existence, pathwise uniqueness and continuity in the energy space of a
probabilistically strong solution on the energy-subcritical range
(Theorem~\ref{thm:global-dd}). Passing to the limit in the finite-dimensional
It\^o formula along the Galerkin scheme then gives the It\^o energy identity
for the system (Theorem~\ref{thm:energy-identity}): the expected energy grows
affinely in time, with a deterministic drift equal to half the trace of the
noise covariance, and the fluctuation around it is a martingale.

In the purely local case ($\beta=0$) this averaged energy evolution law is a
known structural invariant of the stochastic wave equation, and preserving it
at the discrete level has become a design principle for numerical schemes.
Hong, Hou and Sun \cite{HongHouSun2022} construct fully discrete methods for
nonlinear stochastic wave equations with multiplicative noise - combining
compact finite differences or an interior penalty discontinuous Galerkin
discretisation in space with a discrete gradient method and Pad\'e
approximation in time - which reproduce the discrete averaged energy law
exactly, and reproduce it pathwise almost surely when the noise is additive.
The identity established here is the mixed local--nonlocal analogue of that
law, derived from a non-diagonal Galerkin scheme rather than a spectral one.
The distinction is not cosmetic: because the stiffness matrices $A_N$ are
dense, the transfer of such structure-preserving discretisations to
\eqref{stochastic_local_nonlocal} is not automatic, and the identity proved
below is precisely what such a transfer would have to preserve.

Within the same variational framework we study the regularity of the
defocusing solution as a random field. Malliavin calculus is the natural
tool: we prove that the energy solution is Malliavin differentiable, with
$(u(t),v(t))\in\mathbb{D}^{1,2}(V)\times\mathbb{D}^{1,2}(H)$, and that its
Malliavin derivative satisfies a well-defined variational evolution equation
(Theorem~\ref{thm:malliavin_diff}). In dimension one we then exploit the
fractional Sobolev embedding into H\"older spaces and the Bouleau--Hirsch
criterion \cite{nualart2006malliavin,SanzSole2005} to show that the law of
the pointwise evaluation $u(t,x_0)$ is absolutely continuous with respect to
Lebesgue measure on $\R$ (Theorem~\ref{thm:density}). This is what makes
densities --- and hence likelihoods built from point observations ---
meaningful objects for the model.

The focusing regime ($\varepsilon=-1$) behaves in the opposite way: the
energy is no longer coercive and global existence fails. Blow-up for
stochastic hyperbolic equations goes back to
\cite{Chow2009blowup,Chow2011,DozziLopezMimbela2010,BaoYuan2016}, and the
concavity argument we use originates with
\cite{Kaplan1963,Glassey1973,Levine1974a,Levine1974b,KnopsLevinePayne1974,PayneSattinger1975}.
We first prove local well-posedness up to a maximal stopping time, with an
explicit localized lower bound on that time
(Theorem~\ref{thm:local-fixed}). Under a negativity condition on the initial
energy we then show that no solution with finite second moments can exist
beyond an explicit critical time $T^{*}$, and we quantify the algebraic
profile of the resulting second-moment blow-up (Theorem~\ref{thm:blowup},
Proposition~\ref{prop:profile}). The stochastic setting forces a distinction
that is absent from the deterministic theory, since non-existence within a
moment class does not by itself imply pathwise explosion; separating the two
possibilities is the content of the dichotomy we establish
(Theorem~\ref{thm:dichotomy}): either the solution explodes pathwise with
positive probability before $T^{*}$, or the energy norm has infinite second
moment on $[0,T^{*}]$.

The Galerkin structure is developed throughout with a view towards
statistical inference. Projecting the equation and applying Girsanov's
theorem yields a likelihood whose covariance matrices are dense precisely
because of the nonlocal term, which is the obstruction discussed in
Section~\ref{sec:inference}.

Finally, we observe that the non-commutation of the local and nonlocal operators has direct implications for spatial statistical inference: projecting the equation and applying Girsanov's theorem yields a likelihood whose covariance matrices are dense precisely because of the nonlocal interaction (Section~\ref{sec:inference}).

Throughout the paper we work under the following standing assumptions.

\begin{assumption}\label{ass:standing}
\emph{(i) (Noise.)} $\gamma>d/4$. By Weyl's law
$k=N(\lambda_k)=\frac{\omega_d \abs{\Ocal}}{(2\pi)^d }\lambda_k^{d/2}
(1+o(1))$ for $k$ sufficiently large, whence $\lambda_k =C_0 k^{2/d}(1+o(1))$
and the quantity $\kappa:=\sigma^2\sum_{k\ge1}\lambda_k^{-2\gamma}$ is
finite; equivalently
$Q=\sigma^2\sum_k\lambda_k^{-2\gamma}h_k\otimes h_k$ is trace class on
$H=L^2(\Ocal)$ with $\Tr Q=\kappa$.

\emph{(ii) (Strict subcriticality.)} $0<p<\dfrac{4}{d-2}$ if $d\ge3$, and
$0<p<\infty$ if $d\le2$. Thus $p+2<2^*$ and
$H^1_0(\Ocal)\hookrightarrow\hookrightarrow L^{p+2}(\Ocal)$
\emph{compactly}.

\emph{(iii) (Data.)} $U_0\in H^1_0(\Ocal)$ and $V_0\in L^2(\Ocal)$,
deterministic or $\mathcal F_0$-measurable with
$\E\bigl(\normLp{\nabla U_0}{2}^2+\normLp{V_0}{2}^2
+\normLp{U_0}{p+2}^{p+2}\bigr)<\infty$.
\end{assumption}

Two further ranges of the parameters will play a role. The
\emph{energy-subcritical} range for the exponent is
\begin{equation}\label{eq:local-range}
0<p\le\frac{2}{d-2}\quad(d\ge3),\qquad 0<p<\infty\quad(d\le2),
\end{equation}
under which $V=H^1_0(\Ocal)\hookrightarrow L^{2(p+1)}(\Ocal)$; and the
\emph{strengthened noise condition} is
\begin{equation}\label{eq:gamma-strong}
  \gamma\;>\;\frac d4+\frac\delta2 ,
  \qquad
  \delta:=d\Bigl(\frac12-\frac1{p+2}\Bigr)\in(0,1).
\end{equation}

The rest of the paper is organized as follows. Section~\ref{sec:setting} introduces the functional setting and the properties of the mixed local--nonlocal operator. In Section~\ref{sec:convolution}, we construct the stochastic convolution and analyze its regularity. Section~\ref{sec:galerkin} is devoted to the Galerkin projection scheme and its algebraic properties. In Section~\ref{sec:defocusing}, we prove the global well-posedness of the defocusing equation and derive the It\^{o} energy identity. Section~\ref{sec:malliavin} contains the Malliavin regularity analysis and the proof of the existence of the probability density. Section~\ref{sec:focusing} is dedicated to the focusing case and the second-moment blow-up. Finally, Section~\ref{sec:inference} discusses the spatial H\"{o}lder regularity of the paths and the challenges of the dense matrix structure for statistical inference.

\section{Functional setting}\label{sec:setting}

\subsection{Fractional Sobolev spaces and the Gagliardo norm}

We recall, for $s\in(0,1)$ the fractional Laplacian is the nonlocal operator
\begin{equation}\label{eq:frac_laplace_def}
(-\Delta)^s u(x)=-\tfrac12 C(d,s)\int_{\R^d}
\frac{u(x+y)+u(x-y)-2u(x)}{|y|^{d+2s}}\,dy,\qquad x\in\Ocal.
\end{equation}
The natural energy space for this operator involves the fractional Sobolev
space $H^s(\R^d)$, defined as the set of functions $u \in L^2(\R^d)$ such
that the Gagliardo semi-norm is finite:
\begin{equation}\label{eq:gagliardo_seminorm}
    [u]_{H^s(\R^d)}^2 = \frac{C(d,s)}{2} \int_{\R^d} \int_{\R^d}
    \frac{|u(x) - u(y)|^2}{|x-y|^{d+2s}}\, dx\, dy,
\end{equation}
with full norm $\|u\|_{H^s(\R^d)}^2 = \|u\|_{L^2(\R^d)}^2 +
[u]_{H^s(\R^d)}^2$. Due to the nonlocal nature of the operator and the
exterior Dirichlet condition $u=0$ on $\R^d \setminus \Ocal$, we work in the
restricted subspace
\begin{equation}\label{eq:fractional_subspace}
    \widetilde H^s(\Ocal) = \bigl\{ u \in H^s(\R^d) : u = 0
    \text{ a.e.\ in } \R^d \setminus \Ocal \bigr\}.
\end{equation}

Let $\Ocal\subset\R^d$ be a bounded domain with $C^{1,1}$ boundary and
$H:=L^2(\Ocal)$, with inner product $\ip{\cdot}{\cdot}$ and norm
$\norm{\cdot}$. Let $\{h_k\}_{k\ge1}\subset H^1_0(\Ocal)$ be the
$L^2$-orthonormal basis of Dirichlet eigenfunctions of the classical
Laplacian, $-\Delta h_k=\lambda_k h_k$, $0<\lambda_1\le\lambda_2\le\cdots$,
and identify every $u\in H^1_0(\Ocal)$ with its extension by zero to $\R^d$. For a more detailed discussion of the fractional Laplacian operator can be found in \cite{DiNezza-Palatucci2012}.

The object of the paper is the \emph{mixed local--nonlocal} operator
\begin{equation}\label{eq:mixed-operator}
\Acal:=-\theta\Delta+\beta(-\Delta)^s,\qquad \theta,\beta>0,
\end{equation}
whose energy form is the symmetric bilinear form
\begin{equation}\label{eq:frac-form}
\Acal(u,v):=\theta\ip{\nabla u}{\nabla v}+\beta\,\mathcal M_s(u,v),\qquad
\mathcal M_s(u,v)=\frac{C(d,s)}{2}\iint_{\R^d\times\R^d}
\frac{(u(x)-u(y))(v(x)-v(y))}{|x-y|^{d+2s}}\,dx\,dy ,
\end{equation}
so that $\ip{\Acal u}{v}=\Acal(u,v)$ for $u,v\in H^1_0(\Ocal)$ (extended by
zero), the pairing being the duality $[\cdot,\cdot]$ of $H^{-1}$ with
$H^1_0$; in particular $\mathcal M_s(u,u)=[u]^2_{H^s(\R^d)}\ge0$.

\subsection{Function spaces and the Gelfand triple}

We take $H:=L^2(\Ocal)$ as pivot space (identified with its dual) and
\begin{equation}\label{eq:V-def}
V:=D(\Acal^{1/2}),\qquad
\normV{u}^2:=\ip{\Acal u}{u}=\theta\norm{\nabla u}^2+\beta[u]^2_{H^s(\R^d)} .
\end{equation}

\begin{lemma}\label{lem:forms}
For every $s\in(0,1)$ and every $u\in H^1_0(\Ocal)$,
\begin{equation}\label{eq:form-comparison}
\mathcal M_s(u,u)=[u]^2_{H^s(\R^d)}\;\le\;C_s\,\norm{\nabla u}^2,\qquad
C_s:=1+\lambda_1^{-1},
\end{equation}
with $C_s$ \emph{independent of $s$}. Consequently $\normV{\cdot}$ is
equivalent to $\norm{\nabla\cdot}$,
\begin{equation}\label{eq:V-equiv}
\theta\norm{\nabla u}^2\;\le\;\normV{u}^2\;\le\;(\theta+\beta C_s)
\norm{\nabla u}^2,
\end{equation}
so $V=H^1_0(\Ocal)$ and $V^*=H^{-1}(\Ocal)$, and
\[
V\hookrightarrow H\hookrightarrow V^*,\qquad
H^1_0(\Ocal)\hookrightarrow L^2(\Ocal)\hookrightarrow H^{-1}(\Ocal),
\]
is a Gelfand triple with dense, compact embeddings and pairing
$[\cdot,\cdot]_{V^*,V}$ extending $\ip{\cdot}{\cdot}$.
\end{lemma}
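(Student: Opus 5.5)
The plan is to pass to Fourier variables. For $u\in H^1_0(\Ocal)$ extended by zero, $u\in H^1(\R^d)$ with $\|\nabla u\|_{L^2(\R^d)}=\norm{\nabla u}$ and $\|u\|_{L^2(\R^d)}=\norm{u}$. With the normalising constant $C(d,s)$ of \eqref{eq:frac_laplace_def}, Plancherel gives the standard identity (see \cite{DiNezza-Palatucci2012})
\[
[u]^2_{H^s(\R^d)}=\int_{\R^d}|\xi|^{2s}|\hat u(\xi)|^2\,d\xi .
\]
I will rely on this identity, and in particular on the fact that the constant $C(d,s)/2$ in \eqref{eq:gagliardo_seminorm} is exactly the one making it hold with constant one. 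The main point is to check that the paper's normalisation matches this convention; if it does not, the constant $C_s$ would pick up an $s$-dependent factor.

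Next, since $s\in(0,1)$, we have the pointwise bound $|\xi|^{2s}\le 1+|\xi|^2$: split into the cases $|\xi|\le1$ and $|\xi|>1$. Therefore
\[
[u]^2_{H^s(\R^d)}\le \norm{u}^2+\norm{\nabla u}^2 .
\]
The Poincaré inequality on $\Ocal$ gives $\lambda_1\norm{u}^2\le\norm{\nabla u}^2$, which follows from the Rayleigh quotient for the Dirichlet eigenbasis $\{h_k\}$. This yields \eqref{eq:form-comparison} with $C_s=1+\lambda_1^{-1}$, visibly independent of $s$.

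For \eqref{eq:V-equiv}, the lower bound follows from $\mathcal M_s(u,u)\ge0$. The upper bound follows from \eqref{eq:form-comparison} multiplied by $\beta$ and added to $\theta\norm{\nabla u}^2$. Hence $\normV{\cdot}$ and $\norm{\nabla\cdot}$ are equivalent on $H^1_0(\Ocal)$.

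It remains to identify $D(\Acal^{1/2})$ with $H^1_0(\Acal)$'s form domain. The form $\Acal(\cdot,\cdot)$ is symmetric and closed on $H^1_0(\Ocal)$: it is closed because its norm is equivalent to the $H^1_0$ norm, which is complete. It is also coercive, by the lower bound and Poincaré. Kato's representation theorem then gives $D(\Acal^{1/2})=H^1_0(\Ocal)$ with $\|\Acal^{1/2}u\|^2=\Acal(u,u)$, so $V=H^1_0(\Ocal)$ and $V^*=H^{-1}(\Ocal)$.

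The Gelfand triple properties are standard. Density follows since $C_c^\infty(\Ocal)\subset V$ is dense in $H$. Compactness of $H^1_0\hookrightarrow L^2$ is Rellich's theorem, and compactness of $L^2\hookrightarrow H^{-1}$ follows by duality via Schauder's theorem. The pairing $[\cdot,\cdot]_{V^*,V}$ extends $\ip{\cdot}{\cdot}$ by the usual identification of $H$ with its dual.
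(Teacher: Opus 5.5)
Your proposal is correct and follows essentially the same route as the paper: the Fourier representation of the Gagliardo seminorm, the pointwise bound $|\xi|^{2s}\le 1+|\xi|^2$, Plancherel and Poincar\'e. The normalisation in \eqref{eq:gagliardo_seminorm} does match \cite[Prop.~3.6]{DiNezza-Palatucci2012}, so $C_s$ stays $s$-independent. Your appeal to Kato's representation theorem for $D(\Acal^{1/2})=H^1_0(\Ocal)$, and the Rellich/Schauder argument for the Gelfand triple, simply spell out what the paper compresses into ``$H^1_0\hookrightarrow\widetilde H^s$ identifies the space.''
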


\begin{proof}
With the standard normalisation of $C(d,s)$,
$[u]^2_{H^s(\R^d)}=\int_{\R^d}|\xi|^{2s}|\hat u(\xi)|^2\,d\xi$
\cite[Prop.~3.6]{DiNezza-Palatucci2012}; since $|\xi|^{2s}\le1+|\xi|^2$ for
all $\xi$ and all $s\in(0,1)$, Plancherel gives
$[u]^2_{H^s}\le\norm u^2+\norm{\nabla u}^2$, and Poincar\'e
$\norm u^2\le\lambda_1^{-1}\norm{\nabla u}^2$ yields
\eqref{eq:form-comparison}. Then \eqref{eq:V-equiv} is immediate, and
$H^1_0\hookrightarrow\widetilde H^s$ for every $s\in(0,1)$ identifies the
space.
\end{proof}

Note that $\abs{u}^pu\in L^{(p+2)'}(\Ocal)$ with
$\normLp{\abs{u}^pu}{(p+2)'}=\normLp{u}{p+2}^{p+1}$. Then every term in the
weak formulation \eqref{eq:weak2} below is finite for
$\varphi\in V\hookrightarrow L^{p+2}$; the noise series converges in
$L^2(\Omega;H)$ by Assumption~\ref{ass:standing}(i).

\begin{remark}[Comparison with competitive "wrong sign" operators] \label{rem:wrong_sign}
It is instructive to contrast the cooperative structure of our mixed operator $\Acal = -\theta\Delta + \beta(-\Delta)^s$ (often referred to as having the ``right sign'' \cite{BiagiDipierroValdinociVecchi2021}) with the non-cooperative or competitive regimes recently studied in the literature. For instance, in the recent preprint \cite{BiagiDipierroValdinociVecchi2026Wrong}, the authors investigate Sobolev critical elliptic problems involving the operator $-\Delta - \gamma(-\Delta)^s$, where the nonlocal fractional Laplacian appears with the ``wrong sign'' and acts as a competitive perturbation of the classical Laplacian. 

In such non-cooperative models, the operator is positive definite if and only if the competitive parameter $\gamma$ is strictly bounded from above by the optimal fractional Sobolev embedding constant:
\begin{equation} \label{eq:Cemb_def}
C_{\mathrm{emb}} := \inf \left\{ \norm{\nabla u}^2 : u \in H^1_0(\Ocal), \ [u]^2_{H^s(\R^d)} = 1 \right\} > 0.
\end{equation}
When $\gamma \in (0, C_{\mathrm{emb}})$, coercivity is recovered via the estimate $\norm{\nabla u}^2 - \gamma [u]^2_{H^s(\R^d)} \ge (1 - \gamma/C_{\mathrm{emb}})\norm{\nabla u}^2$ \cite{BiagiDipierroValdinociVecchi2026Wrong}. 

In stark contrast, in our setting, both coefficients $\theta$ and $\beta$ are strictly positive. Since the local and nonlocal energy components act cooperatively, the positive definiteness of $\Acal$ and the norm equivalence \eqref{eq:V-equiv} established in Lemma~\ref{lem:forms} hold unconditionally for any choice of positive parameters $\theta, \beta > 0$. This structural stability of the ``right sign'' operator is mathematically fundamental to our work, as it guarantees that the energy functional controls the phase-space norm globally in time without requiring any spectral threshold restrictions on the coefficients.
\end{remark}

\section{The linear stochastic group and the stochastic convolution}
\label{sec:convolution}

Throughout, $\Acal:=-\theta\Delta+\beta(-\Delta)^s$ is the mixed operator,
$H:=L^2(\Ocal)$, $V:=D(\Acal^{1/2})=H^1_0(\Ocal)$ with
$\normV{u}^2=\ip{\Acal u}{u}$ (Lemma~\ref{lem:forms}), and
\[
  \Vcal:=V\times H,\qquad
  \norm{(u,v)}_{\Vcal}^2:=\normV{u}^2+\normH{v}^2 .
\]
By Lemma~\ref{lem:forms} the operator $\Acal$ is self-adjoint, positive and
boundedly invertible on $H$ (indeed $\Acal\ge\theta\lambda_1>0$), with
compact resolvent; hence
\begin{equation}\label{eq:group-def}
  S(t)=\begin{pmatrix}\cos(t\Acal^{1/2}) & \Acal^{-1/2}\sin(t\Acal^{1/2})\\[2pt]
       -\Acal^{1/2}\sin(t\Acal^{1/2}) & \cos(t\Acal^{1/2})\end{pmatrix},
  \qquad t\in\R,
\end{equation}
is a \emph{unitary group} on $\Vcal$: $S(t)S(r)=S(t+r)$, $S(0)=\mathrm{Id}$
and $\norm{S(t)}_{\Lcal(\Vcal)}=1$ for every $t\in\R$. We write
$\Gbb:H\to\Vcal$, $\Gbb g:=(0,g)^{\!\top}$, so that
$\norm{\Gbb g}_{\Vcal}=\normH{g}$, and
\begin{equation}\label{eq:WQ}
  W_Q(t):=\sigma\sum_{k\ge1}\lambda_k^{-\gamma}h_k\,w_k(t),
  \qquad Q:=\sigma^2\sum_{k\ge1}\lambda_k^{-2\gamma}h_k\otimes h_k,
  \qquad \Tr Q=\kappa .
\end{equation}

\begin{definition}[Stochastic convolution]\label{def:z}
The \emph{stochastic convolution} associated with \eqref{eq:group-def} is the
$\Vcal$-valued process
\begin{equation}\label{eq:Zdef}
  Z(t):=\bigl(z(t),\,\partial_t z(t)\bigr)^{\!\top}
  :=\int_0^t S(t-r)\,\Gbb\,dW_Q(r),
  \qquad t\ge0 ;
\end{equation}
equivalently, $z$ is the unique solution of the \emph{linear} problem
$\partial_t^2z+\Acal z=\sigma\dot W_Q$ on $\Ocal$, $z=0$ on
$\R^d\setminus\Ocal$, $z(0)=\partial_tz(0)=0$.
\end{definition}

\begin{lemma}\label{lem:z}
Let Assumption~\ref{ass:standing}\emph{(i)} hold, i.e.\ $\gamma>d/4$ and
$\kappa=\Tr Q<\infty$. Then:
\begin{enumerate}
\item[\emph{(i)}] $Z$ is a well-defined, $\{\mathcal F_t\}$-adapted, centred
Gaussian random element of $C([0,T];\Vcal)$ for every $T>0$, and
\begin{equation}\label{eq:Zmoments}
  \E\sup_{t\le T}\norm{Z(t)}_{\Vcal}^{q}\;\le\;C_q\,(\kappa T)^{q/2},
  \qquad q\in[2,\infty),
\end{equation}
and moreover
$\E\exp\bigl(\varepsilon\sup_{t\le T}\norm{Z(t)}^2_{\Vcal}\bigr)<\infty$
for some $\varepsilon=\varepsilon(\kappa,T)>0$.
\item[\emph{(ii)}] For every $\varepsilon>0$,
$\ \PP\bigl(\sup_{t\le T}\norm{Z(t)}_{\Vcal}\le\varepsilon\bigr)>0 .$
\item[\emph{(iii)}] Let $\delta:=d\bigl(\tfrac12-\tfrac1{p+2}\bigr)$, which
satisfies $\delta\in(0,1)$ under Assumption~\ref{ass:standing}\emph{(ii)}. If
in addition \eqref{eq:gamma-strong} holds, then $Z$ has a version with
$z\in C([0,T];D(\Acal^{(1+\delta)/2}))$ and
$\partial_tz\in C\bigl([0,T];D(\Acal^{\delta/2})\bigr)\hookrightarrow
C\bigl([0,T];L^{p+2}(\Ocal)\bigr)$,
and \eqref{eq:Zmoments}, \emph{(ii)} hold with $\norm{\cdot}_{\Vcal}$
replaced by $\normV{z}+\normLp{\partial_tz}{p+2}$.
\end{enumerate}
\end{lemma}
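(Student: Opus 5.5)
The plan is to treat $Z$ as a Hilbert-space-valued stochastic convolution against the unitary group $S$ on $\Vcal$ and use three standard tools: the It\^o isometry, the factorization or unitary-dilation trick for maximal inequalities, and Gaussian concentration.

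\emph{Part (i).} Since $S(t)=S(t-r)S(-r)\cdots$ is a group, write $Z(t)=S(t)M(t)$ with $M(t):=\int_0^t S(-r)\Gbb\,dW_Q(r)$. Then $M$ is a continuous $\Vcal$-valued square-integrable martingale, because the integrand $S(-r)\Gbb Q^{1/2}$ is Hilbert--Schmidt from $H$ to $\Vcal$ with
\[
\norm{S(-r)\Gbb Q^{1/2}}_{HS}^2=\norm{\Gbb Q^{1/2}}_{HS}^2=\Tr Q=\kappa,
\]
using unitarity of $S$ and $\norm{\Gbb g}_{\Vcal}=\normH{g}$.
\begin{enumerate}
\item[(a)] Unitarity gives $\norm{Z(t)}_{\Vcal}=\norm{M(t)}_{\Vcal}$, and $t\mapsto S(t)M(t)$ is continuous by strong continuity of $S$. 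This gives adaptedness and continuity in $\Vcal$.
\item[(b)] Burkholder--Davis--Gundy applied to $M$ gives $\E\sup_{t\le T}\norm{Z}^q\le C_q(\kappa T)^{q/2}$.
\item[(c)] Gaussianity and centredness follow since $Z$ is a linear functional of $W_Q$.
\item[(d)] Exponential integrability of $\sup_{t\le T}\norm{Z(t)}^2_{\Vcal}$ follows from Fernique's theorem applied to the Gaussian element $Z$ of the separable Banach space $C([0,T];\Vcal)$. Alternatively, sum the moment bound with $q=2m$, using $C_{2m}\le C^m m!$.
\end{enumerate}

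\emph{Part (ii).} Positivity of small balls holds because the support of a centred Gaussian measure on a separable Banach space is a closed linear subspace (the closure of the Cameron--Martin space), so it contains $0$. Hence every ball around $0$ has positive measure. As a direct alternative, truncate to $N$ modes. The tail $\sum_{k>N}$ has small $L^2$ sup-norm with high probability by (i) applied with $\kappa_N=\sigma^2\sum_{k>N}\lambda_k^{-2\gamma}\to0$. The finite-dimensional part is a continuous image of $N$ Brownian motions, which stay in any small tube with positive probability. Independence of the two parts then gives positivity.

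\emph{Part (iii).} This is the main obstacle, because the noise directions $h_k$ are not eigenvectors of $\Acal$. I would repeat (i) in the shifted space $\Vcal_\delta:=D(\Acal^{(1+\delta)/2})\times D(\Acal^{\delta/2})$, on which $S$ is again unitary since it commutes with $\Acal^{\delta/2}$. The required Hilbert--Schmidt bound becomes
\[
\Tr(\Acal^{\delta/2}Q\Acal^{\delta/2})=\sigma^2\sum_k\lambda_k^{-2\gamma}\norm{\Acal^{\delta/2}h_k}^2<\infty.
\]
The key step is comparing $\Acal^{\delta/2}$ with $(-\Delta)^{\delta/2}$. By Lemma~\ref{lem:forms}, $\theta(-\Delta)\le\Acal\le(\theta+\beta C_s)(-\Delta)$ as forms. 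The Heinz--L\"owner inequality, applied with exponent $\delta\in(0,1)$, gives $\norm{\Acal^{\delta/2}h}^2\le C\norm{(-\Delta)^{\delta/2}h}^2$, which equals $C\lambda_k^{\delta}$ for $h=h_k$. Thus the trace is bounded by $C\sigma^2\sum_k\lambda_k^{\delta-2\gamma}$. By Weyl's law $\lambda_k\sim k^{2/d}$, this sum is finite exactly when $(2\gamma-\delta)\tfrac2d>1$, that is, under \eqref{eq:gamma-strong}.

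Finally, the embedding $D(\Acal^{\delta/2})=D((-\Delta)^{\delta/2})\hookrightarrow H^\delta(\Ocal)\hookrightarrow L^{p+2}$ holds because $\delta=d(\tfrac12-\tfrac1{p+2})$ is exactly the Sobolev exponent. The same Heinz comparison identifies the domains, and the case $\delta<1$ avoids boundary issues. Together with $D(\Acal^{(1+\delta)/2})\hookrightarrow V$, the moment and small-ball statements transfer to the norm $\normV{z}+\normLp{\partial_tz}{p+2}$.
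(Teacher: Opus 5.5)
Your proposal is correct and follows the paper's proof essentially step by step. The shared steps are: the factorization $Z(t)=S(t)Y(t)$ with the Hilbert--Schmidt computation, BDG and Fernique in (i); the closed-linear-support property of centred Gaussian measures in (ii); and, in (iii), the shifted space $\Vcal_\delta$ on which $S$ stays unitary, the Heinz-type comparison of $\Acal^{\delta/2}$ with $(-\Delta)^{\delta/2}$, Weyl's law and the Sobolev embedding into $L^{p+2}$. The only slip is cosmetic: the group identity you need is $S(t-r)=S(t)S(-r)$. Your alternative arguments via moment summation and mode truncation are valid but not needed.
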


\begin{proof}
(i) Since $S$ is a group, $S(t-r)=S(t)S(-r)$, whence
\begin{equation}\label{eq:group-trick}
  Z(t)=S(t)\,Y(t),\qquad Y(t):=\int_0^t S(-r)\,\Gbb\,dW_Q(r).
\end{equation}
For every $r$,
$\norm{S(-r)\Gbb Q^{1/2}}^2_{L_2(H,\Vcal)}
=\sum_k\norm{S(-r)\Gbb Q^{1/2}h_k}^2_{\Vcal}
=\sum_k\normH{Q^{1/2}h_k}^2=\Tr Q=\kappa$,
because $S(-r)$ is a $\Vcal$-isometry and $\norm{\Gbb g}_{\Vcal}=\normH g$.
Hence the integrand is deterministic and Hilbert--Schmidt uniformly in $r$,
so $Y$ is a continuous square-integrable $\Vcal$-valued martingale with
$\E\norm{Y(t)}^2_{\Vcal}=\kappa t$, and Burkholder--Davis--Gundy in Hilbert
space \cite{DaPratoZabczyk2014} gives
$\E\sup_{t\le T}\norm{Y(t)}^q_{\Vcal}\le C_q(\kappa T)^{q/2}$. Since $S(t)$
is an isometry of $\Vcal$ and $(t,y)\mapsto S(t)y$ is jointly continuous,
\eqref{eq:group-trick} yields $Z\in C([0,T];\Vcal)$ with
$\sup_{t\le T}\norm{Z}_{\Vcal}=\sup_{t\le T}\norm{Y}_{\Vcal}$, and
\eqref{eq:Zmoments}. $Z$ is a centred Gaussian element of the separable
Banach space $C([0,T];\Vcal)$ (being an $L^2$-limit of Gaussian Riemann
sums), so Fernique's theorem \cite{DaPratoZabczyk2014} gives the exponential
moment.

(ii) The support of a centred Gaussian measure on a separable Banach space is
a \emph{closed linear subspace} (the closure of its Cameron--Martin space);
in particular it contains the origin, so every ball centred at $0$ has
positive mass.

(iii) Put $\Vcal_\rho:=D(\Acal^{(1+\rho)/2})\times D(\Acal^{\rho/2})$ for
$\rho\in[0,1]$. Because $S$ commutes with every power of $\Acal$, $S$ is also
a unitary group on $\Vcal_\rho$, and repeating the computation in (i) in
$\Vcal_\rho$ gives
\[
  \norm{S(-r)\Gbb Q^{1/2}}^2_{L_2(H,\Vcal_\rho)}
   =\sigma^2\sum_{k\ge1}\lambda_k^{-2\gamma}\,\norm{\Acal^{\rho/2}h_k}^2 .
\]
By Lemma~\ref{lem:forms},
$\theta(-\Delta)\le\Acal\le(\theta+\beta C_s)(-\Delta)$ in the sense of
quadratic forms on $H^1_0(\Ocal)$; since $\rho/2\le1/2$, the Heinz--Kato
inequality upgrades this to the norm comparison
\begin{equation}\label{eq:heinz}
  \theta^{\rho/2}\norm{(-\Delta)^{\rho/2}u}\;\le\;\norm{\Acal^{\rho/2}u}
  \;\le\;(\theta+\beta C_s)^{\rho/2}\norm{(-\Delta)^{\rho/2}u},
  \qquad \rho\in[0,1],
\end{equation}
so $\norm{\Acal^{\rho/2}h_k}\le(\theta+\beta C_s)^{\rho/2}\lambda_k^{\rho/2}$
and $D(\Acal^{\rho/2})=H^\rho_0(\Ocal)$ with equivalent norms. Consequently
\[
 \norm{S(-r)\Gbb Q^{1/2}}^2_{L_2(H,\Vcal_\rho)}
 \;\lesssim\;\sum_{k\ge1}\lambda_k^{\rho-2\gamma}
 \;\asymp\;\sum_{k\ge1}k^{\frac2d(\rho-2\gamma)},
\]
which converges iff $\tfrac2d(2\gamma-\rho)>1$, i.e.\ iff
$\gamma>\tfrac d4+\tfrac\rho2$. Taking $\rho=\delta$ this is exactly
\eqref{eq:gamma-strong}, and
$H^\delta_0(\Ocal)\hookrightarrow L^{p+2}(\Ocal)$ by the Sobolev embedding
with $\tfrac1{p+2}=\tfrac12-\tfrac\delta d$. That $\delta<1$ follows from
$p+2<2^{*}=\tfrac{2d}{d-2}$. The remaining assertions are (i)--(ii) applied
in $\Vcal_\delta$.
\end{proof}

\section{Galerkin approximation for the mixed local–nonlocal operator}\label{sec:galerkin}

\subsection{Non-diagonal fractional stiffness matrix}

Let $H_N=\operatorname{span}\{h_1,\dots,h_N\}$ with $L^2$-orthogonal
projection $\Pi^N$,
$\Lambda^{(N)}=\operatorname{diag}(\lambda_1,\dots,\lambda_N)$, and let
$M^{(N)}$ be the projection of the fractional form onto $H_N$,
\begin{equation}\label{eq:Mmatrix}
M^{(N)}_{kj}:=\mathcal M_s(h_k,h_j)=\frac{C(d,s)}{2}
\iint_{\R^d\times\R^d}
\frac{(h_k(x)-h_k(y))(h_j(x)-h_j(y))}{|x-y|^{d+2s}}\,dx\,dy .
\end{equation}

\begin{lemma}\label{lem:Mproperties}
For every $N\in\N$, $M^{(N)}$ is symmetric and positive semidefinite, and for
every $U=(u_1,\dots,u_N)\in\R^N$, writing $u=\sum_{k\le N}u_kh_k$,
\begin{equation}\label{eq:Mbounds}
0\;\le\;\ip{M^{(N)}U}{U}_{\R^N}=\mathcal M_s(u,u)=[u]^2_{H^s(\R^d)}
\;\le\;C_s\norm{\nabla u}^2=C_s\sum_{k\le N}\lambda_k u_k^2 .
\end{equation}
Consequently $A^{(N)}:=\theta\Lambda^{(N)}+\beta M^{(N)}$ is symmetric with
\begin{equation}\label{eq:A-bounds}
\theta\lambda_1\abs{U}^2\;\le\;\theta\ip{\Lambda^{(N)}U}{U}
\;\le\;\ip{A^{(N)}U}{U}
\;\le\;(\theta+\beta C_s)\ip{\Lambda^{(N)}U}{U},\qquad U\in\R^N.
\end{equation}
\end{lemma}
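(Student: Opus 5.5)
The plan is to reduce every claim about the matrix $M^{(N)}$ to the bilinear form $\mathcal M_s$ through the coordinate map $U\mapsto u=\sum_{k\le N}u_kh_k$, and then to invoke Lemma~\ref{lem:forms} for the upper bound.

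First, symmetry. The integrand in \eqref{eq:Mmatrix} is symmetric under $k\leftrightarrow j$, so $M^{(N)}_{kj}=M^{(N)}_{jk}$. We also need every entry to be finite. Since $h_k\in H^1_0(\Ocal)$, Lemma~\ref{lem:forms} gives $[h_k]_{H^s}<\infty$. Cauchy--Schwarz in $L^2(\R^{2d},|x-y|^{-d-2s}dx\,dy)$ then yields $|M^{(N)}_{kj}|\le[h_k]_{H^s}[h_j]_{H^s}$, so the double integral converges absolutely.

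Second, the quadratic form identity. For $U\in\R^N$, bilinearity of $\mathcal M_s$ gives
\[
\ip{M^{(N)}U}{U}_{\R^N}=\sum_{k,j\le N}u_ku_j\,\mathcal M_s(h_k,h_j)=\mathcal M_s(u,u).
\]
Expanding the finite sum inside the integral is legitimate by the absolute convergence just noted. By definition $\mathcal M_s(u,u)=[u]^2_{H^s(\R^d)}\ge0$, which gives positive semidefiniteness. The upper bound $[u]^2_{H^s}\le C_s\norm{\nabla u}^2$ is \eqref{eq:form-comparison} applied to $u\in H_N\subset H^1_0(\Ocal)$. Finally, $L^2$-orthonormality of the $h_k$ together with $\ip{\nabla h_k}{\nabla h_j}=\ip{-\Delta h_k}{h_j}=\lambda_k\delta_{kj}$ gives $\norm{\nabla u}^2=\sum_{k\le N}\lambda_k u_k^2=\ip{\Lambda^{(N)}U}{U}$. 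This proves \eqref{eq:Mbounds}.

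Third, the bounds \eqref{eq:A-bounds}. $A^{(N)}$ is symmetric because it is a sum of symmetric matrices. Writing $\ip{A^{(N)}U}{U}=\theta\ip{\Lambda^{(N)}U}{U}+\beta\ip{M^{(N)}U}{U}$, the lower bound follows from $\ip{M^{(N)}U}{U}\ge0$ together with $\lambda_k\ge\lambda_1$. The upper bound follows from the estimate just proved, $\ip{M^{(N)}U}{U}\le C_s\ip{\Lambda^{(N)}U}{U}$.

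No step is a real obstacle. The only point needing care is the finiteness of the entries and the exchange of the finite sum with the singular double integral, and Cauchy--Schwarz against the kernel settles both.
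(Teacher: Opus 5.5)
Your proposal is correct and follows the paper's proof: symmetry read off from the kernel, bilinearity to identify $\ip{M^{(N)}U}{U}$ with $[u]^2_{H^s(\R^d)}$, Lemma~\ref{lem:forms} together with $\norm{\nabla u}^2=\sum_{k\le N}\lambda_k u_k^2$ for the upper bound, and $M^{(N)}\ge0$ with $\lambda_k\ge\lambda_1$ for \eqref{eq:A-bounds}. Your Cauchy--Schwarz argument for finiteness of the entries and for exchanging the finite sum with the singular integral is a small extra justification that the paper leaves implicit.
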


\begin{proof}
Symmetry is manifest in \eqref{eq:Mmatrix}. By bilinearity
$\ip{M^{(N)}U}{U}=\mathcal M_s(u,u)=[u]^2_{H^s(\R^d)}\ge0$; the upper bound
is Lemma~\ref{lem:forms} together with
$\norm{\nabla u}^2=\sum_k\lambda_k u_k^2$ on $H_N$. Inequality
\eqref{eq:A-bounds} follows since
$\theta\lambda_1|U|^2\le\theta\ip{\Lambda^{(N)}U}{U}$ and
$\beta\ip{M^{(N)}U}{U}\le\beta C_s\ip{\Lambda^{(N)}U}{U}$.
\end{proof}

\subsection{Uniform energy estimates}

We rewrite the defocusing SPDE ($\varepsilon=+1$ in
\eqref{stochastic_local_nonlocal}) as the first-order system
\begin{align}\label{stochastic_problem}
\begin{cases}
\displaystyle du =  v\, dt,&\\
\displaystyle dv =\theta \Delta u\, dt  -\beta (-\Delta)^{s}u\, dt
 -|u|^pu\, dt + \sigma \sum_{k\in\N} \lambda_k^{-\gamma}\, h_k(x)\,
 dw_k(t), &\text{in } (0,+\infty)\times\Ocal,\\
u = 0, &\text{on } (0,+\infty)\times (\R^d \setminus \Ocal),\\
u(0,x)= U_0(x),\quad v(0,x)= V_0(x), &\text{in } \Ocal.
\end{cases}
\end{align}
We approximate \eqref{stochastic_problem} by projecting onto $H_N$ and
writing the first-order (wave) system for the pair $(U^N,V^N)$ with
$u^N=\sum_{k\le N}u^N_kh_k$, $v^N=\sum_{k\le N}v^N_kh_k$. With the projected
nonlinearity
\begin{equation}\label{eq:Fdef}
F^{(N)}_k(U):=\int_{\Ocal}\Bigl|\sum_{j\le N}u_jh_j\Bigr|^{p}
\Bigl(\sum_{j\le N}u_jh_j\Bigr)h_k\,dx,\qquad k=1,\dots,N,
\end{equation}
the system reads
\begin{equation}\label{eq:galerkin-system}
\begin{cases}
du^N_k=v^N_k\,dt, & k=1,\dots,N,\\[4pt]
\displaystyle
dv^N_k=-\theta\lambda_k u^N_k\,dt-\beta\sum_{j=1}^N M^{(N)}_{kj}u^N_j\,dt
-F^{(N)}_k(U^N)\,dt+\sigma\lambda_k^{-\gamma}\,dw_k(t), & k=1,\dots,N,\\[6pt]
u^N_k(0)=\ip{U_0}{h_k},\qquad v^N_k(0)=\ip{V_0}{h_k}.
\end{cases}
\end{equation}
Its energy and the coercivity used repeatedly below are
\begin{equation}\label{eq:energyN}
\Ecal^N(t):=\tfrac12\abs{V^N(t)}^2
+\tfrac12\ip{A^{(N)}U^N(t)}{U^N(t)}
+\tfrac{1}{p+2}\normLp{u^N(t)}{p+2}^{p+2},
\end{equation}
\begin{equation}\label{eq:coercive}
\Ecal^N(U,V)\;\ge\;\tfrac12\abs V^2+\tfrac{\theta\lambda_1}{2}\abs U^2
\;\ge\;c_0\abs X^2,\qquad
c_0:=\tfrac12\min\{1,\theta\lambda_1\},\quad X=(U,V),
\end{equation}
the latter by \eqref{eq:A-bounds} and
$\Phi(U):=\frac1{p+2}\normLp{u}{p+2}^{p+2}\ge0$.

\subsection{Energy solutions}

Because the noise is \emph{additive}, the stochastic term below is an
explicit $H$-valued Wiener process and no It\^o integral appears in the
definition.

\begin{definition}[Energy solution]\label{def:solution}
Let $T>0$. An \emph{energy solution} of \eqref{stochastic_problem} on
$[0,T]$ is a pair $(u,v)$ of $\{\mathcal F_t\}$-adapted processes such that,
$\PP$-a.s.,
\[
u\in C([0,T];V),\qquad v\in C([0,T];H),
\]
\begin{equation}\label{eq:sol-class}
\E\sup_{t\le T}\Bigl(\normV{u(t)}^2+\norm{v(t)}^2
+\normLp{u(t)}{p+2}^{p+2}\Bigr)<\infty ,
\end{equation}
and for every $\varphi\in V$ and every $t\in[0,T]$, $\PP$-a.s.,
\begin{align}
\ip{u(t)}{\varphi}&=\ip{U_0}{\varphi}+\int_0^t\ip{v(r)}{\varphi}\,dr,
\label{eq:weak1}\\[2pt]
\ip{v(t)}{\varphi}&=\ip{V_0}{\varphi}
-\int_0^t\Bigl[\theta\ip{\nabla u(r)}{\nabla\varphi}
+\beta\,\mathcal M_s\bigl(u(r),\varphi\bigr)
+\ip{\abs{u(r)}^pu(r)}{\varphi}\Bigr]dr \notag\\
&\qquad+\;\sigma\sum_{k\ge1}\lambda_k^{-\gamma}\ip{h_k}{\varphi}\,w_k(t).
\label{eq:weak2}
\end{align}
The solution is \emph{global} if it is an energy solution on $[0,T]$ for
every $T>0$.
\end{definition}

\subsection{Auxiliary lemmas}

We first prove that for each $N\in\N$ the system
\eqref{eq:galerkin-system} has a unique global strong solution
$X^N=(U^N,V^N)\in C([0,\infty);\R^{2N})$ a.s., with an energy bound uniform
in $N$.

\begin{lemma}\label{lem:Fproperties}
Let $U=(u_1,\dots,u_N)\in\R^N$, $u=\sum_{k\le N}u_kh_k$ and
$\Phi(U):=\frac{1}{p+2}\normLp{u}{p+2}^{p+2}$. Then:
\begin{enumerate}
\item[\emph{(a)}] $\Phi\in C^2(\R^N)$, with $\nabla\Phi=F^{(N)}$ given by
\eqref{eq:Fdef} and
$D^2\Phi(U)_{kj}=(p+1)\int_\Ocal\abs{u}^p h_kh_j\,dx$;
\item[\emph{(b)}] $F^{(N)}$ is locally Lipschitz on $\R^N$ and
$\abs{F^{(N)}(U)}\le C_N\bigl(1+\abs U^{p+1}\bigr)$;
\item[\emph{(c)}]
$\ip{F^{(N)}(U)}{U}_{\R^N}=\normLp{u}{p+2}^{p+2}=(p+2)\Phi(U)\ge0$.
\end{enumerate}
\end{lemma}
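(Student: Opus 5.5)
The plan is to view $\Phi$ as the composition of the linear isometry $J:\R^N\to H_N$, $JU=u=\sum_{k\le N}u_kh_k$, with the functional $G(w):=\frac1{p+2}\int_\Ocal\abs{w}^{p+2}\,dx$ restricted to $H_N$. All three assertions reduce to calculus for the scalar map $g(r)=\frac1{p+2}\abs r^{p+2}$, which is $C^2$ on $\R$ because $p>0$. Its derivatives are
\[
g'(r)=\abs r^pr,\qquad g''(r)=(p+1)\abs r^p .
\]
These are continuous, and $\abs{g''(r)}\le(p+1)\abs r^p$.

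We also need integrability of the eigenfunctions. Each $h_k$ lies in $H^1_0(\Ocal)$, and in fact $h_k\in L^\infty(\Ocal)$ by elliptic regularity on the $C^{1,1}$ domain. Alternatively, $H^1_0\hookrightarrow L^{p+2}$ by Assumption~\ref{ass:standing}(ii) suffices for every integral below, via H\"older with exponents $\frac{p+2}{p},\,p+2,\,p+2$. Since $H_N$ is finite dimensional, all norms on it are equivalent. In particular $\normLp{u}{p+2}\le C_N\abs U$ and $\normLp{u}{\infty}\le C_N\abs U$, with constants depending only on $N$.

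For (a), differentiate under the integral sign. We have $\partial_{u_k}\,g(u(x))=g'(u(x))h_k(x)$, and for $\abs U\le R$ this is dominated by $C_R\abs{h_k}(\sum_j\abs{h_j})^{p+1}\in L^1$. Dominated convergence therefore gives $\partial_k\Phi=F^{(N)}_k$, and continuity of $F^{(N)}$. Differentiating once more gives $\partial_j F^{(N)}_k=(p+1)\int\abs u^ph_kh_j\,dx$, justified by the same domination argument. This expression is continuous in $U$ by dominated convergence, because $r\mapsto\abs r^p$ is continuous, so $\Phi\in C^2$.

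For (b), local Lipschitz continuity follows at once from $F^{(N)}\in C^1$ via the mean value theorem on balls. For the growth bound, H\"older gives
\[
\abs{F^{(N)}_k(U)}\le\normLp{u}{p+2}^{p+1}\normLp{h_k}{p+2}\le C_N\abs U^{p+1}.
\]
Summing over $k\le N$ yields the stated bound.

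For (c), linearity of the integral gives
\[
\sum_k u_kF^{(N)}_k(U)=\int\abs u^pu\cdot u\,dx=\normLp{u}{p+2}^{p+2}=(p+2)\Phi(U)\ge0 .
\]

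The only delicate point is justifying differentiation under the integral at second order when $p\in(0,1)$, where $g''$ is merely continuous and not Lipschitz. I will handle it with the explicit dominating function above, using boundedness of the $h_k$ or the $L^{p+2}$ H\"older argument, rather than relying on any smoothness of $g''$.
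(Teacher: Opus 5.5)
Your proof is correct and follows essentially the same route as the paper. Both use norm equivalence on the finite-dimensional space $H_N$ together with dominated convergence to differentiate under the integral twice, H\"older for the growth bound, and direct computation for (c). The only difference is minor: you get local Lipschitz continuity from $F^{(N)}\in C^1$ via the mean value theorem, whereas the paper uses the pointwise inequality $\bigl||a|^pa-|b|^pb\bigr|\le(p+1)(|a|+|b|)^p|a-b|$.
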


\begin{proof}
Since $H_N\subset H^1_0(\Ocal)\hookrightarrow L^{p+2}(\Ocal)$ is finite
dimensional, all norms on $H_N$ are equivalent; in particular there is
$c_N>0$ with
\begin{equation}\label{eq:norm-equiv-HN}
\normLp{u}{p+2}\le c_N\abs U,\qquad U\in\R^N .
\end{equation}
(a) The map $r\mapsto\abs r^pr$ is $C^1$ on $\R$ with derivative
$(p+1)\abs r^p$ for every $p>0$; hence
$r\mapsto\frac{1}{p+2}\abs r^{p+2}$ is $C^2$. Differentiating twice under
the integral sign --- justified on bounded sets of $\R^N$ by
\eqref{eq:norm-equiv-HN} and dominated convergence --- gives (a).
(b) From
$\bigl|\abs a^pa-\abs b^pb\bigr|\le(p+1)(\abs a+\abs b)^p\abs{a-b}$; for the
growth inequality apply H\"older's inequality with exponents
$\frac{p+2}{p},\,p+2,\,p+2$ and \eqref{eq:norm-equiv-HN}.
(c) Immediate from the definition. For $N=1$,
$F^{(1)}(u_1)=\abs{u_1}^pu_1\normLp{h_1}{p+2}^{p+2}$.
\end{proof}

\begin{lemma}\label{lem:initial-energy}
Let Assumption~\ref{ass:standing} hold and let
$\Ecal^N(0):=\frac12\abs{\Pi^NV_0}^2
+\frac12\ip{A^{(N)}\Pi^NU_0}{\Pi^NU_0}_{\R^N}
+\frac{1}{p+2}\normLp{\Pi^NU_0}{p+2}^{p+2}$. Then
\begin{equation}\label{eq:E0star}
\Ecal^{*}_0:=\sup_{N\ge1}\Ecal^N(0)\;<\;\infty,\qquad
\Ecal^{*}_0\le\tfrac12\normLp{V_0}{2}^2
+\tfrac{\theta+\beta C_s}{2}\normLp{\nabla U_0}{2}^2
+\tfrac{1}{p+2}\sup_N\normLp{\Pi^NU_0}{p+2}^{p+2},
\end{equation}
and moreover $\Ecal^N(0)\to\Ecal(0)$ as $N\to\infty$, where
$\Ecal(0):=\frac12\normLp{V_0}{2}^2+\frac{\theta}{2}\normLp{\nabla
U_0}{2}^2+\frac{\beta}{2}[U_0]^2_{H^s}+\frac{1}{p+2}\normLp{U_0}{p+2}^{p+2}$.
\end{lemma}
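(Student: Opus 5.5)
The plan is to reduce everything to the behaviour of the spectral projection $\Pi^N$ in $H$ and in $H^1_0(\Ocal)$, and then use Lemma~\ref{lem:forms} together with the Sobolev embedding $H^1_0(\Ocal)\hookrightarrow L^{p+2}(\Ocal)$ from Assumption~\ref{ass:standing}(ii). If the data are random, the argument is carried out pathwise, for a.e.\ $\omega$. Throughout, $\Pi^NU_0$ is identified with its coefficient vector in $\R^N$, so that $\abs{\Pi^NV_0}=\norm{\Pi^NV_0}$.

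\emph{Step 1: the three terms of $\Ecal^N(0)$.} By Parseval, $\norm{\Pi^NV_0}^2=\sum_{k\le N}\ip{V_0}{h_k}^2\le\normLp{V_0}{2}^2$. Since $\Pi^N$ commutes with $-\Delta$, we have
\[
\norm{\nabla\Pi^NU_0}^2=\sum_{k\le N}\lambda_k\ip{U_0}{h_k}^2\le\normLp{\nabla U_0}{2}^2 .
\]
By \eqref{eq:Mbounds} and \eqref{eq:A-bounds} of Lemma~\ref{lem:Mproperties},
\[
\ip{A^{(N)}\Pi^NU_0}{\Pi^NU_0}_{\R^N}=\theta\norm{\nabla\Pi^NU_0}^2+\beta[\Pi^NU_0]^2_{H^s}\le(\theta+\beta C_s)\normLp{\nabla U_0}{2}^2 .
\]
Adding these bounds gives \eqref{eq:E0star}, with the supremum of the $L^{p+2}$ term still kept on the right-hand side.

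\emph{Step 2: uniform control of the $L^{p+2}$ term.} This is the one potentially delicate point, because spectral projections need not be uniformly bounded on $L^{p+2}$. I avoid that question entirely by passing through $H^1_0$. By Sobolev and Poincar\'e, with a constant $C_{\rm S}$ depending only on $\Ocal,d,p$,
\[
\normLp{\Pi^NU_0}{p+2}\le C_{\rm S}\norm{\nabla\Pi^NU_0}\le C_{\rm S}\normLp{\nabla U_0}{2}.
\]
Hence $\sup_N\normLp{\Pi^NU_0}{p+2}^{p+2}\le C_{\rm S}^{p+2}\normLp{\nabla U_0}{2}^{p+2}<\infty$, which gives $\Ecal^*_0<\infty$. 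In the random case, the moment hypothesis of Assumption~\ref{ass:standing}(iii) only yields finiteness almost surely, not a moment bound. That is enough for the statement as written, which is pathwise.

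\emph{Step 3: convergence.} The tail $\sum_{k>N}\ip{V_0}{h_k}^2$ of a convergent series tends to zero, so $\Pi^NV_0\to V_0$ in $H$. Likewise $\norm{\nabla(U_0-\Pi^NU_0)}^2=\sum_{k>N}\lambda_k\ip{U_0}{h_k}^2\to0$, because $\sum_k\lambda_k\ip{U_0}{h_k}^2=\normLp{\nabla U_0}{2}^2<\infty$. Thus $\Pi^NU_0\to U_0$ in $H^1_0$.

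Each functional in $\Ecal(0)$ is continuous along these convergences:
\begin{itemize}
\item $u\mapsto\theta\norm{\nabla u}^2$ is continuous on $H^1_0$;
\item $u\mapsto[u]^2_{H^s}$ is the square of a seminorm bounded by $C_s^{1/2}\norm{\nabla u}$ (Lemma~\ref{lem:forms}), hence continuous on $H^1_0$;
\item $u\mapsto\normLp{u}{p+2}^{p+2}$ is continuous on $H^1_0$ via the embedding.
\end{itemize}
Combined with the identity for $\ip{A^{(N)}\cdot}{\cdot}$ from Step~1, this yields $\Ecal^N(0)\to\Ecal(0)$.

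I expect Step~2 to be the main obstacle, and the route through the $H^1_0$-contractivity of $\Pi^N$ is what resolves it.
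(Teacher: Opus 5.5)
Your proposal is correct and follows the paper's proof essentially step by step. Both arguments use Bessel for $V_0$, the spectral identity $\norm{\nabla\Pi^NU_0}^2=\sum_{k\le N}\lambda_k\ip{U_0}{h_k}^2\le\normLp{\nabla U_0}{2}^2$ combined with \eqref{eq:A-bounds}, and continuity of the three energy functionals along $\Pi^NU_0\to U_0$ in $H^1_0(\Ocal)$. The only cosmetic difference is how you show $\sup_N\normLp{\Pi^NU_0}{p+2}<\infty$: you bound it explicitly by $C_{\mathrm S}\normLp{\nabla U_0}{2}$, whereas the paper infers finiteness from the convergence $\Pi^NU_0\to U_0$ in $L^{p+2}(\Ocal)$. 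Both rest on the same Sobolev embedding.
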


\begin{proof}
$\abs{\Pi^NV_0}\le\normLp{V_0}{2}$ by Bessel. For the quadratic part,
$\normLp{\nabla\Pi^NU_0}{2}^2
=\sum_{k\le N}\lambda_k\ip{U_0}{h_k}_{L^2}^2\le\normLp{\nabla U_0}{2}^2$,
and therefore, by \eqref{eq:A-bounds} applied to $U=\Pi^NU_0$,
\[
\ip{A^{(N)}\Pi^NU_0}{\Pi^NU_0}\le(\theta+\beta C_s)
\norm{\nabla\Pi^NU_0}^2\le(\theta+\beta C_s)\norm{\nabla U_0}^2 .
\]
For the $L^{p+2}$ part, $\Pi^NU_0\to U_0$ in $H^1_0(\Ocal)$ because
$\normLp{\nabla(U_0-\Pi^NU_0)}{2}^2
=\sum_{k>N}\lambda_k\ip{U_0}{h_k}^2\to0$; by
Assumption~\ref{ass:standing}(ii) and the Sobolev embedding,
$\Pi^NU_0\to U_0$ in $L^{p+2}(\Ocal)$, so the last supremum is finite.
Finally
$\ip{A^{(N)}\Pi^NU_0}{\Pi^NU_0}_{\R^N}
=\theta\normLp{\nabla\Pi^NU_0}{2}^2+\beta[\Pi^NU_0]^2_{H^s}
\to\theta\normLp{\nabla U_0}{2}^2+\beta[U_0]^2_{H^s}$ by continuity of the
Gagliardo seminorm on $H^1_0$ (Lemma~\ref{lem:forms}), whence
$\Ecal^N(0)\to\Ecal(0)$.
\end{proof}

\begin{lemma}\label{lem:galerkin}
Let Assumption~\ref{ass:standing} hold. For each $N$ the system
\eqref{eq:galerkin-system} has a unique global strong solution
$X^N=(U^N,V^N)\in C([0,\infty);\R^{2N})$ a.s. Moreover, with $\Ecal^N$
defined by \eqref{eq:energyN}, for every $T>0$,
\begin{equation}\label{eq:uniform-bound}
\E\sup_{t\in[0,T]}\Ecal^N(t)\;\le\;2\Ecal^{*}_0+C T,
\end{equation}
with a right-hand side independent of $N$.
\end{lemma}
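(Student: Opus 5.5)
We will prove Lemma~\ref{lem:galerkin} by treating \eqref{eq:galerkin-system} as a finite-dimensional SDE with additive noise, obtaining local solutions from local Lipschitz continuity, and then using an It\^o computation on $\Ecal^N$ that simultaneously rules out explosion and gives \eqref{eq:uniform-bound}.

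\emph{Step 1 (local solution).} The drift $X=(U,V)\mapsto(V,\,-A^{(N)}U-F^{(N)}(U))$ is locally Lipschitz on $\R^{2N}$: the linear part is trivially Lipschitz, and $F^{(N)}$ is locally Lipschitz by Lemma~\ref{lem:Fproperties}(b). The diffusion coefficient $\operatorname{diag}(\sigma\lambda_k^{-\gamma})$ is constant. Standard SDE theory then yields a unique strong solution $X^N$ up to the explosion time $\tau_\infty=\lim_R\tau_R$, where $\tau_R:=\inf\{t:\abs{X^N(t)}\ge R\}$.

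\emph{Step 2 (It\^o formula for the energy).} We apply It\^o's formula to $\Ecal^N(X)=\frac12\abs V^2+\frac12\ip{A^{(N)}U}{U}+\Phi(U)$, which is $C^2$ by Lemma~\ref{lem:Fproperties}(a). Since $\nabla_U\Ecal^N=A^{(N)}U+F^{(N)}(U)$ and $\nabla_V\Ecal^N=V$, the drift terms cancel exactly; this uses the symmetry of $A^{(N)}$ from Lemma~\ref{lem:Mproperties}. Only the Hessian in $V$ (the identity) meets the noise, so
\[
\Ecal^N(t\wedge\tau_R)=\Ecal^N(0)+\tfrac12\sum_{k\le N}\sigma^2\lambda_k^{-2\gamma}(t\wedge\tau_R)+\int_0^{t\wedge\tau_R}\sum_{k\le N}\sigma\lambda_k^{-\gamma}V^N_k\,dw_k .
\]
The It\^o correction is bounded by $\frac{\kappa}{2}t$, uniformly in $N$.

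\emph{Step 3 (non-explosion and uniform bound).} Taking expectations, the stopped martingale has zero mean, so $\E\,\Ecal^N(t\wedge\tau_R)\le\E\,\Ecal^{*}_0+\frac{\kappa}{2}t$, where $\E\,\Ecal^{*}_0<\infty$ by Lemma~\ref{lem:initial-energy} and Assumption~\ref{ass:standing}(iii). By the coercivity \eqref{eq:coercive}, $\Ecal^N(\tau_R)\ge c_0R^2$ on $\{\tau_R\le t\}$. Hence $\PP(\tau_R\le t)\le(\E\,\Ecal^{*}_0+\kappa t/2)/(c_0R^2)\to0$, so $\tau_\infty=\infty$ a.s. and the solution is global.

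For the supremum bound we use Burkholder--Davis--Gundy:
\[
\E\sup_{s\le T\wedge\tau_R}\Bigl|\int_0^s\cdots\Bigr|\le C\,\E\Bigl(\int_0^{T\wedge\tau_R}\sigma^2\sum_k\lambda_k^{-2\gamma}(V^N_k)^2\,dr\Bigr)^{1/2}\le C\,\E\Bigl(\kappa\int_0^{T\wedge\tau_R}2\Ecal^N\,dr\Bigr)^{1/2},
\]
using $\abs{V^N_k}^2\le\abs{V^N}^2\le2\Ecal^N$. Bounding the right side by $C\bigl(2\kappa T\,\E\sup_{s\le T\wedge\tau_R}\Ecal^N\bigr)^{1/2}$ and applying Young's inequality $ab\le\frac12a^2+\frac12b^2$ gives $\frac12\E\sup\Ecal^N+C'\kappa T$. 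The quantity $\E\sup\Ecal^N$ is finite before letting $R\to\infty$ because the process is bounded up to $\tau_R$, so it may be absorbed into the left-hand side. This leaves
\[
\E\sup_{t\le T\wedge\tau_R}\Ecal^N\le2\,\E\,\Ecal^N(0)+CT,\qquad C=C(\kappa).
\]
Monotone convergence as $R\to\infty$ then gives \eqref{eq:uniform-bound} with $C$ independent of $N$, where $\Ecal^{*}_0$ is understood in expectation when the data are random.

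\emph{Main obstacle.} The delicate point is ensuring that nothing depends on $N$. The constant $c_N$ of \eqref{eq:norm-equiv-HN} enters only through local Lipschitz continuity (existence), never through the estimate. The density of $M^{(N)}$ is harmless because only the symmetry of $A^{(N)}$ is used in the cancellation, and the only $N$-dependence in the bound is the trace $\sum_{k\le N}\sigma^2\lambda_k^{-2\gamma}\le\kappa$.
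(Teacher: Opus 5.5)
Your proposal is correct and follows the paper's proof essentially step for step: local existence from the locally Lipschitz drift, the exact Hamiltonian cancellation in It\^o's formula for $\Ecal^N$ (using symmetry of $A^{(N)}$), Khasminskii-type non-explosion via coercivity and Chebyshev, and a BDG--Young absorption on $[0,T\wedge\tau_R]$ followed by monotone convergence; your bound $\langle m\rangle\le 2\kappa\int\Ecal^N$ is slightly cruder than the paper's $2\sigma^2\lambda_1^{-2\gamma}\int\Ecal^N$, which is immaterial. One small caveat, shared with the paper: for random data, the $N$-uniform finiteness of $\E\,\Ecal^N(0)$ does not follow from Assumption~\ref{ass:standing}(iii) as stated, because $\Pi^N$ need not be uniformly bounded on $L^{p+2}$, and the Sobolev route $\normLp{\Pi^NU_0}{p+2}\lesssim\norm{\nabla U_0}$ requires $\E\norm{\nabla U_0}^{p+2}<\infty$ (or deterministic data).
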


\begin{proof}
Write the system as $dX=b(X)\,dt+\Sigma\,dW^{(N)}$ with
$b(U,V)=\bigl(V,\,-A^{(N)}U-F^{(N)}(U)\bigr)$ and
$\Sigma=\bigl(0,\,\sigma(\Lambda^{(N)})^{-\gamma}\bigr)^{\!\top}$.

\smallskip\noindent
$b$ is locally Lipschitz on $\R^{2N}$ by Lemma~\ref{lem:Fproperties}(b) and
$\Sigma$ is constant, so there is a unique strong solution up to
$\tau^N_\infty=\lim_{R\to\infty}\tau_R$,
$\tau_R:=\inf\{t\ge0:\abs{X^N(t)}\ge R\}$, with
$\abs{X^N(t)}\to\infty$ as $t\uparrow\tau^N_\infty$ on
$\{\tau^N_\infty<\infty\}$.

\smallskip\noindent
By Lemma~\ref{lem:Fproperties}(a), $\Ecal^N\in C^2(\R^{2N})$ with
$\nabla_U\Ecal^N=A^{(N)}U+F^{(N)}(U)$, $\nabla_V\Ecal^N=V$ and
$D^2_V\Ecal^N=\mathrm{Id}$, so It\^o's formula applies and the generator
$\Lcal$ of \eqref{eq:galerkin-system} applied to $\Ecal^N$ gives the exact
cancellation characteristic of Hamiltonian systems:
\begin{align}
\Lcal\,\Ecal^N(U,V)
&=\ip{\nabla_U\Ecal^N}{V}_{\R^N}
+\ip{\nabla_V\Ecal^N}{-A^{(N)}U-F^{(N)}(U)}_{\R^N}
+\tfrac12\Tr\bigl(\Sigma\Sigma^{\!\top}D^2\Ecal^N\bigr)\notag\\
&=\ip{A^{(N)}U+F^{(N)}(U)}{V}_{\R^N}
-\ip{V}{A^{(N)}U+F^{(N)}(U)}_{\R^N}
+\tfrac{\sigma^2}{2}\sum_{k\le N}\lambda_k^{-2\gamma}
\;=\;\tfrac{\kappa_N}{2},\label{eq:generator}
\end{align}
i.e.\ $\Lcal\Ecal^N\equiv\kappa_N/2$ with
$\kappa_N=\sigma^2\sum_{k\le N}\lambda_k^{-2\gamma}\le\kappa$. Moreover
$\Ecal^N$ is coercive: by \eqref{eq:coercive},
$q_R:=\inf_{\abs X\ge R}\Ecal^N(X)\ge c_0R^2$ with
$c_0=\frac12\min\{1,\theta\lambda_1\}$. By It\^o's formula on
$[0,t\wedge\tau_R]$,
\begin{equation}\label{eq:ito-energy}
\Ecal^N(t\wedge\tau_R)=\Ecal^N(0)+\tfrac{\kappa_N}{2}\,(t\wedge\tau_R)
+\int_0^{t\wedge\tau_R}\!\!\ip{V^N(r)}
{\sigma(\Lambda^{(N)})^{-\gamma}dW^{(N)}(r)} ,
\end{equation}
and since the last stochastic integral is a martingale on
$[0,t\wedge\tau_R]$,
\[
\E\,\Ecal^N(t\wedge\tau_R)=\Ecal^N(0)
+\tfrac{\kappa_N}{2}\,\E(t\wedge\tau_R)
\;\le\;\Ecal^{*}_0+\tfrac{\kappa}{2}t.
\]
Note $|X^N (\tau_R)|=R$ on $\{\tau_R\le t \}$, and there
$t\wedge \tau_R=\tau_R$, hence
$\Ecal^N( t \wedge \tau_R)=\Ecal^N (X^N (\tau_R))\ge q_R$. Then Chebyshev
gives
$\PP(\tau_R\le t)\le \PP(\Ecal^N(t\wedge\tau_R)\ge q_R)
\le \frac{\E\,\Ecal^N(t\wedge\tau_R)}{q_R}
\le(\Ecal^{*}_0+\frac\kappa2t)/(c_0R^2)\to0$ as $R\to \infty$, and since
$\{\tau^N_\infty\le t\}\subset\{\tau_R\le t\}$ for every $R$, we get
$\PP(\tau^N_\infty\le t)=0$ for all $t$, i.e.\ $\tau^N_\infty=\infty$ a.s.\
(Khasminskii's test, \cite{Khasmiskii2012,CHOW-Khasminski2014}).

\smallskip\noindent
Let $m(t):=\int_0^t\ip{V^N}{\sigma(\Lambda^{(N)})^{-\gamma}dW^{(N)}}$, so
that $\Ecal^N(t)=\Ecal^N(0)+\frac{\kappa_N}{2}t+m(t)$ and, by
\eqref{eq:coercive},
\[
\langle m\rangle_t
=\sigma^2\!\int_0^t\!\sum_{k\le N}\lambda_k^{-2\gamma}\abs{v^N_k}^2dr
\;\le\;\sigma^2\lambda_1^{-2\gamma}\!\int_0^t\!\abs{V^N}^2dr
\;\le\;C_1\!\int_0^t\!\Ecal^N(r)\,dr,
\qquad C_1:=2\sigma^2\lambda_1^{-2\gamma}.
\]
Fix $R>0$ and put $\Psi_R(T):=\E\sup_{t\le T\wedge\tau_R}\Ecal^N(t)$, which
is \emph{finite} because $\Ecal^N$ is continuous and hence bounded on
$\{\abs X\le R\}$. Taking suprema and expectations in \eqref{eq:ito-energy}
we have
\begin{equation}\label{eq:psi_ineq}
    \Psi_R(T)\le \Ecal_0^{*}+\frac{\kappa T}{2}
    +\E \sup_{t\le T\wedge\tau_R} |m(t)| .
\end{equation}
By the Burkholder--Davis--Gundy inequality,
\[
\E\sup_{t\le T\wedge\tau_R}\abs{m(t)}
\;\le\;C\,\E\,\langle m\rangle_{T\wedge\tau_R}^{1/2}
\le C\,\E\Bigl(C_1\int_0^{T\wedge \tau_R} \Ecal^N dr\Bigr)^{1/2}
\le C\sqrt{C_1 T}\;\E\bigl[S^{1/2}\bigr],
\]
where $S:=\sup_{t\le T\wedge\tau_R} \Ecal^N$. By Jensen
$\E[S^{1/2}]\le (\Psi_R (T))^{1/2}$, and Young's inequality gives
$\E\sup_{t\le T\wedge\tau_R}\abs{m(t)}
\le \frac{1}{2}\Psi_R (T)+\frac{C^2 C_1 T}{2}$. Combining with
\eqref{eq:psi_ineq} and absorbing $\frac12\Psi_R(T)$ into the left-hand
side, we obtain
\[
\Psi_R (T)=\E\sup_{t\le T\wedge\tau_R}\Ecal^N(t)
\le 2\Ecal_0^{*}+(\kappa+C^2C_1) T .
\]
Letting $R\to\infty$ (monotone convergence, $\tau_R\uparrow\infty$) yields
\eqref{eq:uniform-bound} uniformly in $N$.
\end{proof}

\section{The defocusing case: global well-posedness}\label{sec:defocusing}

\subsection{Pathwise reduction}

\begin{lemma}\label{lem:reduction}
Let $z$ be as in Definition~\ref{def:z} and set $y:=u-z$. Then
$(u,\partial_tu)$ is an energy solution of the defocusing problem
\eqref{stochastic_problem} on $[0,T]$ in the sense of
Definition~\ref{def:solution} if and only if, $\PP$-a.s.\ and for
$\PP$-a.e.\ fixed $\omega$, $y$ solves the \emph{deterministic} problem
\begin{equation}\label{eq:reduced-defoc}
  \partial_t^2y+\Acal y=-\,\abs{y+z}^{p}(y+z),\qquad
  y(0)=U_0,\quad \partial_ty(0)=V_0,
\end{equation}
in $C([0,T];V)\cap C^1([0,T];H)$, i.e.\ in the Duhamel sense
\begin{equation}\label{eq:duhamel-defoc}
  \bigl(y,\partial_ty\bigr)(t)=S(t)(U_0,V_0)
  -\int_0^t S(t-r)\bigl(0,\,\abs{y+z}^p(y+z)(r)\bigr)\,dr .
\end{equation}
\end{lemma}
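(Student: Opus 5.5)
The plan is to compare the weak (variational) formulation \eqref{eq:weak1}--\eqref{eq:weak2} with the weak formulation of the linear problem satisfied by $z$, and subtract. First I record the weak form of $z$. For $\varphi\in V$, Lemma~\ref{lem:z}(i) gives $Z=(z,\partial_tz)\in C([0,T];\Vcal)$. From the mild representation \eqref{eq:Zdef} and the standard equivalence of mild and weak solutions for linear equations with additive noise \cite{DaPratoZabczyk2014}, I would derive
\[
\ip{z(t)}{\varphi}=\int_0^t\ip{\partial_tz}{\varphi}dr ,
\]
\[
\ip{\partial_tz(t)}{\varphi}=-\int_0^t\Acal(z,\varphi)\,dr+\sigma\sum_k\lambda_k^{-\gamma}\ip{h_k}{\varphi}w_k(t).
\]
The cleanest route is to test against eigenvectors $e_j$ of $\Acal$, so that the scalar components are stochastic oscillators. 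Here the $e_j$ are \emph{not} the $h_k$, but $\Acal$ is still self-adjoint with compact resolvent. One then extends to all $\varphi\in V$ by density and continuity of $\Acal(\cdot,\varphi)$ on $V$.

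For the direction ``$\Rightarrow$'', set $y=u-z$ and $w:=v-\partial_tz$. Subtracting the two weak formulations cancels the noise term identically, $\omega$ by $\omega$, and leaves
\[
\ip{y(t)}{\varphi}=\ip{U_0}{\varphi}+\int_0^t\ip{w}{\varphi}\,dr,
\]
\[
\ip{w(t)}{\varphi}=\ip{V_0}{\varphi}-\int_0^t\bigl[\Acal(y,\varphi)+\ip{\abs{y+z}^p(y+z)}{\varphi}\bigr]dr .
\]
This holds for all $t$ and all $\varphi$ in a countable dense subset of $V$ simultaneously on a full-measure set, and hence by continuity for all $\varphi\in V$. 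The first identity, together with $w\in C([0,T];H)$, gives $y\in C^1([0,T];H)$ with $\partial_ty=w$. Because $u\in C([0,T];V)\hookrightarrow L^{p+2}$, the forcing $f:=\abs{u}^pu$ lies in $C([0,T];L^{(p+2)'})\subset C([0,T];V^*)$. Thus for fixed $\omega$, $y$ is a weak solution of the deterministic linear problem $\partial_t^2y+\Acal y=-f$ in the class $C([0,T];V)\cap C^1([0,T];H)$.

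The converse ``$\Leftarrow$'' reverses these steps. Given $y$, set $u=y+z$ and $v=\partial_ty+\partial_tz$, and add the weak form of $z$. Adaptedness follows because, by the Duhamel formula \eqref{eq:duhamel-defoc}, $y$ on $[0,t]$ is a measurable functional of $z|_{[0,t]}$ and the data. For the moment bound \eqref{eq:sol-class}, I would combine \eqref{eq:Zmoments} with an energy estimate for $y$. I would either take this as part of the hypotheses or recover it from the Galerkin bound \eqref{eq:uniform-bound} once uniqueness is known; I would flag this point in the proof rather than hide it.

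The main obstacle is the equivalence, for the deterministic linear problem with forcing $-f$, between the weak formulation and the Duhamel formula \eqref{eq:duhamel-defoc} in $\Vcal$. The difficulty is that the forcing $f$ sits only in $V^*$ (really in $L^{(p+2)'}$), while $S(t-r)(0,f)$ requires $f\in H$. My plan is to use uniqueness of weak solutions of the linear wave equation in $C([0,T];V)\cap C^1([0,T];H)$, proved by the Lions--Magenes/Ladyzhenskaya regularisation-in-time argument. I would work on the extended phase space $H\times V^*$, on which $S(t)$ is also a unitary group because $\Acal^{1/2}$ is an isometry $H\to V^*$. There the Duhamel integral makes sense and is a weak solution.

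Alternatively, and this is what I would try first, I would project onto the finite span of the first $n$ eigenvectors of $\Acal$. On that span both formulations reduce to the same finite-dimensional ODE. Letting $n\to\infty$ in the weak sense identifies the two objects in $H\times V^*$, and hence in $\Vcal$ since $y\in C([0,T];V)\cap C^1([0,T];H)$. In the energy-subcritical range \eqref{eq:local-range}, $f\in C([0,T];H)$ and this step becomes the classical mild/weak equivalence.
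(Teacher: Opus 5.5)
Your proposal is correct and follows the paper's route: subtracting the weak formulation of $z$ from \eqref{eq:weak1}--\eqref{eq:weak2} cancels the additive noise $\omega$ by $\omega$, and what remains is the weak/Duhamel equivalence for the linear group $S$. The paper simply calls that equivalence classical, whereas you justify it, either in $H\times V^*$ or componentwise on the eigenvectors of $\Acal$, and the $V^*$ subtlety disappears under \eqref{eq:local-range}. The caveats you flag for the converse direction---adaptedness, which needs uniqueness of the Duhamel fixed point, and the moment bound \eqref{eq:sol-class}---are not covered by the paper's three-line proof either. They are supplied only later in Theorem~\ref{thm:global-dd} (Steps~1, 4 and 5), with the moments coming from \eqref{eq:pathwise-gronwall} plus Fernique rather than from \eqref{eq:uniform-bound}, which would first require Proposition~\ref{prop:strongconv}.
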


\begin{proof}
Because the noise is additive and $z$ solves the linear problem with zero
data, $u=y+z$ satisfies \eqref{eq:weak1}--\eqref{eq:weak2} iff $y$ satisfies
the same identities with the stochastic term deleted and the nonlinearity
evaluated at $y+z$; this is \eqref{eq:reduced-defoc}. The equivalence of the
weak and Duhamel formulations for the group $S$ is classical. Note that no
It\^o integral survives: \eqref{eq:reduced-defoc} is an ODE in $\Vcal$ for
each fixed $\omega$.
\end{proof}

\begin{theorem}[Global well-posedness, defocusing case]\label{thm:global-dd}
Let Assumption~\ref{ass:standing} hold, let $p$ satisfy the
energy-subcritical range \eqref{eq:local-range}, and let $\gamma$ satisfy
\eqref{eq:gamma-strong}. Let $U_0\in V$, $V_0\in H$ be
$\mathcal F_0$-measurable. Then \eqref{stochastic_problem} has a unique
global energy solution; it is probabilistically strong,
$\{\mathcal F_t\}$-adapted, and
\begin{equation}\label{eq:global-class}
  (u,\partial_tu)\in C\bigl([0,\infty);\Vcal\bigr)\quad\PP\text{-a.s.}
\end{equation}
Moreover, with
$e(t):=\tfrac12\normH{\partial_ty(t)}^2+\tfrac12\normV{y(t)}^2
+\tfrac1{p+2}\normLp{u(t)}{p+2}^{p+2}$ and
$\psi(t):=\normLp{\partial_tz(t)}{p+2}$ one has the \emph{pathwise} bound
\begin{equation}\label{eq:pathwise-gronwall}
  1+e(t)\;\le\;\bigl(1+e(0)\bigr)
  \exp\Bigl(C_\star\!\int_0^t\!\psi(r)\,dr\Bigr),
  \qquad t\ge0,\quad \PP\text{-a.s.},
\end{equation}
with $C_\star=C_\star(p)$; consequently, if
$\E\bigl[(1+e(0))^{q}\bigr]<\infty$ for some $q\ge1$ then
$\E\sup_{t\le T}\bigl(\normH{\partial_tu}^2+\normV{u}^2
+\normLp{u}{p+2}^{p+2}\bigr)^{q}<\infty$ for every $T>0$.
\end{theorem}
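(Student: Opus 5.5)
By Lemma~\ref{lem:reduction} the whole statement reduces to a pathwise deterministic problem. Fix $\omega$ outside a null set on which, by Lemma~\ref{lem:z}(iii), $z\in C([0,T];V)$ and $\partial_tz\in C([0,T];L^{p+2})$ for every $T$. We then solve \eqref{eq:reduced-defoc} for $y$ in three steps: local existence, an a priori energy bound, and continuation.

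\emph{Local existence.} We run a Banach fixed point on the Duhamel map \eqref{eq:duhamel-defoc} in $C([0,\tau];\Vcal)$. The nonlinearity $G(y):=\abs{y+z}^p(y+z)$ is locally Lipschitz from $V$ into $H$ in the energy-subcritical range \eqref{eq:local-range}. Indeed, $\abs{a}^pa-\abs{b}^pb$ is bounded by $(p+1)(\abs a+\abs b)^p\abs{a-b}$. Hölder's inequality with exponents $\frac{2(p+1)}{p}$ and $2(p+1)$, together with $V\hookrightarrow L^{2(p+1)}$, then gives
\[
\normH{G(y_1)-G(y_2)}\le C\bigl(\normV{y_1}+\normV{y_2}+2\normV{z}\bigr)^p\normV{y_1-y_2}.
\]
Since $S$ is unitary on $\Vcal$, this yields a contraction on a ball, with a time $\tau$ depending only on $\normV{y(0)}$, $\norm{\partial_ty(0)}$ and $\sup_{[0,T]}\normV{z}$. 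Uniqueness in $C([0,\tau];\Vcal)$ follows from the same Lipschitz bound and Gronwall's inequality, which gives pathwise uniqueness.

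\emph{A priori bound (the main step).} The difficulty is that the energy of $y$ is not conserved, because the potential is evaluated at $u=y+z$ rather than at $y$. To handle this we use the mixed energy $e(t)$ from the statement, whose potential term is built from $u$. We first compute formally for smooth data, using $\partial_t y=\partial_t u-\partial_t z$:
\[
\frac{d}{dt}\Bigl(\tfrac12\normH{\partial_ty}^2+\tfrac12\normV{y}^2\Bigr)=-\ip{\abs u^pu}{\partial_ty},
\qquad
\frac{d}{dt}\tfrac1{p+2}\normLp{u}{p+2}^{p+2}=\ip{\abs u^pu}{\partial_ty+\partial_tz}.
\]
Adding the two, the terms in $\partial_ty$ cancel and we are left with
\[
e'(t)=\ip{\abs u^pu}{\partial_tz}\le\normLp{u}{p+2}^{p+1}\psi(t)\le(p+2)^{\frac{p+1}{p+2}}e^{\frac{p+1}{p+2}}\psi\le C_\star(1+e)\psi.
\]
Gronwall's inequality then gives \eqref{eq:pathwise-gronwall}. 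This is precisely why the strengthened condition \eqref{eq:gamma-strong} is needed: it guarantees $\psi\in L^1_{loc}$. To make the computation rigorous we regularize: either mollify with $(1+\epsilon\Acal)^{-1}$, or, more naturally here, derive the identity for the Galerkin system and pass to the limit using Lemma~\ref{lem:galerkin} and the strong convergence of Proposition~\ref{prop:strongconv}. Justifying this limit is the step I expect to be most delicate. The fallback is a standard energy-equality argument for the linear wave equation with forcing $G(y)\in C([0,T];H)$, combined with a chain rule for $\normLp{u}{p+2}^{p+2}$ along $u\in C^1([0,T];L^{p+2})$ after smoothing in time.

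\emph{Globalization, adaptedness and moments.} The bound \eqref{eq:pathwise-gronwall} controls $\normV{y}+\normH{\partial_ty}$ on every $[0,T]$, so the local existence time is uniform and the solution extends to $[0,\infty)$. Adding $Z$ back gives \eqref{eq:global-class}. Adaptedness follows because the Picard iterates are built causally from $(U_0,V_0,z|_{[0,t]})$, so the solution is probabilistically strong. For the moment statement, raise \eqref{eq:pathwise-gronwall} to the power $q$ and apply Hölder's inequality:
\[
\E\sup_{t\le T}(1+e)^q\le\bigl(\E(1+e(0))^{2q}\bigr)^{1/2}\bigl(\E e^{2qC_\star\int_0^T\psi}\bigr)^{1/2}.
\]
The last factor is finite by Fernique's theorem in the form of Lemma~\ref{lem:z}(iii), since $\int_0^T\psi\le T\sup\psi$ and exponentials of linear functionals of a Gaussian have all moments. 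Finally, $\normV{u}^2\le2\normV{y}^2+2\normV{z}^2$, and the moments of $z$ come from \eqref{eq:Zmoments}. As written, this Hölder step needs a $2q$-moment of $e(0)$. For deterministic data the hypothesis with $q$ suffices; otherwise, when $e(0)$ is $\mathcal F_0$-measurable and hence independent of $Z$, we condition on $\mathcal F_0$ and the factorization needs only the $q$-th moment.
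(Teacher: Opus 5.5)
Your proposal follows the paper's proof essentially step for step: pathwise reduction to \eqref{eq:reduced-defoc}, a contraction for the Duhamel map using $V\hookrightarrow L^{2(p+1)}$, the mixed energy $e$ whose derivative collapses to $\ip{\abs u^pu}{\partial_tz}$, Gronwall, continuation, causal adaptedness, and Fernique for the moments; your conditioning on $\mathcal F_0$ (independence of $e(0)$ and $Z$) is in fact a cleaner route to the $q$-th moment bound than the paper's bare H\"older step, which without independence needs more than $q$ moments of $e(0)$. Two small corrections: the Galerkin route for the energy identity is circular as stated, since Proposition~\ref{prop:strongconv} assumes this theorem's hypotheses and uses the global solution $Y$ and its bound; and $u$ is not in $C^1([0,T];L^{p+2})$ (only $\partial_tz$ is $L^{p+2}$-valued), so the chain rule for $\normLp{u}{p+2}^{p+2}$ must pair $\abs u^pu\in C([0,T];H)$ with $\partial_tu\in C([0,T];H)$, as the paper does, after which your $(1+\epsilon\Acal)^{-1}$ or time mollification goes through.
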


\begin{proof}
\emph{Step 1 (local solvability).} We will prove local existence using contraction argument. Fix $\omega$ outside the null
set of Lemma~\ref{lem:z}(i). By \eqref{eq:local-range},
$V\hookrightarrow L^{2(p+1)}(\Ocal)$ with constant $C_S$, and for
$w_1,w_2,\zeta\in V$
\begin{align}
  \norm{N(w_1,\zeta)-N(w_2,\zeta)}_{H}
  &\le(p+1)C_S^{p+1}\bigl(\normV{w_1+\zeta}
  +\normV{w_2+\zeta}\bigr)^{p}\normV{w_1-w_2},
  \label{eq:Nlip-defoc}\\
  \norm{N(w,\zeta)}_{H}&\le C_S^{p+1}\normV{w+\zeta}^{p+1},
  \label{eq:Ngrowth-defoc}
\end{align}
where $N(w,\zeta):=\abs{w+\zeta}^p(w+\zeta)$; both follow from
$\bigl||a|^pa-|b|^pb\bigr|\le(p+1)(|a|+|b|)^p|a-b|$ and H\"older on $L^2$
with exponents $\frac{p+1}{p}$ and $p+1$. 
Fix $T_0>0$ and put
\begin{equation}\label{eq:Rdef}
  R:=1+\norm{(U_0,V_0)}_{\Vcal}+\sup_{t\le T_0}\norm{Z(t)}_{\Vcal}
  \in[1,\infty),
\end{equation}
finite for the fixed $\omega$ by Lemma~\ref{lem:z}(i). For $T\in(0,T_0]$ set
\[
  X_{T,R}:=\Bigl\{Y=(y_1,y_2)\in C([0,T];\Vcal):\;
  \norm{Y}_T:=\sup_{t\le T}\norm{Y(t)}_{\Vcal}\le 2R\Bigr\},
\]
a closed subset of the Banach space $C([0,T];\Vcal)$ and hence a complete
metric space for the distance induced by $\norm\cdot_T$. Define the Duhamel
map
\begin{equation}\label{eq:duhamel-map}
  (\Tcal Y)(t):=S(t)(U_0,V_0)
  -\int_0^tS(t-r)\,\Gbb\,N\bigl(y_1(r),z(r)\bigr)\,dr,
  \qquad t\in[0,T],
\end{equation}
so that \eqref{eq:duhamel-defoc} is precisely the equation $\Tcal Y=Y$. The
map is well defined with values in $C([0,T];\Vcal)$ given by
\eqref{eq:Nlip-defoc} the map $N(\cdot,\zeta):V\to H$ is Lipschitz on
bounded sets, so $r\mapsto\Gbb N(y_1(r),z(r))$ is continuous from $[0,T]$
into $\Vcal$, and $t\mapsto S(t)$ is strongly continuous, whence the
integral term is continuous in $t$ by dominated convergence.
 
Two elementary facts are used repeatedly: $\norm{S(t)}_{\Lcal(\Vcal)}=1$ for
all $t$, and $\norm{\Gbb g}_{\Vcal}=\normH g$. Note also that
$\normV{z(r)}\le\norm{Z(r)}_{\Vcal}\le R$ for $r\le T_0$, so every
$Y\in X_{T,R}$ satisfies
\begin{equation}\label{eq:ball-bound}
  \normV{y_1(r)+z(r)}\le\normV{y_1(r)}+\normV{z(r)}\le 3R,
  \qquad r\le T .
\end{equation}
 
\emph{$\Tcal$ maps $X_{T,R}$ into itself.} By \eqref{eq:Ngrowth-defoc} and
\eqref{eq:ball-bound},
\[
  \norm{(\Tcal Y)(t)}_{\Vcal}
  \le\norm{(U_0,V_0)}_{\Vcal}+\int_0^t\normH{N(y_1,z)}\,dr
  \le R+T\,C_S^{p+1}(3R)^{p+1}.
\]
Hence $\norm{\Tcal Y}_T\le 2R$ as soon as
$T\,C_S^{p+1}3^{p+1}R^{p+1}\le R$, i.e.\ as soon as
\begin{equation}\label{eq:Tcond1}
  T\;\le\;(3C_S)^{-(p+1)}R^{-p} .
\end{equation}
 
\emph{$\Tcal$ is a $\tfrac12$-contraction.} For $Y,\widetilde Y\in X_{T,R}$
the initial-data terms in \eqref{eq:duhamel-map} cancel, so only the Duhamel
integral remains. Applying \eqref{eq:Nlip-defoc} with $\zeta=z(r)$,
$w_1=y_1(r)$, $w_2=\widetilde y_1(r)$, and bounding
$\bigl(\normV{y_1+z}+\normV{\widetilde y_1+z}\bigr)^p\le(6R)^p$ by
\eqref{eq:ball-bound},
\[
  \norm{(\Tcal Y)(t)-(\Tcal\widetilde Y)(t)}_{\Vcal}
  \le\int_0^t\normH{N(y_1,z)-N(\widetilde y_1,z)}\,dr
  \le T\,(p+1)C_S^{p+1}(6R)^p\,\norm{Y-\widetilde Y}_T ,
\]
where we used $\normV{y_1(r)-\widetilde y_1(r)}
\le\norm{Y(r)-\widetilde Y(r)}_{\Vcal}$. The contraction factor is therefore
at most $\tfrac12$ as soon as
\begin{equation}\label{eq:Tcond2}
  T\;\le\;\bigl[2(p+1)6^pC_S^{p+1}\bigr]^{-1}R^{-p} .
\end{equation}
Both \eqref{eq:Tcond1} and \eqref{eq:Tcond2} hold for
\begin{equation}\label{eq:Tlocal}
  T\;\le\;\underline T:=c_\star R^{-p}\wedge T_0,
  \qquad
  c_\star:=\min\Bigl\{(3C_S)^{-(p+1)},\,
  \bigl[2(p+1)6^pC_S^{p+1}\bigr]^{-1}\Bigr\},
\end{equation}
and $c_\star=c_\star(p,C_S)$. Banach's fixed point theorem gives a unique
$(y,\partial_ty)\in C([0,\underline T];\Vcal)$.

\emph{Step 2 (energy identity for the reduced problem).} On the interval of
existence, $\abs{u}^pu\in C([0,T];H)$ by \eqref{eq:Ngrowth-defoc}, so
$\partial_t^2y=-\Acal y-\abs u^pu\in C([0,T];V^{*})$ and
$\partial_ty\in C([0,T];H)$; testing \eqref{eq:reduced-defoc} with
$\partial_ty$ is admissible after the standard mollification in time
(\cite{LionsMagenes1972}; see also \cite[Ch.~III, Lemma~3.2]{Temam1997}),
and the chain rule for
$t\mapsto\frac{1}{p+2}\normLp{u(t)}{p+2}^{p+2}$ is justified because
$\abs{u}^{p+1}\in C([0,T];L^2)$ (again $V\hookrightarrow L^{2(p+1)}$) while
$\partial_tu\in C([0,T];H)$. We compute the three contributions separately, writing
$e=e_1+e_2+e_3$ with
\[
  e_1:=\tfrac12\normH{\partial_ty}^2,\qquad
  e_2:=\tfrac12\normV{y}^2=\tfrac12\Acal(y,y),\qquad
  e_3:=\tfrac1{p+2}\normLp{u}{p+2}^{p+2},
\]
where $\Acal(\cdot,\cdot)$ is the symmetric bilinear form \eqref{eq:frac-form}
associated with $\Acal$. Differentiating $e_1$ and
substituting the equation \eqref{eq:reduced-defoc},
$\partial_t^2y=-\Acal y-\abs u^pu$, gives
\begin{equation}\label{eq:e1prime}
  e_1'=\ip{\partial_t^2y}{\partial_ty}
      =-\ip{\Acal y}{\partial_ty}-\ip{\abs u^pu}{\partial_ty}.
\end{equation}
Because $\Acal(\cdot,\cdot)$
is bilinear and symmetric, the two cross terms
produced by the product rule are equal and the factor $\tfrac12$ is exactly
absorbed:
\begin{equation}\label{eq:e2prime}
  e_2'=\tfrac12\bigl[\Acal(\partial_ty,y)+\Acal(y,\partial_ty)\bigr]
      =\Acal(y,\partial_ty)=\ip{\Acal y}{\partial_ty}.
\end{equation}
Adding \eqref{eq:e1prime} and \eqref{eq:e2prime}, the terms
$\mp\ip{\Acal y}{\partial_ty}$ cancel:
\begin{equation}\label{eq:e12}
  e_1'+e_2'=-\ip{\abs u^pu}{\partial_ty}.
\end{equation} Since $\partial_tu=\partial_ty+\partial_tz$,
Pointwise in $x$, with
$F(a):=\abs a^{p+2}$ one has $F\in C^1(\R)$ for $p>0$ with
$F'(a)=(p+2)\abs a^{p}a$, so
$\partial_t\abs u^{p+2}=(p+2)\abs u^pu\,\partial_tu$ and
\begin{equation}\label{eq:e3prime}
  e_3'=\ip{\abs u^pu}{\partial_tu}.
\end{equation}
Splitting it according to the reduction $u=y+z$,
\begin{equation}\label{eq:e3}
  e_3'=\ip{\abs u^pu}{\partial_ty}+\ip{\abs u^pu}{\partial_tz}.
\end{equation}
Adding
\eqref{eq:e12} and \eqref{eq:e3}, the terms
$\mp\ip{\abs u^pu}{\partial_ty}$ cancel in turn, and only the coupling of
the nonlinearity with the \emph{velocity of the stochastic convolution}
survives:
\begin{equation}\label{eq:e-identity}
  e'(t)=\bigl\langle \abs{u}^pu,\ \partial_tz\bigr\rangle ,
  \qquad t\in[0,\underline T],
\end{equation}
equivalently, in integrated form,
$e(t)=e(0)+\int_0^t\ip{\abs u^pu}{\partial_tz}\,dr$.

\emph{Step 3 (global bound).} By H\"older with exponents $\frac{p+2}{p+1}$
and $p+2$,
\[
  \abs{e'(t)}\;\le\;\normLp{u}{p+2}^{p+1}\,\normLp{\partial_tz}{p+2}
  \;=\;\bigl((p+2)\,e_3(t)\bigr)^{\frac{p+1}{p+2}}\psi(t)
  \;\le\;C_\star\bigl(1+e(t)\bigr)\psi(t),
\]
where $e_3:=\frac1{p+2}\normLp{u}{p+2}^{p+2}\le e$ and we used
$a^{\frac{p+1}{p+2}}\le1+a$.  Gronwall's lemma gives
\eqref{eq:pathwise-gronwall}. Since $\psi\in C([0,T])$ a.s.\ by
Lemma~\ref{lem:z}(iii), $\sup_{[0,T]}e<\infty$ a.s.; as
$\normV{y}^2\le2e$ and $\normH{\partial_ty}^2\le2e$, the blow-up
alternative of Step~1 cannot occur and $\underline T$ can be continued to
$+\infty$. This proves \eqref{eq:global-class}, using $u=y+z$ and
Lemma~\ref{lem:z}.

\emph{Step 4 (adaptedness).} By Steps~1--3 the deterministic solution map
\[
  \Theta:\ \Vcal\times C([0,T];V)\longrightarrow C([0,T];\Vcal),\qquad
  (U_0,V_0,\zeta)\longmapsto (y,\partial_ty),
\]
is well defined and locally Lipschitz (the fixed-point estimates
\eqref{eq:Nlip-defoc}--\eqref{eq:Ngrowth-defoc} depend on $\zeta$ only
through $\sup_t\normV{\zeta}$). Since $(U_0,V_0)$ is
$\mathcal F_0$-measurable and $t\mapsto z(t)$ is
$\{\mathcal F_t\}$-adapted and continuous in $V$, and since $\Theta$
restricted to $[0,t]$ depends on $\zeta$ only through $\zeta|_{[0,t]}$, the
process $y=\Theta(U_0,V_0,z)$ is $\{\mathcal F_t\}$-adapted, and so is
$u=y+z$. 

\emph{Step 5 (uniqueness and moments).} Pathwise uniqueness is immediate
from Step~1 (the fixed point is unique) and coincides with
Proposition~\ref{prop:uniqueness} below. For the moment bound,
Lemma~\ref{lem:z}(iii) and Fernique's theorem give
$\E\exp\bigl(\lambda\sup_{t\le T}\psi(t)\bigr)<\infty$ for every
$\lambda>0$; since $\int_0^T\psi\le T\sup_{t\le T}\psi$,
\eqref{eq:pathwise-gronwall} and H\"older gives the stated conclusion.
\end{proof}

\subsection{Strong Galerkin convergence and the energy identity}

Write $\Acal_N:=\theta\Lambda^{(N)}+\beta M^{(N)}=A^{(N)}$ for the projected
operator on $H_N$ (Lemma~\ref{lem:Mproperties}), $S_N$ for the corresponding
unitary group on $\Vcal_N:=H_N\times H_N$ normed by
$\ip{\Acal_NU}{U}+\abs V^2$, and
\begin{equation}\label{eq:ZN}
  Z^N(t):=\bigl(z^N,\partial_tz^N\bigr)(t)
  :=\int_0^tS_N(t-r)\,\Pi^N\Gbb\,dW_Q(r).
\end{equation}

\begin{lemma}\label{lem:trotter}
$\bigcup_N H_N$ is a form core for $\Acal$; hence $\Acal_N\Pi^N\to\Acal$ in
the strong resolvent sense and, by the Trotter--Kato theorem,
\[
  S_N(t)\Pi^N X\longrightarrow S(t)X\quad\text{in }\Vcal,
  \qquad\text{uniformly for }t\text{ in compact subsets of }\R,
\]
for every $X\in\Vcal$. Moreover, uniformly in $N$ and $\rho\in[0,1]$,
\begin{equation}\label{eq:heinzN}
  \theta^{\rho/2}\bigl\lvert(\Lambda^{(N)})^{\rho/2}U\bigr\rvert
  \;\le\;\bigl\lvert\Acal_N^{\rho/2}U\bigr\rvert
  \;\le\;(\theta+\beta C_s)^{\rho/2}
  \bigl\lvert(\Lambda^{(N)})^{\rho/2}U\bigr\rvert,
  \qquad U\in\R^N .
\end{equation}
\end{lemma}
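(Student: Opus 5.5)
The proof has three parts: the form core property, strong resolvent convergence followed by Trotter--Kato, and the uniform Heinz comparison \eqref{eq:heinzN}.

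\emph{Form core.} The form domain of $\Acal$ is $V=H^1_0(\Ocal)$ by Lemma~\ref{lem:forms}, with $\normV{\cdot}$ equivalent to $\norm{\nabla\cdot}$. For $u\in V$ we have $\norm{\nabla(u-\Pi^Nu)}^2=\sum_{k>N}\lambda_k\ip{u}{h_k}^2\to0$, exactly as in the proof of Lemma~\ref{lem:initial-energy}. Hence $\Pi^Nu\to u$ in $V$, and $\bigcup_NH_N$ is dense in the form norm.

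\emph{Strong resolvent convergence.} The operator $\Acal_N\Pi^N$ on $H_N$ is the operator of the restricted form $\Acal|_{H_N\times H_N}$. By \eqref{eq:A-bounds} these forms satisfy $\ip{\Acal_NU}{U}\ge\theta\lambda_1\abs U^2$ uniformly in $N$. Fix $f\in H$ and set $w_N:=(\Acal_N+1)^{-1}\Pi^Nf\in H_N$, the Galerkin solution of $\Acal(w_N,\varphi)+\ip{w_N}{\varphi}=\ip f\varphi$ for $\varphi\in H_N$. Let $w:=(\Acal+1)^{-1}f$. Céa's lemma in the $V$-norm, which holds because the forms are coercive with $N$-independent constants, gives
\[
\normV{w-w_N}\le C\inf_{\varphi\in H_N}\normV{w-\varphi}\le C\normV{w-\Pi^Nw}\to0 .
\]
This is strong resolvent convergence in $H$, with the resolvents understood as $(\Acal_N+1)^{-1}\Pi^N$.

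\emph{Trotter--Kato for the groups.} I would not quote an abstract theorem but transfer the convergence to $S_N(t)\Pi^N$ on $\Vcal$ in two steps.

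First, for data $X=(u_0,v_0)$ in a dense subset, such as $u_0\in D(\Acal)$, $v_0\in V$, I compare $S_N(t)\Pi^NX$ with $\Pi^NS(t)X$. The difference solves a projected wave equation with forcing $\Pi^N\Acal(\text{exact})-\Acal_N\Pi^N(\text{exact})$ measured in $H_N$. Its energy is controlled by the Galerkin projection error of the exact solution in $V$. I expect to use the Ritz projection $R_N$ associated with the form $\Acal$ rather than $\Pi^N$. That error is bounded by $\normV{(1-R_N)S(t)X}$, which tends to $0$ uniformly for $t$ in compact sets: the orbit $\{S(t)X\}_{|t|\le T}$ is compact in $\Vcal$, and $R_N\to\mathrm{Id}$ strongly and uniformly bounded, hence uniformly on compacts.

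Second, I extend to all $X\in\Vcal$ by density, using $\norm{S_N(t)\Pi^N}_{\Lcal(\Vcal)}\le C$ uniformly in $N$. This bound holds because $S_N$ is unitary in the $\Acal_N$-energy norm, and on $H_N$ that norm coincides with $\normV{\cdot}$, since $\ip{\Acal_NU}{U}=\normV{u}^2$.

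The main obstacle I anticipate is exactly this transfer from resolvent to group convergence. The generators of the wave groups are not self-adjoint in $H$, and $\Pi^N$ is not the $V$-orthogonal projection, because $\Acal$ is non-diagonal in $\{h_k\}$. This is why I plan to use the Ritz projection $R_N$ in the energy estimate.

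\emph{Uniform Heinz comparison.} I argue as for \eqref{eq:heinz}. By \eqref{eq:A-bounds},
\[
\theta\Lambda^{(N)}\le\Acal_N\le(\theta+\beta C_s)\Lambda^{(N)}
\]
as symmetric matrices. The Löwner--Heinz inequality says $t\mapsto t^{\alpha}$ is operator monotone for $\alpha=\rho\in[0,1]$, and this gives $\theta^\rho(\Lambda^{(N)})^\rho\le\Acal_N^\rho\le(\theta+\beta C_s)^\rho(\Lambda^{(N)})^\rho$. Taking quadratic forms at $U$ and square roots yields \eqref{eq:heinzN}. The constants depend only on $\theta,\beta,C_s$, hence are uniform in $N$ and $\rho$.
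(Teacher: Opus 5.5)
Your proof is correct. The form-core, resolvent and Heinz parts are the paper's argument. Céa's lemma for the form $\Acal(\cdot,\cdot)+\ip{\cdot}{\cdot}$ is the paper's observation that $(\Acal_N+I)^{-1}\Pi^Nf$ is the $b$-orthogonal projection of $(\Acal+I)^{-1}f$ onto $H_N$. Löwner--Heinz operator monotonicity is its Heinz--Kato step.

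\textbf{Where you differ.} The real difference is the passage to the groups.
\begin{itemize}
\item The paper transfers strong resolvent convergence of $\Acal_N\Pi^N$ to the skew-adjoint generators $\Gcal_N$ on $\Vcal$, then quotes the abstract Trotter--Kato theorem.
\item You run the classical semi-discrete energy error analysis for the wave equation. With $R_N$ your Ritz projection, $\vartheta:=R_Nu-u_N$ solves $\vartheta''+\Acal_N\vartheta=-\Pi^N(I-R_N)u''$, and a density argument finishes.
\end{itemize}
In your route the resolvent convergence is never actually used. The form-core property enters directly through $R_N\to\mathrm{Id}$ strongly on $V$.

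\textbf{What each route buys.} Your version is self-contained and makes explicit two points the paper's one-line transfer leaves implicit. First, the approximating groups live on the subspaces $\Vcal_N$. Second, the data map $\Pi^N$ is not the $\Vcal$-orthogonal projection onto $\Vcal_N$; that projection would be $R_N\times\Pi^N$. It also gives a quantitative bound in terms of Ritz-projection errors along the exact orbit. The paper's version is shorter and needs no extra regularity on the data.

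\textbf{Two details to adjust.}
\begin{itemize}
\item Take the dense class to be $D(\Acal^{3/2})\times D(\Acal)$ rather than $D(\Acal)\times V$. You need $u''=-\Acal u\in C([-T,T];V)$ for $R_Nu''$ to be defined and for $\norm{(I-R_N)u''}\to0$ uniformly in $t$.
\item The uniform bound on $S_N(t)\Pi^N$ in $\Lcal(\Vcal)$ also needs $\normV{\Pi^Nu}\le\bigl((\theta+\beta C_s)/\theta\bigr)^{1/2}\normV{u}$. This holds because $\Pi^N$ contracts $\norm{\nabla\cdot}$, combined with \eqref{eq:V-equiv}.
\end{itemize}
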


\begin{proof}
Write $b(v,w):=\Acal(v,w)+\ip{v}{w}$ and $\norm{v}_b^2:=b(v,v)$, a Hilbert
norm on $V$ equivalent to $\normV\cdot$ by Poincar\'e, and set
\[
  R_N:=(\Acal_N+I)^{-1}\Pi^N\ \ (\text{zero on }H_N^{\perp}),
  \qquad R:=(\Acal+I)^{-1},
\]
the resolvents at $-1$ of the forms $a_N:=a|_{H_N\times H_N}$ (extended by
$+\infty$ off $H_N$) and $a:=\Acal(\cdot,\cdot)$.
Fix $f\in H$ and put $u:=Rf$, $u_N:=R_Nf$, so that $b(u,v)=\ip{f}{v}$ for all
$v\in V$ and $b(u_N,v_N)=\ip{f}{v_N}$ for all $v_N\in H_N$. Subtracting gives
$b(u-u_N,v_N)=0$ on $H_N$, so $u_N$ is the $b$-orthogonal projection of $u$
onto $H_N$ and
\begin{equation}\label{eq:cea}
  \norm{u-u_N}_b=\operatorname{dist}_b(u,H_N).
\end{equation}
Testing the first identity with $v=u_N$ gives
$b(u,u_N)=\ip{f}{u_N}=b(u_N,u_N)$, whence
\begin{equation}\label{eq:energy-gap}
  \norm{u-u_N}_b^2=b(u,u)-b(u_N,u_N)=\ip{f}{u-u_N}_H .
\end{equation}
By \eqref{eq:cea} and \eqref{eq:energy-gap},
\[
  u_N\to u\ \text{in }H
  \iff u_N\to u\ \text{in }V
  \iff \operatorname{dist}_b(u,H_N)\to0 .
\]
Since $\operatorname{Ran}R=D(\Acal)$ is dense in $V$ and the $H_N$ increase, the last
condition holds for every $f\in H$ if and only if $\bigcup_NH_N$ is dense in
$V$ for $\norm\cdot_b$, i.e.\ is a form core for $\Acal$. Density in $H$
alone is insufficient: by \cite[Thm.~VIII.3.11]{Kato1995} the monotone limit
of the $a_N$ is the closure of $a|_{\bigcup_NH_N}$, which equals $a$ only
under form-norm density.\\
 For $u=\sum_ku_kh_k\in H^1_0(\Ocal)$
one has $\norm{\nabla(u-\Pi^Nu)}^2=\sum_{k>N}\lambda_ku_k^2\to0$, since
$\sum_k\lambda_ku_k^2=\norm{\nabla u}^2<\infty$ and $\Pi^N$ is orthogonal for
both $\normH\cdot$ and $\norm{\nabla\cdot}$; by \eqref{eq:V-equiv} this gives
$\Pi^Nu\to u$ in $\norm\cdot_b$. 
Since $\ip{\Acal_NU}{U}=\normV{U}^2$ for
$U\in H_N$, the space $\Vcal_N$ carries the norm inherited from $\Vcal$ and
is a closed subspace of it. Strong resolvent convergence of $\Acal_N\Pi^N$
therefore implies that of the skew-adjoint generators
$\Gcal_N=\begin{psmallmatrix}0&I\\-\Acal_N&0\end{psmallmatrix}$ to $\Gcal$ on
$\Vcal$, and the Trotter--Kato theorem
\cite[Thm.~IX.2.16]{Kato1995} yields the stated convergence of the unitary
groups, uniformly for $t$ in compact sets. 
For the bound apply the Heinz--Kato inequality
($0\le P\le Q$ implies $\abs{P^{\rho/2}U}\le\abs{Q^{\rho/2}U}$ for
$\rho\in[0,1]$) to $\theta\Lambda^{(N)}\le A^{(N)}\le(\theta+\beta
C_s)\Lambda^{(N)}$ from Lemma~\ref{lem:Mproperties}; uniformity in $N$ holds
because those constants are.
\end{proof}

\begin{proposition}[Strong convergence of the nonlinear Galerkin approximation]
\label{prop:strongconv}
Under the hypotheses of Theorem~\ref{thm:global-dd}, for every $T>0$
\begin{equation}\label{eq:ZNconv}
  \E\sup_{t\le T}\norm{Z^N(t)-Z(t)}_{\Vcal}^2\longrightarrow0,
  \qquad
  \sup_N\ \E\sup_{t\le T}\normLp{\partial_tz^N(t)}{p+2}^2<\infty,
\end{equation}
and consequently
\begin{equation}\label{eq:uNconv}
  \sup_{t\le T}\Bigl(\normV{u^N(t)-u(t)}+\normH{v^N(t)-v(t)}\Bigr)
  \longrightarrow0
  \qquad\PP\text{-a.s.\ and in }L^q(\Omega),\ q<\infty .
\end{equation}
\end{proposition}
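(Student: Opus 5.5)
The plan has two parts: first the linear stochastic convolutions, then the nonlinear scheme through the pathwise reduction of Lemma~\ref{lem:reduction} applied at the Galerkin level.

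\emph{Linear part.} I would use the group trick \eqref{eq:group-trick} for both convolutions, writing $Z^N(t)=S_N(t)Y^N(t)$ with $Y^N(t)=\int_0^tS_N(-r)\Pi^N\Gbb\,dW_Q(r)$, and split
\[
Z^N(t)-Z(t)=\int_0^t\bigl[S_N(t-r)\Pi^N-S(t-r)\bigr]\Gbb\,dW_Q(r).
\]
By the It\^o isometry the second moment at fixed $t$ equals $\int_0^t\sum_k\sigma^2\lambda_k^{-2\gamma}\norm{[S_N(t-r)\Pi^N-S(t-r)]\Gbb h_k}_{\Vcal}^2\,dr$. Each summand tends to $0$ uniformly in $r$ by Lemma~\ref{lem:trotter}. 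Each summand is also dominated by $4\sigma^2\lambda_k^{-2\gamma}$, which is summable since $\kappa<\infty$, so dominated convergence gives the limit. To control the supremum in time I cannot apply BDG directly, because the integrand depends on $t$. Instead I would factor $S_N(t)Y^N(t)-S(t)Y(t)=S_N(t)(Y^N(t)-\Pi^NY(t))+(S_N(t)\Pi^N-S(t))Y(t)$. The first term has norm $\norm{Y^N(t)-\Pi^NY(t)}$, a martingale difference with integrand $[S_N(-r)\Pi^N-\Pi^NS(-r)]\Gbb$, so Doob/BDG plus the same dominated-convergence argument handles it. For the second term, $\sup_{t\le T}\norm{(S_N(t)\Pi^N-S(t))Y(t)}$ tends to $0$ a.s.: on the compact set $\{Y(t):t\le T\}$, the operators $S_N(t)\Pi^N$ are equicontinuous and converge strongly, uniformly in $t$. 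It is bounded by $2\sup\norm{Y}$, so dominated convergence in $\Omega$ applies. The uniform $L^{p+2}$ bound on $\partial_tz^N$ follows by repeating Lemma~\ref{lem:z}(iii) in $\Vcal_\delta^N$, using \eqref{eq:heinzN}, which is uniform in $N$, and the Sobolev embedding $H^\delta_0\hookrightarrow L^{p+2}$. The Hilbert--Schmidt norm is then bounded by $C\sum\lambda_k^{\delta-2\gamma}$ independently of $N$.

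\emph{Nonlinear part.} Set $y^N:=u^N-z^N$, which solves the projected reduced equation $\partial_t^2y^N+\Acal_Ny^N=-\Pi^N(\abs{y^N+z^N}^p(y^N+z^N))$ with data $\Pi^N(U_0,V_0)$. Steps~2--3 of Theorem~\ref{thm:global-dd} carry over verbatim, with $\Pi^N$ self-adjoint and $\partial_ty^N\in H_N$. This gives the pathwise bound \eqref{eq:pathwise-gronwall} for $e^N$ with $\psi_N=\normLp{\partial_tz^N}{p+2}$. Using Lemma~\ref{lem:initial-energy} and a.s.\ convergence along a subsequence, this yields $\sup_N\sup_{t\le T}\norm{(y^N,\partial_ty^N)}_{\Vcal}\le K(\omega)<\infty$ a.s. Next I would estimate the difference in Duhamel form:
\[
(y^N-y,\partial_ty^N-\partial_ty)(t)=[S_N(t)\Pi^N-S(t)](U_0,V_0)-\int_0^t\Bigl(S_N(t-r)\Pi^N\Gbb N(y^N,z^N)-S(t-r)\Gbb N(y,z)\Bigr)dr.
\]
The integrand splits as $S_N\Pi^N\Gbb[N(y^N,z^N)-N(y,z)]+[S_N\Pi^N-S]\Gbb N(y,z)$. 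The first piece is bounded via \eqref{eq:Nlip-defoc}, with a constant depending on $K$, by $C_K(\norm{Y^N-Y}_{\Vcal}+\normV{z^N-z})$. The second tends to $0$ uniformly by Trotter--Kato, since $\{N(y,z)(r)\}$ is compact in $H$. Gronwall then gives $\sup_{t\le T}\norm{(y^N-y,\partial_t y^N-\partial_t y)}_{\Vcal}\to0$ a.s. Adding $Z^N-Z$ yields \eqref{eq:uNconv} a.s., at least along a subsequence. This can be upgraded to the full sequence by a subsequence-of-subsequence argument, applied to convergence in probability. Convergence in $L^q$ then follows from uniform integrability, because $\sup_N\E\sup_t\norm{(u^N,v^N)}^{2q'}$ is bounded using the Gronwall bound together with Fernique exponential moments for $\sup_t\psi_N$, uniformly in $N$.

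\emph{Main obstacle.} I expect the hardest step to be obtaining a.s.\ uniform-in-$N$ control, specifically exponential moments of $\sup_t\psi_N$ uniform in $N$, which the $L^q$ claim requires. My first approach would be Fernique with uniformly bounded covariance, i.e.\ a Gaussian concentration bound whose constant depends only on $\sup_N\E\sup\psi_N^2$. A second, related difficulty is that a.s.\ convergence of $Z^N$ is only known in $L^2$. I would handle it by working in probability and extracting subsequences, as above.
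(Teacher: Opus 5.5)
\textbf{Overall.} Your architecture matches the paper's: the group trick, the It\^o isometry with dominated convergence in $k$, the Galerkin version of the pathwise Gronwall bound, and a Duhamel difference split into a Lipschitz part and a Trotter--Kato part made uniform by compactness. Your Gronwall step is equivalent to the paper's continuation over intervals of length $h=(2L)^{-1}$. In the linear part you are more careful than the paper. Your splitting $S_N(t)\bigl(Y^N(t)-\Pi^NY(t)\bigr)+\bigl(S_N(t)\Pi^N-S(t)\bigr)Y(t)$ handles the time supremum correctly. The paper passes from $\E\sup_t\norm{Y^N-Y}_{\Vcal}^2\to0$ to the claim for $Z^N-Z$ only ``because $S_N$ and $S$ are isometries'', and never treats the second term.

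\textbf{The gap: a.s.\ convergence of the full sequence.} The subsequence-of-subsequence principle characterises convergence \emph{in probability}. It never yields $\PP$-a.s.\ convergence of the whole sequence. Your plan therefore proves \eqref{eq:uNconv} in probability, in $L^q(\Omega)$, and a.s.\ along subsequences. It does not prove the stated a.s.\ convergence of the full Galerkin sequence. For that you need a summable tail estimate. An example is a rate $\E\sup_{t\le T}\norm{Z^N(t)-Z(t)}_{\Vcal}^2\lesssim N^{-a}$, with a similar rate for $\partial_tz^N\to\partial_tz$ in $C([0,T];L^{p+2})$; this requires a quantitative error estimate for the dense scheme, which Trotter--Kato alone cannot give. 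Because these differences are Gaussian, every moment inherits the rate. Borel--Cantelli then gives a.s.\ convergence, and hence a.s.\ finiteness of $\sup_N\sup_t\psi_N$. After that your deterministic Gronwall argument runs for every $\omega$. The paper's proof does not supply this either. It places $\sup_t\norm{Z^N-Z}_{\Vcal}$ inside $\varepsilon_N$ and treats it as a pathwise null sequence. It also declares its random radius $R$ finite ``by the second half of \eqref{eq:ZNconv}''. That radius contains $\sup_N\sup_t\norm{Z^N}_{\Vcal}$ and, through \eqref{eq:pathwise-gronwall}, $\sup_N\int_0^T\psi_N$, yet \eqref{eq:ZNconv} is only an $L^2(\Omega)$ statement.

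\textbf{A second, repairable point.} The bound $\sup_N\sup_t\norm{(y^N,\partial_ty^N)}_{\Vcal}\le K(\omega)$ does not follow ``from a.s.\ convergence along a subsequence''. A.s.\ convergence of $Z^{N_k}$ in $C([0,T];\Vcal)$ gives no control of $\psi_{N_k}=\normLp{\partial_tz^{N_k}}{p+2}$. An $L^2(\Omega)$ bound does not by itself make any subsequence a.s.\ bounded.

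For convergence in probability you can avoid the issue by localising. Take the event
$B_{N,M}:=\{\sup_t\psi_N\le M,\ \sup_t\norm{Z^N}_{\Vcal}\le M,\ \sup_t\norm{(y,\partial_ty)}_{\Vcal}\le M,\ \norm{(U_0,V_0)}_{\Vcal}\le M\}$.
On $B_{N,M}$ the Gronwall bound gives a deterministic radius $K(M)$ and Lipschitz constant $L(M)$. There one has $\sup_t\norm{(y^N-y,\partial_ty^N-\partial_ty)}_{\Vcal}\le\bigl(\eta_N+L(M)T\sup_t\norm{Z^N-Z}_{\Vcal}\bigr)e^{L(M)T}$, where $\eta_N\to0$ pathwise is the Trotter--Kato error on the compact sets. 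Moreover $\sup_N\PP(B_{N,M}^c)\to0$ as $M\to\infty$ by Chebyshev and \eqref{eq:ZNconv}. Alternatively, use the slack in \eqref{eq:gamma-strong} to bound $\partial_tz^N$ uniformly in $D(\Acal_N^{\delta'/2})$ for some $\delta'\in(\delta,1]$, and interpolate with the convergence in $H$.

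\textbf{The uniform-integrability step.} Your Gaussian-concentration route to exponential moments of $\sup_t\psi_N$, uniform in $N$, is correct. It is exactly what this step needs. The paper instead appeals to the Fernique bound of Lemma~\ref{lem:z}, which concerns $Z$ alone and leaves the uniformity in $N$ implicit.
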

\begin{remark}
    The main difficulty of this theorem, is that the classical Laplacian eigenbasis diagonalizes neither the mixed operator $\mathcal{A}$ nor its fractional component. Thus the standard spectral Galerkin argument based on simultaneous diagonalization is unavailable. The proof instead relies on form-core convergence and strong resolvent convergence of $\mathcal{A}_N$, followed by a stability argument for the nonlinear equation.
\end{remark}
\begin{proof}
By the group trick
\eqref{eq:group-trick}, $Z^N(t)=S_N(t)Y^N(t)$ with
$Y^N(t)=\int_0^tS_N(-r)\Pi^N\Gbb\,dW_Q(r)$, and $Y^N-Y$ is a continuous
$\Vcal$-valued martingale, so by the It\^o isometry
\begin{equation}\label{eq:isom-diff}
  \E\norm{Y^N(t)-Y(t)}^2_{\Vcal}
  =\int_0^t g_N(r)\,dr,
  \qquad
  g_N(r):=\sum_{k\ge1}a^N_k(r),
\end{equation}
where
$a^N_k(r):=\bigl\|\bigl(S_N(-r)\Pi^N-S(-r)\bigr)\Gbb Q^{1/2}h_k
\bigr\|^2_{\Vcal}$.
 Since $Q^{1/2}h_k=\sigma\lambda_k^{-\gamma}h_k$
and $\Gbb g=(0,g)$ with $\norm{\Gbb g}_{\Vcal}=\normH g$, the vector
$\Gbb Q^{1/2}h_k=\sigma\lambda_k^{-\gamma}(0,h_k)$ lies in $\{0\}\times H$,
where $\Pi^N$ acts as the $H$-orthogonal projection; hence
$\norm{\Pi^N\Gbb Q^{1/2}h_k}_{\Vcal}\le\sigma\lambda_k^{-\gamma}$. As $S(-r)$
and $S_N(-r)$ are isometries of $\Vcal$ and $\Vcal_N$ respectively
(see the proof of Lemma~\ref{lem:trotter}), the triangle inequality gives
\begin{equation}\label{eq:dominating}
  a^N_k(r)\;\le\;\bigl(\sigma\lambda_k^{-\gamma}
  +\sigma\lambda_k^{-\gamma}\bigr)^2
  \;=\;4\sigma^2\lambda_k^{-2\gamma}
  \qquad\text{for all }N\in\N,\ r\in[0,T],
\end{equation}
a bound independent of both $N$ and $r$, 
and it is summable in $k$ with
$\sum_k4\sigma^2\lambda_k^{-2\gamma}=4\sigma^2\kappa<\infty$ by
Assumption~\ref{ass:standing}(i).\\
Fix $r$. For each $k$, Lemma~\ref{lem:trotter} applied to the fixed vector
$X=\Gbb Q^{1/2}h_k\in\Vcal$ gives $a^N_k(r)\to0$ as $N\to\infty$. By
\eqref{eq:dominating} the sequence $(a_k^N(r))_k$ is dominated by the
summable sequence $(4\sigma^2\lambda_k^{-2\gamma})_k$, uniformly in $N$;
dominated convergence for series therefore yields $g_N(r)\to0$ for every
$r\in[0,T]$.

 Summing
\eqref{eq:dominating} over $k$ gives $0\le g_N(r)\le4\sigma^2\kappa$ for all
$N$ and $r$, and the constant function $4\sigma^2\kappa$ is integrable on the
finite interval $[0,T]$. Since $g_N\to0$ pointwise by the previous step,
dominated convergence gives $\int_0^Tg_N(r)\,dr\to0$, i.e.
$\E\norm{Y^N(T)-Y(T)}^2_{\Vcal}\to0$ by \eqref{eq:isom-diff}. Doob's
inequality upgrades this to $\E\sup_{t\le T}\norm{Y^N-Y}^2_{\Vcal}\to0$, and
the first half of \eqref{eq:ZNconv} follows because $S_N$ and $S$ are
isometries.

The second half of \eqref{eq:ZNconv} is the computation of
Lemma~\ref{lem:z}(iii) performed on $H_N$, where \eqref{eq:heinzN} makes the
constants $N$-independent and
$\lvert(\Lambda^{(N)})^{\delta/2}\Pi^Nu\rvert\le\norm{(-\Delta)^{\delta/2}u}$.

 Write $u^N=y^N+z^N$, where $y^N$
solves the projected version of \eqref{eq:reduced-defoc}, and set
$Y:=(y,\partial_ty)$, $Y^N:=(y^N,\partial_ty^N)$. Fix $\omega$ outside a null
set and put
\[
  R:=1+\sup_N\sup_{t\le T}\norm{Y^N(t)}_{\Vcal}
   +\sup_{t\le T}\norm{Y(t)}_{\Vcal}
   +\sup_N\sup_{t\le T}\norm{Z^N(t)}_{\Vcal}
   +\sup_{t\le T}\norm{Z(t)}_{\Vcal},
\]
which is finite because \eqref{eq:pathwise-gronwall} holds for $y^N$ with a
constant independent of $N$, by the second half of \eqref{eq:ZNconv}. Let
$L=L(R)$ be the Lipschitz constant of $N(\cdot,\zeta)$ on the ball of radius
$3R$ furnished by \eqref{eq:Nlip-defoc}, and choose the step
\begin{equation}\label{eq:glue-step}
  h:=\min\bigl\{(2L)^{-1},\,T\bigr\},
  \qquad M:=\lceil T/h\rceil ,
\end{equation}
both independent of $N$ and of the starting time.

On an interval $[t_0,t_0+h]\subset[0,T]$, subtracting the two Duhamel
formulas gives, for $t$ in that interval,
\[
  Y^N(t)-Y(t)
  =S_N(t-t_0)\bigl[Y^N(t_0)-\Pi^NY(t_0)\bigr]
  +\Ecal_N(t)
  -\int_{t_0}^tS_N(t-r)\Pi^N\Gbb
   \bigl[N(y^N,z^N)-N(y,z)\bigr]dr,
\]
where $\Ecal_N$ collects the terms in which the group $S_N\Pi^N$ is replaced
by $S$, namely $\bigl(S_N(t-t_0)\Pi^N-S(t-t_0)\bigr)Y(t_0)$ and
$\int_{t_0}^t\bigl(S_N(t-r)\Pi^N-S(t-r)\bigr)\Gbb N(y,z)(r)\,dr$. Both
$\{Y(t_0):t_0\in[0,T]\}$ and $\{\Gbb N(y,z)(r):r\in[0,T]\}$ are compact
subsets of $\Vcal$, being continuous images of $[0,T]$; since the operators
$S_N(\tau)\Pi^N-S(\tau)$ are uniformly bounded and converge strongly,
uniformly for $\tau$ in compacts, the convergence is uniform on compact
subsets of $\Vcal$, so
\[
  \varepsilon_N:=\sup_{t_0\le T}\ \sup_{t\in[t_0,t_0+h]}
  \norm{\Ecal_N(t)}_{\Vcal}
  +\sup_{t\le T}\norm{Z^N(t)-Z(t)}_{\Vcal}
  \longrightarrow0 .
\]
Using $\norm{S_N(\cdot)}=1$, $\norm{\Gbb g}_{\Vcal}=\normH g$,
\eqref{eq:Nlip-defoc} and $hL\le\frac12$, we obtain
\[
  \sup_{[t_0,t_0+h]}\norm{Y^N-Y}_{\Vcal}
  \le\norm{Y^N(t_0)-Y(t_0)}_{\Vcal}+2\varepsilon_N
  +\tfrac12\sup_{[t_0,t_0+h]}\norm{Y^N-Y}_{\Vcal},
\]
that is, $\sup_{[t_0,t_0+h]}\norm{Y^N-Y}_{\Vcal}
\le2\norm{Y^N(t_0)-Y(t_0)}_{\Vcal}+4\varepsilon_N$. Applying this on the
consecutive intervals $[jh,(j+1)h]$, $j=0,\dots,M-1$, and iterating,
\begin{equation}\label{eq:glue}
  \sup_{t\le T}\norm{Y^N(t)-Y(t)}_{\Vcal}
  \le2^{M}\norm{\Pi^N(U_0,V_0)-(U_0,V_0)}_{\Vcal}+2^{M+2}\varepsilon_N
  \longrightarrow0 ,
\end{equation}
since $M$ depends only on $L(R)$ and $T$, not on $N$. (we are following the continuation-by-finitely-many-steps argument for semilinear evolution
equations, see \cite[Thm.~6.1.4]{Pazy1983} or
\cite[Ch.~4]{CazenaveHaraux1998}). Together with the first half of
\eqref{eq:ZNconv} and $u^N=y^N+z^N$ this proves the almost sure convergence
in \eqref{eq:uNconv}. Uniform integrability, hence $L^q(\Omega)$
convergence, follows from \eqref{eq:pathwise-gronwall} and the Fernique
bound of Lemma~\ref{lem:z}.
\end{proof}

\begin{theorem}[It\^o energy identity]\label{thm:energy-identity}
Under the hypotheses of Theorem~\ref{thm:global-dd}, the unique solution
satisfies, $\PP$-a.s.\ for all $t\ge0$,
\begin{equation}\label{eq:energy-identity-fixed}
  \Ecal(t)=\Ecal(0)+\frac{\kappa}{2}\,t
   +\sigma\sum_{k\ge1}\lambda_k^{-\gamma}\int_0^t\ip{v(r)}{h_k}\,dw_k(r),
  \qquad
  \Ecal:=\tfrac12\normH{v}^2+\tfrac12\normV{u}^2
  +\tfrac1{p+2}\normLp{u}{p+2}^{p+2}.
\end{equation}
In particular $t\mapsto\Ecal(t)$ is a.s.\ continuous, and
$u\in C([0,T];V)$, $v\in C([0,T];H)$ a.s.
\end{theorem}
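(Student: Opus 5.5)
We derive the identity by passing to the limit $N\to\infty$ in the finite-dimensional It\^o formula \eqref{eq:ito-energy}. Lemma~\ref{lem:galerkin} shows that $\tau^N_\infty=\infty$, so for each $N$ and all $t\ge0$, a.s.,
\[
\Ecal^N(t)=\Ecal^N(0)+\tfrac{\kappa_N}{2}\,t+\sigma\sum_{k\le N}\lambda_k^{-\gamma}\int_0^t v^N_k(r)\,dw_k(r),
\]
where $v^N_k=\ip{v^N}{h_k}$. We then take each term to its limit in turn.

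For the left-hand side, note first that $\ip{A^{(N)}U^N}{U^N}=\normV{u^N}^2$ by Lemma~\ref{lem:Mproperties}, because $\mathcal M_s(u^N,u^N)=[u^N]^2_{H^s}$. Proposition~\ref{prop:strongconv} gives $\sup_{t\le T}(\normV{u^N-u}+\normH{v^N-v})\to0$ a.s. and in every $L^q(\Omega)$. Hence $\sup_{t\le T}|\normV{u^N}^2-\normV{u}^2|\to0$ and $\sup_{t\le T}|\normH{v^N}^2-\normH{v}^2|\to0$ a.s., since the sequences are uniformly bounded on $[0,T]$ for fixed $\omega$. For the potential term, the embedding $V\hookrightarrow L^{p+2}$ and the elementary inequality $\bigl||a|^{p+2}-|b|^{p+2}\bigr|\le(p+2)(|a|+|b|)^{p+1}|a-b|$, combined with H\"older, give $\sup_{t\le T}\bigl|\normLp{u^N}{p+2}^{p+2}-\normLp{u}{p+2}^{p+2}\bigr|\to0$ a.s. Thus $\Ecal^N\to\Ecal$ uniformly on $[0,T]$ a.s. The same argument applies at $t=0$, and is also covered by Lemma~\ref{lem:initial-energy}. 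The drift term is immediate: $\kappa_N\uparrow\kappa$.

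The main step is the stochastic integral. Write $M^N(t):=\sigma\sum_{k\le N}\lambda_k^{-\gamma}\int_0^t\ip{v^N}{h_k}\,dw_k$ and $M(t):=\sigma\sum_{k\ge1}\lambda_k^{-\gamma}\int_0^t\ip{v}{h_k}\,dw_k$. The latter equals $\int_0^t\ip{v(r)}{dW_Q(r)}$, which is well defined because $v$ is adapted and $\E\int_0^T\normH{v}^2\,dr<\infty$ by Theorem~\ref{thm:global-dd}. Its quadratic variation is bounded by $\kappa\int_0^t\normH{v}^2$. By Doob's inequality,
\[
\E\sup_{t\le T}|M^N-M|^2\le 4\sigma^2\,\E\int_0^T\Bigl(\sum_{k\le N}\lambda_k^{-2\gamma}\ip{v^N-v}{h_k}^2+\sum_{k>N}\lambda_k^{-2\gamma}\ip{v}{h_k}^2\Bigr)dr .
\]
The first sum is at most $\lambda_1^{-2\gamma}\normH{v^N-v}^2$, and its expectation tends to $0$ by the $L^2(\Omega)$ convergence in \eqref{eq:uNconv}. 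The second is dominated by $\lambda_1^{-2\gamma}\normH{v}^2$, which is integrable, and it tends to $0$ pointwise, so dominated convergence disposes of it. Note that the $L^q$ convergence, and not merely a.s.\ convergence, is what makes this step work. Passing to a subsequence, $\sup_{t\le T}|M^N-M|\to0$ a.s.

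Combining the three limits gives \eqref{eq:energy-identity-fixed} for all $t\le T$ on a full-measure set. Taking $T\in\N$ extends this to all $t\ge0$ by a countable intersection. Continuity of $\Ecal$ follows because it is a uniform limit of continuous functions, or equally from the right-hand side. The regularity $u\in C([0,T];V)$, $v\in C([0,T];H)$ is already contained in \eqref{eq:global-class}.
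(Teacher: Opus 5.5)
Your proposal is correct and follows the paper's proof: you pass to the limit in the Galerkin It\^o formula of Lemma~\ref{lem:galerkin}, use Proposition~\ref{prop:strongconv} for the energy terms, and control the martingale part by the same two-term $L^2(\Omega)$ splitting. The differences are cosmetic: you get uniform-in-time convergence of the martingales from Doob's inequality plus a subsequence, whereas the paper argues at fixed $t$ and then uses continuity of both sides, and you read off $u\in C([0,T];V)$, $v\in C([0,T];H)$ directly from \eqref{eq:global-class} instead of the paper's weak-to-strong upgrade.
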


\begin{proof}
Lemma~\ref{lem:galerkin} (It\^o's formula in $\R^{2N}$) gives, for every
$N$,
\[
  \Ecal^N(t)=\Ecal^N(0)+\frac{\kappa_N}{2}t
   +\sigma\sum_{k\le N}\lambda_k^{-\gamma}\int_0^t v^N_k(r)\,dw_k(r),
   \qquad
   \kappa_N=\sigma^2\!\!\sum_{k\le N}\!\lambda_k^{-2\gamma}\uparrow\kappa .
\]
By Proposition~\ref{prop:strongconv} the left-hand side converges to
$\Ecal(t)$ uniformly on $[0,T]$, a.s.\ and in $L^1(\Omega)$: indeed
$\normV{u^N}\to\normV u$ and $\normH{v^N}\to\normH v$ by \eqref{eq:uNconv},
and $\normLp{u^N}{p+2}\to\normLp{u}{p+2}$ by
$V\hookrightarrow L^{p+2}$. Also $\Ecal^N(0)\to\Ecal(0)$ by
Lemma~\ref{lem:initial-energy}. For the martingale term,
\[
  \E\Bigl\lvert\sum_{k\le N}\lambda_k^{-\gamma}\!\int_0^t\!
  \bigl(v^N_k-\ip{v}{h_k}\bigr)dw_k
   -\!\!\sum_{k>N}\lambda_k^{-\gamma}\!\int_0^t\!\ip{v}{h_k}dw_k
   \Bigr\rvert^2
  \le \lambda_1^{-2\gamma}\E\!\int_0^t\!\normH{v^N-v}^2
   +\!\sum_{k>N}\!\lambda_k^{-2\gamma}\E\!\int_0^t\!\normH{v}^2,
\]
and both terms vanish as $N\to\infty$ by \eqref{eq:uNconv} and
$\sum_k\lambda_k^{-2\gamma}<\infty$. Passing to the limit gives
\eqref{eq:energy-identity-fixed} for each fixed $t$, and for all $t$
simultaneously by continuity of both sides. Continuity of $\Ecal$ together
with the a.s.\ weak continuity of $u$ in $V$ and $v$ in $H$ upgrades weak to
strong continuity by the standard norm-plus-weak argument.
\end{proof}

\subsection{Pathwise uniqueness}

\begin{proposition}\label{prop:uniqueness}
Let Assumption~\ref{ass:standing} hold and assume in addition
\begin{equation}\label{eq:uniqueness-range}
0<p\le\frac{2}{d-2}\quad (d\ge3),\qquad
\text{no restriction if } d\le 2 .
\end{equation}
If $(u_1,v_1)$ and $(u_2,v_2)$ are energy solutions on $[0,T]$ with the same
initial data, then
$\PP\bigl(u_1(t)=u_2(t)\ \forall t\in[0,T]\bigr)=1$.
\end{proposition}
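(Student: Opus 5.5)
Since the noise is additive, the difference $w:=u_1-u_2$ carries no stochastic term: subtracting the two copies of \eqref{eq:weak1}--\eqref{eq:weak2}, the terms $\sigma\sum_k\lambda_k^{-\gamma}\ip{h_k}{\varphi}w_k(t)$ cancel identically. For $\PP$-a.e.\ fixed $\omega$, $w$ is therefore a weak solution of the deterministic linear wave equation
\[
\partial_t^2 w+\Acal w=-\bigl(\abs{u_1}^pu_1-\abs{u_2}^pu_2\bigr)=:-G,\qquad w(0)=\partial_tw(0)=0,
\]
with $w\in C([0,T];V)$ and $\partial_tw=v_1-v_2\in C([0,T];H)$. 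The plan is to run a pathwise energy estimate for $w$ and close it with Gronwall. No expectation is taken, so uniqueness holds $\omega$ by $\omega$, and hence with probability one.

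\textbf{Energy identity.} First check that $G\in C([0,T];H)$. Under \eqref{eq:uniqueness-range} we have $V\hookrightarrow L^{2(p+1)}(\Ocal)$ with constant $C_S$, and the estimate \eqref{eq:Nlip-defoc} (with $\zeta=0$) gives
\[
\normH{G(t)}\le(p+1)C_S^{p+1}\bigl(\normV{u_1(t)}+\normV{u_2(t)}\bigr)^p\normV{w(t)}.
\]
Consequently $\partial_t^2w=-\Acal w-G\in C([0,T];V^*)$. The same time-mollification argument used in Step~2 of Theorem~\ref{thm:global-dd} (Lions--Magenes) then justifies testing with $\partial_tw$. This yields, for $\phi(t):=\tfrac12\normH{\partial_tw}^2+\tfrac12\normV{w}^2$,
\[
\phi(t)=-\int_0^t\ip{G(r)}{\partial_tw(r)}\,dr .
\]

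\textbf{Gronwall step.} Set $K(\omega):=\sup_{t\le T}\bigl(\normV{u_1}+\normV{u_2}\bigr)$, which is finite a.s.\ because $u_i\in C([0,T];V)$. Cauchy--Schwarz and the bound on $G$ give
\[
\phi(t)\le(p+1)C_S^{p+1}K^p\int_0^t\normV{w}\,\normH{\partial_tw}\,dr\le C(\omega)\int_0^t\phi(r)\,dr .
\]
Gronwall then gives $\phi\equiv0$ on $[0,T]$, so $u_1(t)=u_2(t)$ for all $t$ simultaneously on a full-measure set; the $v_i$ coincide as well. One could alternatively pass through the reduction of Lemma~\ref{lem:reduction} with the same $z$ and invoke uniqueness of the fixed point from Step~1 of Theorem~\ref{thm:global-dd}, but that route needs the contraction on short intervals to be glued together, and the direct Gronwall argument avoids this.

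\textbf{Main obstacle.} The one delicate step is the rigorous justification of the energy identity for $w$. The solutions are only energy solutions, so $\ip{\partial_t^2w}{\partial_tw}$ is a $V^*$--$H$ pairing that is not a priori defined. I would handle this by mollifying $w$ in time, or equivalently by passing through the Duhamel form and the unitary group $S(t)$. The key input is $G\in L^2(0,T;H)$, which is exactly where the energy-subcritical restriction $p\le 2/(d-2)$ enters, through $V\hookrightarrow L^{2(p+1)}$. Above this range $G$ would only lie in $V^*$ and this argument would fail.
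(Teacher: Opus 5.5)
Your proposal is correct and follows essentially the same route as the paper. Both arguments let the additive noise cancel in the difference, test the deterministic equation for $w$ with $\partial_t w$ (justified by Lions--Magenes mollification), bound the nonlinear difference in $H$, and close with Gronwall from zero data. The only difference is cosmetic: you split H\"older as $L^{2(p+1)}\times L^{2(p+1)}$ via \eqref{eq:Nlip-defoc}, whereas the paper uses $L^{pd}\times L^{2^*}$. Both splits require exactly $p\le 2/(d-2)$, and yours also covers $d\le 2$ without modification.
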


\begin{proof}
Set $w:=u_1-u_2$ and $z:=v_1-v_2$. Because the noise is \emph{additive}, it
cancels in the difference: $(w,z)$ satisfies, $\PP$-a.s., the
\emph{deterministic} system $\partial_tw=z$,
$\partial_tz+\Acal w=-\bigl(\abs{u_1}^pu_1-\abs{u_2}^pu_2\bigr)$ with
$w(0)=z(0)=0$. Testing with $z$ (which is admissible after a standard
mollification in time, cf.\ \cite[Ch.~3]{LionsMagenes1972}) and setting
$e(t):=\frac12\norm{z(t)}^2+\frac12\normV{w(t)}^2$,
\[
e'(t)=-\ip{\abs{u_1}^pu_1-\abs{u_2}^pu_2}{z}
\;\le\;\norm{\abs{u_1}^pu_1-\abs{u_2}^pu_2}\,\norm{z} .
\]
By the elementary inequality
$\bigl|\abs a^pa-\abs b^pb\bigr|\le(p+1)(\abs a+\abs b)^p\abs{a-b}$ and
H\"older with $\frac12=\frac pr+\frac1q$,
\[
\norm{\abs{u_1}^pu_1-\abs{u_2}^pu_2}
\le (p+1)\,\normLp{\abs{u_1}+\abs{u_2}}{r}^{p}\,\normLp{w}{q}.
\]
Choose $q=2^{*}=\frac{2d}{d-2}$; then
$\frac pr=\frac12-\frac{d-2}{2d}=\frac1d$, i.e.\ $r=pd$, and $r\le2^{*}$
exactly when $p\le\frac{2}{d-2}$, which is \eqref{eq:uniqueness-range}.
Hence by Sobolev
\[
e'(t)\le C\bigl(\normV{u_1}^p+\normV{u_2}^p\bigr)\normV{w}\norm{z}
\le K(\omega)\,e(t),\qquad
K(\omega):=C\sup_{t\le T}\bigl(\normV{u_1}^p+\normV{u_2}^p\bigr)
<\infty\ \text{a.s.}
\]
Gronwall and $e(0)=0$ give $e\equiv0$.
\end{proof}

\begin{corollary}[Global well-posedness]\label{cor:gwp}
Under Assumption~\ref{ass:standing}, \eqref{eq:local-range} and
\eqref{eq:gamma-strong}, problem \eqref{stochastic_problem} has a unique
global energy solution, and the whole Galerkin sequence  converges to it in the sense of
Proposition~\ref{prop:strongconv}.
\end{corollary}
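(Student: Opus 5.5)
Corollary~\ref{cor:gwp} follows by assembling results already established; no new estimate is needed. Under Assumption~\ref{ass:standing}, \eqref{eq:local-range} and \eqref{eq:gamma-strong}, the hypotheses of Theorem~\ref{thm:global-dd} hold. That theorem provides a global, $\{\mathcal F_t\}$-adapted, probabilistically strong energy solution with $(u,\partial_tu)\in C([0,\infty);\Vcal)$ almost surely. It is constructed through the pathwise reduction of Lemma~\ref{lem:reduction} and the fixed-point and continuation argument.

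To verify that it is an energy solution in the sense of Definition~\ref{def:solution} on each $[0,T]$, I check the moment condition \eqref{eq:sol-class}. This is the $q=1$ case of the moment statement in Theorem~\ref{thm:global-dd}. It requires $\E[1+e(0)]<\infty$, and since $e(0)=\tfrac12\normH{V_0}^2+\tfrac12\normV{U_0}^2+\tfrac1{p+2}\normLp{U_0}{p+2}^{p+2}$, this is exactly Assumption~\ref{ass:standing}(iii) together with \eqref{eq:V-equiv}. The weak formulation \eqref{eq:weak1}--\eqref{eq:weak2} follows from the Duhamel form \eqref{eq:duhamel-defoc} via Lemma~\ref{lem:reduction}.

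For uniqueness, the range \eqref{eq:local-range} coincides with \eqref{eq:uniqueness-range}. Proposition~\ref{prop:uniqueness} therefore shows that any two energy solutions with the same data agree on $[0,T]$ almost surely, for every $T$. Intersecting over $T\in\N$ gives indistinguishability on $[0,\infty)$, and the velocity components agree as well because $v=\partial_tu$ by \eqref{eq:weak1}.

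For the Galerkin statement, Proposition~\ref{prop:strongconv} gives convergence \eqref{eq:uNconv} of the full sequence $(u^N,v^N)$, not merely a subsequence, to the solution of Theorem~\ref{thm:global-dd}; by uniqueness this is \emph{the} solution. The only point requiring care is a potential mismatch in the moment hypotheses. The $L^q(\Omega)$ part of \eqref{eq:uNconv} relies on uniform integrability derived from \eqref{eq:pathwise-gronwall} and Fernique. The plan is therefore to state the almost sure convergence unconditionally and to state $L^q$ convergence under $\E(1+e(0))^{q'}<\infty$ for some $q'>q/2$, which follows from Hölder applied to the exponential moment of $\int_0^T\psi$. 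I expect this bookkeeping to be the only mild obstacle.
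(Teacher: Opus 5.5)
Your argument follows the paper's route. The paper gives Corollary~\ref{cor:gwp} no separate proof: it is read off from Theorem~\ref{thm:global-dd}, Proposition~\ref{prop:uniqueness} (whose range \eqref{eq:uniqueness-range} is \eqref{eq:local-range}) and Proposition~\ref{prop:strongconv}, with uniqueness identifying the Galerkin limit, exactly as you do. Your existence, moment ($q=1$) and uniqueness steps are fine.

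Your caveat on the $L^q(\Omega)$ clause is justified, and it is more than bookkeeping: for random data the clause is false, and the corollary inherits it from Proposition~\ref{prop:strongconv}. Take $V_0=0$ and $U_0=\xi\varphi$, with $\varphi\in V\setminus\bigcup_N H_N$ and $\xi$ $\mathcal F_0$-measurable, $\E\abs{\xi}^{p+2}<\infty$ but $\E\abs{\xi}^{q}=\infty$ for some $q>p+2$. Then Assumption~\ref{ass:standing}(iii) holds, yet already at $t=0$
\[
\E\sup_{t\le T}\normV{u^N(t)-u(t)}^q\;\ge\;\normV{\Pi^N\varphi-\varphi}^q\,\E\abs{\xi}^q=\infty\qquad\text{for every }N .
\]

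Your repair, however, puts the moment condition on the wrong quantity. Uniform integrability has to come from the Galerkin analogue of \eqref{eq:pathwise-gronwall}, which starts from $e^N(0)$. Its term $\tfrac1{p+2}\normLp{\Pi^NU_0}{p+2}^{p+2}$ is in general controlled only by $C\norm{\nabla U_0}^{p+2}$ (via $V\hookrightarrow L^{p+2}$), not by $e(0)$, because for $d\ge2$ the spectral projections $\Pi^N$ are in general not uniformly bounded on $L^{p+2}$. For $d=1$ they are, by M.~Riesz, and your condition would suffice.

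A workable hypothesis is $\E\bigl(1+\normH{V_0}^2+\norm{\nabla U_0}^{p+2}\bigr)^{q'}<\infty$ for some $q'>q/2$, which is automatic for deterministic data. You also need $N$-uniform exponential moments of $\sup_{t\le T}\normLp{\partial_tz^N(t)}{p+2}$. Fernique's theorem supplies these with constants depending only on the uniform bound in the second half of \eqref{eq:ZNconv}.

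The same care is needed for your ``a.s.\ unconditionally''. The proof of Proposition~\ref{prop:strongconv} uses a pathwise radius containing $\sup_N\sup_t\norm{Z^N(t)}_{\Vcal}$ and $\sup_N\int_0^T\normLp{\partial_tz^N}{p+2}$. It also needs $\sup_{t\le T}\norm{Z^N(t)-Z(t)}_{\Vcal}\to0$ pathwise. Only $L^2(\Omega)$ and expectation bounds are actually proved for these. Without a convergence rate plus Borel--Cantelli, the argument yields convergence in probability for the whole sequence, hence a.s.\ convergence only along subsequences.
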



\section{Malliavin Regularity and Existence of Density} \label{sec:malliavin}

In this section, we study the Malliavin regularity of the solution for the defocusing case ($\varepsilon = +1$). Our main goal is to prove that the solution $u(t, x)$ is Malliavin differentiable and that, under suitable spatial regularity conditions, its probability law evaluated at a specific point admits a density with respect to the Lebesgue measure on $\R$. We rely on the standard framework of Malliavin calculus for stochastic partial differential equations (see~\cite{SanzSole2005} for an application of Malliavin calculus to SPDEs and~\cite{nualart2006malliavin} for a comprehensive study of Malliavin calculus).

Let $\mathbb{D}^{1,2}(H)$ denote the standard Malliavin Sobolev space of random variables with values in the pivot space $H = L^2(\Ocal)$. We denote the Malliavin derivative with respect to the driving noise $W_Q$ at time $r$ and in the direction of the eigenfunction $h_k$ as $D_{r,k}$ \cite{SanzSole2005}.

\subsection{The Variational Equation and Malliavin Differentiability}
Applying the Malliavin derivative operator $D_{r,k}$ formally to the integral representation of our system, we obtain the following linear stochastic evolution equation with random coefficients. For $t \ge r$, the derivative $D_{r,k} u(t)$ satisfies:
\begin{align}
D_{r,k} u(t) &= \int_r^t D_{r,k} v(s) \, ds, \label{eq:mal_u} \\
D_{r,k} v(t) &= \sigma \lambda_k^{-\gamma} h_k - \int_r^t \left[ \Acal (D_{r,k} u(s)) + f'(u(s)) D_{r,k} u(s) \right] \, ds, \label{eq:mal_v}
\end{align}
where $f'(u) = (p+1)\abs{u}^p$. For $t < r$, we set $D_{r,k} u(t) = 0$ and $D_{r,k} v(t) = 0$.

\begin{theorem} \label{thm:malliavin_diff}
Let the conditions of the defocusing case (Assumption~\ref{ass:standing}) hold. Then, for any $t > 0$, the solution $(u(t), v(t))$ belongs to $\mathbb{D}^{1,2}(V) \times \mathbb{D}^{1,2}(H)$.
\end{theorem}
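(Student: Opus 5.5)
We will establish Malliavin differentiability through the Galerkin scheme and then pass to the limit using the closability of $D$. This is the standard criterion from \cite[Lemma~1.2.3]{nualart2006malliavin}: if $F_N\to F$ in $L^2(\Omega;E)$ and $\sup_N\E\norm{DF_N}^2_{L^2([0,T]\times\N;E)}<\infty$, then $F\in\mathbb{D}^{1,2}(E)$.

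\emph{Step 1 (finite dimensions).} For fixed $N$, the system \eqref{eq:galerkin-system} is an SDE in $\R^{2N}$ with additive noise and a $C^1$ drift, since $F^{(N)}=\nabla\Phi$ and $\Phi\in C^2$ by Lemma~\ref{lem:Fproperties}(a). Its drift is locally Lipschitz, and the solution is global by Lemma~\ref{lem:galerkin}. Because the noise is additive, we can use the pathwise reduction of Lemma~\ref{lem:reduction} at level $N$: $X^N=\Theta_N(X^N_0,Z^N)$, where $\Theta_N$ is the $C^1$ solution map of a random ODE and $Z^N$ is a linear, hence smooth in the Malliavin sense, functional of the noise. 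The chain rule then gives $(u^N(t),v^N(t))\in\mathbb{D}^{1,2}$. An alternative route is \cite[Thm.~2.2.1]{nualart2006malliavin} after localization by $\tau_R$. Either way, $D_{r,k}X^N$ solves the projected analogue of \eqref{eq:mal_u}--\eqref{eq:mal_v}: it is the homogeneous linearized equation, started at time $r$ from $(0,\sigma\lambda_k^{-\gamma}\Pi^Nh_k)$ for $k\le N$, and it vanishes for $k>N$.

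\emph{Step 2 (uniform bound).} Write $\eta:=D_{r,k}u^N$. The energy $e_\eta(t)=\frac12\abs{\partial_t\eta}^2+\frac12\ip{A^{(N)}\eta}{\eta}$ has derivative
\[
e_\eta'=-\ip{(p+1)\abs{u^N}^p\eta}{\partial_t\eta}.
\]
By H\"older, with $V\hookrightarrow L^{2(p+1)}$ on the range \eqref{eq:local-range} (exponents $\frac{p+1}{p}$, $p+1$ as in \eqref{eq:Nlip-defoc}),
\[
\abs{e_\eta'}\le C\normV{u^N}^p\normV{\eta}\abs{\partial_t\eta}\le C\normV{u^N}^p e_\eta .
\]
Gronwall then gives
\[
e_\eta(t)\le \tfrac12\sigma^2\lambda_k^{-2\gamma}\exp\Bigl(C\int_r^t\normV{u^N(s)}^p\,ds\Bigr).
\]
Here the constants are $N$-independent by \eqref{eq:A-bounds}. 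Summing over $k$ and integrating in $r\in[0,t]$ yields
\[
\E\norm{DX^N(t)}^2\le t\,\kappa\,\E\exp\Bigl(Ct\sup_{s\le t}\normV{u^N(s)}^p\Bigr).
\]

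\emph{Step 3 (main obstacle: an exponential moment uniform in $N$).} The pathwise Gronwall bound \eqref{eq:pathwise-gronwall}, which holds uniformly in $N$ as used in the proof of Proposition~\ref{prop:strongconv}, gives
\[
\sup_{s\le t}\normV{u^N}^2\lesssim (1+e(0))e^{C\int\psi}+\sup\norm{Z^N}^2_{\Vcal}.
\]
Composing exponentials is dangerous, because a double exponential of a Gaussian functional is generally not integrable. The first remedy is localization. Let
\[
\Omega_M=\Bigl\{\sup_N\sup_{s\le t}\normV{u^N}\le M\Bigr\},
\]
and apply the local criterion $\mathbb{D}^{1,2}_{loc}$ of \cite[§1.3.5]{nualart2006malliavin} via the cutoffs $\tau_M$. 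This yields membership in $\mathbb{D}^{1,2}_{\mathrm{loc}}$ unconditionally. To obtain genuine $\mathbb{D}^{1,2}$, we need moments of $\exp(Ct\sup\normV{u}^p)$. These follow if $p\le 2$ and the data are bounded, by combining Fernique for $\psi$ with the fact that $e$ grows at most like $(1+e(0))e^{C\int\psi}$ in a way that can be sharpened. The sharpening comes from integrating $e'\le C e^{(p+1)/(p+2)}\psi$, which gives
\[
e(t)^{1/(p+2)}\le e(0)^{1/(p+2)}+C\int_0^t\psi ,
\]
so that $\sup\normV{u}^p\lesssim(1+\int\psi)^{p(p+2)/2}$ is polynomial in $\psi$. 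This step, namely identifying the polynomial-in-$\psi$ bound and checking integrability of its exponential (for example, when $p(p+2)/2\le 2$, Fernique applies directly, and otherwise we fall back on localization), is where we expect the real difficulty to lie.

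\emph{Step 4 (limit).} We have $X^N(t)\to X(t)$ in $L^2(\Omega;\Vcal)$ by Proposition~\ref{prop:strongconv}, and the derivatives are bounded uniformly in $N$. Closability of $D$ then gives $(u(t),v(t))\in\mathbb{D}^{1,2}(V)\times\mathbb{D}^{1,2}(H)$. The weak limit of $DX^N$ solves \eqref{eq:mal_u}--\eqref{eq:mal_v}, and its uniqueness follows from the same Gronwall argument as in Step 2.
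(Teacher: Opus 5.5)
\textbf{The gap is the one you flag in Step~3, and it is real.} Your architecture (Galerkin derivatives, an $N$-uniform bound, closability) is sound, and your Step~2 computation is correct. But everything rests on $\sup_N\E\exp\bigl(Ct\sup_{s\le t}\normV{u^N(s)}^p\bigr)<\infty$, and you never prove it.

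\begin{itemize}
\item In general you only obtain $\mathbb{D}^{1,2}_{\mathrm{loc}}$.
\item You obtain $\mathbb{D}^{1,2}$ only for bounded data and $p(p+2)/2<2$. The inequality must be strict, or $t$ small, because at equality Fernique gives only exponential moments of small order.
\item The theorem asserts $\mathbb{D}^{1,2}$ for every admissible $p$, with data having only the moments of Assumption~\ref{ass:standing}(iii).
\item For the localised statement, truncate the nonlinearity, e.g.\ $\chi_M(\normV{u})\abs{u}^pu$, instead of stopping at $\tau_M$. Stopped random variables are not Malliavin differentiable in general.
\end{itemize}

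\textbf{You cannot close this by importing the paper's argument.} The paper avoids all moment issues by claiming $\frac{d}{dt}\Ecal_{D,k}(t,r)=-\int_{\Ocal}f'(u)\abs{D_{r,k}u}^2\,dx\le0$. From this it gets $\Ecal_{D,k}(t,r)\le\frac12\sigma^2\lambda_k^{-2\gamma}$ and $\E\sum_k\Ecal_{D,k}\le\kappa/2$. However, the line just before that in its own computation equals $-\ip{f'(u)D_{r,k}u}{D_{r,k}v}$. This is exactly your $e_\eta'$, and it has no sign. A sign-definite term would need a damping $f'(u)\,\partial_tD_{r,k}u$. A nonnegative but time-dependent potential $f'(u(t))$ can pump energy into the linearised wave equation (parametric resonance), so no monotonicity is available. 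Your Gronwall factor $\exp\bigl(C\int_r^t\normV{u(s)}^p\,ds\bigr)$ is the honest bound. The paper's proof also derives the variational equation only formally and never shows that $Du$ exists, so it leaves the same gap open. Your Steps~1 and~4 are what a rigorous version needs.

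\textbf{You can shrink the gap by using the It\^o identity instead of the pathwise bound.}
\begin{enumerate}
\item We have $d\Ecal^N=\frac{\kappa_N}{2}\,dt+dm$ with $d\langle m\rangle\le C_1\Ecal^N\,dt$, where $C_1=2\sigma^2\lambda_1^{-2\gamma}$ is independent of $N$.
\item Write $\epsilon m=\log\Gamma+\frac{\epsilon^2}{2}\langle m\rangle$, where $\Gamma:=\exp(\epsilon m-\frac{\epsilon^2}{2}\langle m\rangle)$ is a positive supermartingale. Gronwall then gives $\epsilon e^{-\epsilon C_1T/2}\sup_{t\le T}\Ecal^N(t)\le\epsilon\Ecal^N(0)+\frac{\epsilon\kappa T}{2}+\log\sup_{s\le T}\Gamma_s$.
\item Since $\E(\sup_s\Gamma_s)^q\le(1-q)^{-1}$ for $q<1$, this yields $\sup_N\E\exp\bigl(c\sup_{t\le T}\Ecal^N(t)\bigr)<\infty$ for some small $c>0$. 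This requires the initial energy to have a matching exponential moment (e.g.\ bounded data).
\item For $p<2$, Young's inequality gives $\normV{u^N}^p\le\delta\normV{u^N}^2+C_\delta\le2\delta\Ecal^N+C_\delta$. With $\delta$ small, this is exactly the uniform exponential bound your Step~2 needs.
\end{enumerate}
This covers every energy-subcritical $p$ when $d\ge4$, and $p<2$ when $d\le3$. For $p\ge2$, and for data with only polynomial moments, a further idea is still required.

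For the application in Theorem~\ref{thm:density} this matters less. The one-dimensional Bouleau--Hirsch criterion only needs $\mathbb{D}^{1,1}_{\mathrm{loc}}$, which your localised argument provides.
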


\begin{proof}
We introduce the energy functional for the Malliavin derivative in the direction $k$:
\begin{equation}
\Ecal_{D,k}(t, r) = \frac{1}{2} \normH{D_{r,k} v(t)}^2 + \frac{1}{2} \normV{D_{r,k} u(t)}^2 = \frac{1}{2} \normH{D_{r,k} v(t)}^2 + \frac{1}{2} \ip{\Acal (D_{r,k} u(t))}{D_{r,k} u(t)} \label{eq:mal_energy} \footnotemark
\end{equation}
\footnotetext{Physically, the functional $\Ecal_{D,k}(t, r)$ measures the sum of the kinetic energy (associated with the velocity of the Malliavin derivative $D_{r,k} v(t)$) and the potential energy (associated with the deformation of $D_{r,k} u(t)$ in the energy space $V$) between the perturbation time $r$ and the observation time $t$.}

Since the noise acts additively in the original equation, the variational equations \eqref{eq:mal_u} and \eqref{eq:mal_v} contain no stochastic integrals for $t > r$. Thus, we can compute the classical time derivative of $\Ecal_{D,k}(t, r)$ along the trajectories:
\begin{align*}
\frac{d}{dt} \Ecal_{D,k}(t, r) &= \inner{D_{r,k} v(t)}{D_{r,k} v'(t)} + \ip{\Acal (D_{r,k} u(t))}{D_{r,k} u'(t)} \\
&= \inner{D_{r,k} v(t)}{-\Acal(D_{r,k} u(t)) - f'(u(t))D_{r,k} u(t)} + \ip{\Acal(D_{r,k} u(t))}{D_{r,k} v(t)} \\
&= - \int_{\Ocal} f'(u(t,x)) \abs{D_{r,k} u(t,x)}^2 \, dx.
\end{align*}
Because $p > 0$, the derivative of the nonlinearity is non-negative: $f'(u(t,x)) = (p+1)\abs{u(t,x)}^p \ge 0$. Consequently, the time derivative of the energy is negative or zero, meaning that the energy is non-increasing for $t \ge r$:
\begin{equation}
\frac{d}{dt} \Ecal_{D,k}(t, r) \le 0.
\end{equation}
Integrating this inequality, we can bound the energy by its initial value at $t=r$:
\begin{equation}
\Ecal_{D,k}(t, r) \le \Ecal_{D,k}(r, r) = \frac{1}{2} \normH{D_{r,k} v(r)}^2 = \frac{1}{2} \sigma^2 \lambda_k^{-2\gamma},
\end{equation}
where we used $\normH{h_k} = 1$. Taking expectations and summing over $k \in \N$, the total Malliavin energy is bounded by:
\begin{equation}
\E \left[ \sum_{k \in \N} \Ecal_{D,k}(t, r) \right] \le \frac{1}{2} \sigma^2 \sum_{k \in \N} \lambda_k^{-2\gamma} = \frac{1}{2} \kappa,
\end{equation}
which is finite by the trace-class noise condition in Assumption~\ref{ass:standing}(i) ($\kappa = \Tr Q < \infty$). This uniform bound completes the proof that $u(t) \in \mathbb{D}^{1,2}(V)$ and $v(t) \in \mathbb{D}^{1,2}(H)$.
\end{proof}

\subsection{Existence of the Probability Density}
We now focus on the existence of a density for the random variable evaluated at a single spatial point. We assume the spatial dimension is $d=1$ and the fractional order $s \in (1/2, 1)$. Under these conditions, the fractional Sobolev embedding $\widetilde H^s(\Ocal) \hookrightarrow C^{0, s-1/2}(\bar{\Ocal})$ holds (see Section~\ref{sec:inference}). This guarantees that the evaluation functional $E_{x_0}(u) = u(x_0)$ is continuous on $V = H^1_0(\Ocal)$, and thus the random variable $u(t, x_0)$ belongs to $\mathbb{D}^{1,2}(\R)$.

\begin{theorem} \label{thm:density}
Let $d=1$, $s \in (1/2, 1)$, and fix $(t, x_0) \in (0, T] \times \Ocal$. Then, the probability law of the random variable $u(t, x_0)$ is absolutely continuous with respect to the Lebesgue measure on $\R$.
\end{theorem}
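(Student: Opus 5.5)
The plan is to apply the Bouleau--Hirsch criterion (\cite{nualart2006malliavin}, Thm.~2.1.2): if $F\in\mathbb{D}^{1,2}(\R)$ and $\|DF\|_{L^2([0,t];\ell^2)}>0$ a.s., then the law of $F$ is absolutely continuous. Take $F:=u(t,x_0)$.

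\textbf{Step 1: $F\in\mathbb{D}^{1,2}(\R)$.} In $d=1$ we have $V=H^1_0(\Ocal)\hookrightarrow C(\bar\Ocal)$, which is even stronger than the fractional embedding quoted before the statement. So $E_{x_0}:V\to\R$ is a bounded linear functional. Theorem~\ref{thm:malliavin_diff} gives $u(t)\in\mathbb{D}^{1,2}(V)$, and the chain rule for bounded linear maps then yields $F\in\mathbb{D}^{1,2}(\R)$ with
\[
D_{r,k}F=(D_{r,k}u(t))(x_0),\qquad r\le t.
\]
Here $D_{r,k}u(t)$ solves the variational system \eqref{eq:mal_u}--\eqref{eq:mal_v}.

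\textbf{Step 2: nondegeneracy, by a short-time expansion near $r=t$.} We must show that a.s.\ $D_{r,k}F\neq0$ for some $(r,k)$ in a set of positive measure. Fix $k$ with $h_k(x_0)\neq0$. Such a $k$ exists: otherwise every $h_k$ vanishes at $x_0$, which is impossible because $\{h_k\}$ is complete and $V\hookrightarrow C$. Set $\tau:=t-r$ and write $w(\tau):=D_{r,k}u(t)$. Then $w$ solves the linearized wave equation
\[
w''+\Acal w+f'(u)w=0,\qquad w(0)=0,\quad w'(0)=\sigma\lambda_k^{-\gamma}h_k.
\]
Compare $w$ with $w_0(\tau):=\sigma\lambda_k^{-\gamma}\Acal^{-1/2}\sin(\tau\Acal^{1/2})h_k$. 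The plan is to show $\|w(\tau)-\tau\sigma\lambda_k^{-\gamma}h_k\|_V=o(\tau)$ as $\tau\downarrow0$, pathwise, and then evaluate at $x_0$ using the $V\hookrightarrow C$ bound. Since $h_k\in D(\Acal)$ (it lies in $H^2\cap H_0^1$, which is contained in $D(\Acal)$), we have
\[
\|w_0(\tau)-\tau\sigma\lambda_k^{-\gamma}h_k\|_V\le C\tau^2\|\Acal h_k\|.
\]
The perturbation $w-w_0$ is controlled by Duhamel's formula:
\[
\|w-w_0\|_V\le\int_0^\tau\|f'(u)w\|_H\le C\tau\sup|u|^p\sup\|w\|_V.
\]
By the energy bound in the proof of Theorem~\ref{thm:malliavin_diff}, $\|w\|_V\le\sigma\lambda_k^{-\gamma}$. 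Sharpening this with $\|w(s)\|_V\le Cs$ gives $\|w-w_0\|_V=O(\tau^2)$, with a constant that is pathwise finite because $u\in C([0,T];V)\subset C([0,T]\times\bar\Ocal)$. Hence
\[
D_{r,k}F=\sigma\lambda_k^{-\gamma}(t-r)h_k(x_0)+O((t-r)^2)\neq0
\]
for $r\in(t-\eta(\omega),t)$, where $\eta(\omega)>0$. Consequently $\int_0^t|D_{r,k}F|^2dr>0$ a.s.

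\textbf{Main obstacle.} The hard part is Step 2. We need a genuine pathwise $O(\tau^2)$ remainder in a norm that controls point values, with a random constant that is finite a.s. The point is that only a.s.\ finiteness is needed, not moments; $\sup_{[0,T]}\|u\|_{L^\infty}$ provides it. A subtlety is that the derivative for the fixed $k$ is nonrandom at $r=t$ and only its first-order behaviour matters. If the $D(\Acal)$ regularity of $h_k$ were delicate, the fallback is to use $\|w_0(\tau)-\tau w_0'(0)\|_V\le\tau\,\sup_{s\le\tau}\|(\cos(s\Acal^{1/2})-I)h_k\|_H\to0$ by strong continuity. This gives an $o(\tau)$ remainder, which still suffices.
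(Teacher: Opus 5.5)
Your proposal is correct and follows essentially the paper's route: Bouleau--Hirsch, then a Duhamel expansion of $D_{r,k}u(t)$ in $V$ as $r\uparrow t$ (linear part $(t-r)\sigma\lambda_k^{-\gamma}h_k+o(t-r)$, nonlinear remainder controlled by the pathwise bound on $f'(u)$), then evaluation at $x_0$ via $V\hookrightarrow C(\bar{\Ocal})$ and a mode with $h_k(x_0)\neq0$; your bootstrap $\|D_{r,k}u(r+\tau)\|_V\le C(\omega)\tau$, read for fixed $r$ with constants uniform in $r$, is more explicit than the paper about the uniformity needed as $r\uparrow t$. One correction: $H^2\cap H^1_0\subset D(\Acal)$ is false for $s\ge 3/4$, because near $\partial\Ocal$ the restricted fractional Laplacian of the zero extension of $h_k$ behaves like $\operatorname{dist}(x,\partial\Ocal)^{1-2s}\notin L^2$, so the $O(\tau^2)$ linear bound is not available in general and you must use your fallback, which is exactly the paper's strong-continuity argument and should be stated with $\|(\cos(s\Acal^{1/2})-I)h_k\|_V$ rather than the $H$-norm.
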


\begin{proof}
According to the Bouleau-Hirsch criterion (see \cite{SanzSole2005}, \cite[Section 2.1]{nualart2006malliavin}), it is sufficient to prove that the Malliavin variance of $u(t, x_0)$ is strictly positive almost surely:
\begin{equation}\label{eq:taylor_u}
\Sigma_t = \int_0^t \norm{ D_r u(t, x_0) }_{\fU}^2 \, dr = \int_0^t \sum_{k=1}^{\infty} \abs{D_{r,k} u(t, x_0)}^2 \, dr > 0 \quad \PP\text{-a.s.}
\end{equation}
We analyze the short-time asymptotic behavior of the derivative when $r \uparrow t$. To avoid differentiating unbounded operators and to rigorously track the remainder, we express $D_{r,k}u(t)$ via its mild formulation in the energy space $V$. For $t \ge r$, Duhamel's principle yields:
\begin{equation}
D_{r,k}u(t) = I_1(t,r) + I_2(t,r),
\end{equation}
where the linear perturbation term is
\begin{equation}
I_1(t,r) = \mathcal{A}^{-1/2}\sin((t-r)\mathcal{A}^{1/2}) \left[ \sigma\lambda_k^{-\gamma}h_k \right],
\end{equation}
and the nonlinear cross term is
\begin{equation}
I_2(t,r) = - \int_r^t \mathcal{A}^{-1/2}\sin((t-s)\mathcal{A}^{1/2}) \left[ f'(u(s))D_{r,k}u(s) \right] ds.
\end{equation}

We evaluate the expansion in $V$. For the linear term, by the fundamental theorem of calculus and the strong continuity of the group $\cos(\tau\mathcal{A}^{1/2})$ on $V$, we have:
\begin{equation}
\lim_{\tau \to 0} \norm{ \frac{1}{\tau} I_1(r+\tau, r) - \sigma\lambda_k^{-\gamma}h_k }_V = 0.
\end{equation}
Thus, $I_1(t,r) = (t-r)\sigma\lambda_k^{-\gamma}h_k + o_V(t-r)$.

For the nonlinear remainder $I_2(t,r)$, using the unitary bound $\norm{\mathcal{A}^{-1/2}\sin(\tau \mathcal{A}^{1/2}) g}_V \le \norm{g}_H$, we obtain:
\begin{equation}
\norm{I_2(t,r)}_V \le \int_r^t \norm{f'(u(s))D_{r,k}u(s)}_H ds \le \int_r^t \norm{f'(u(s))}_{L^\infty} \norm{D_{r,k}u(s)}_H ds.
\end{equation}
Since $d=1$, the Sobolev embedding $V \hookrightarrow L^\infty(\Ocal)$ ensures that $\norm{f'(u(s, \omega))}_{L^\infty}$ is bounded by a random constant $C(\omega)$ on $[0,T]$. Furthermore, $D_{r,k}u(r) = 0$, and by the pathwise continuity of the Malliavin derivative in $V$ (Theorem 6.1), we have $\norm{D_{r,k}u(s)}_V \to 0$ as $s \downarrow r$. Consequently, the integrand vanishes as $s \to r$, yielding:
\begin{equation}
\norm{I_2(t,r)}_V \le C(\omega) \int_r^t \norm{D_{r,k}u(s)}_V ds = o_V(t-r) \quad \PP\text{-a.s.}
\end{equation}

Combining these estimates, we obtain the robust asymptotic expansion in the energy space:
\begin{equation}
\norm{ D_{r,k}u(t) - (t-r)\sigma\lambda_k^{-\gamma}h_k }_V = o_V(t-r) \quad \PP\text{-a.s.}
\end{equation}
To transition to the pointwise evaluation, we use the fractional Sobolev embedding for $d=1$ and $s \in (1/2, 1)$. Since $V \hookrightarrow C(\bar{\Ocal})$, there exists a constant $C_{emb} > 0$ such that $\sup_{x \in \Ocal} \abs{v(x)} \le C_{emb}\norm{v}_V$. Evaluating at $x_0 \in \Ocal$, the norm convergence rigorously translates to the pointwise expansion:
\begin{equation}
D_{r,k} u(t, x_0) = \sigma \lambda_k^{-\gamma} h_k(x_0) (t-r) + o(t-r) \quad \PP\text{-a.s.}
\end{equation}

Since $\{h_k\}_{k \ge 1}$ is a complete orthonormal basis in $L^2(\Ocal)$, there exists at least one integer $k_0$ such that $h_{k_0}(x_0) \neq 0$. For this specific $k_0$, there is a sufficiently small $\epsilon = \epsilon(k_0, x_0, p, \omega) > 0$ such that for all $r \in [t-\epsilon, t]$, the linear term dominates the residual. This implies:
\begin{equation}
\abs{D_{r,k_0} u(t, x_0)}^2 \ge C_{k_0, x_0} (t-r)^2, \label{eq:lower_bound_mode}
\end{equation}
where $C_{k_0, x_0} = \frac{1}{2} \sigma^2 \lambda_{k_0}^{-2\gamma} \abs{h_{k_0}(x_0)}^2 > 0$. Finally, we lower bound the total Malliavin variance by restricting the integral to the interval $[t-\epsilon, t]$ and considering only the $k_0$-th mode:
\begin{equation}
\Sigma_t \ge \int_{t-\epsilon}^t \abs{D_{r,k_0} u(t, x_0)}^2 \, dr \ge \int_{t-\epsilon}^t C_{k_0, x_0} (t-r)^2 \, dr = C_{k_0, x_0} \frac{\epsilon^3}{3} > 0 \quad \PP\text{-a.s.}
\end{equation}
Because this lower bound is strictly positive almost surely (since $\epsilon > 0$ a.s.), the Bouleau-Hirsch criterion is satisfied, and the existence of the density is proved.
\end{proof}

\begin{corollary} \label{cor:embedding_limit}
For $d \ge 2$ or $s \le 1/2$, the pointwise evaluation $u(t, x_0)$ is not covered by the current embedding-based framework, as $H^s(\Ocal)$ fails to embed into $C(\bar{\Ocal})$. This represents a fundamental limitation of the Sobolev embedding rather than a failure of the analytical proof, necessitating a more sophisticated analysis (such as the use of spatially averaged observations) to establish density properties.
\end{corollary}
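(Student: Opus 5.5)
The statement is a remark-type corollary. Its mathematical content is a \emph{negative} embedding fact: the step $\sup_{x}\abs{v(x)}\le C_{emb}\norm{v}$ used in the proof of Theorem~\ref{thm:density} has no analogue outside the range $d=1$, $s>1/2$. I would prove it by exhibiting explicit unbounded functions in the relevant spaces. Then I would check that every step of the proof of Theorem~\ref{thm:density} except the pointwise transfer survives, so that the failure is located exactly at the embedding.

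\emph{Step 1 (the fractional scale, $s\le d/2$, in particular $d=1$, $s\le 1/2$).} I will use the Fourier form of the Gagliardo seminorm, $[u]^2_{H^s(\R^d)}=\int_{\R^d}\abs{\xi}^{2s}\abs{\hat u(\xi)}^2\,d\xi$, as recalled in the proof of Lemma~\ref{lem:forms}. Fix $x_0\in\Ocal$ and a cutoff $\chi\in C_c^\infty(\Ocal)$ with $\chi\equiv1$ near $x_0$, and set
\[
w(x):=\chi(x)\,\log\log\bigl(e+\abs{x-x_0}^{-1}\bigr).
\]
Then $w$ is unbounded near $x_0$, vanishes outside $\Ocal$, and lies in $H^{d/2}(\R^d)$, hence in $\widetilde H^s(\Ocal)$ for every $s\le d/2$. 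To check membership I would compute the Gagliardo double integral directly near the singularity; equivalently, I would use that $\hat w(\xi)\sim c\,\abs{\xi}^{-d}(\log\abs\xi)^{-1}$ for large $\abs\xi$, so that $\int\abs\xi^{d}\abs{\hat w}^2\,d\xi<\infty$ because $\int^\infty r^{-1}(\log r)^{-2}\,dr<\infty$. Consequently the evaluation $E_{x_0}$ is not continuous on $\widetilde H^s(\Ocal)$, and no constant $C_{emb}$ exists.

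\emph{Step 2 (the energy space, $d\ge2$).} Here $V=H^1_0(\Ocal)$ by Lemma~\ref{lem:forms}. For $d=2$ the same $w$ lies in $H^1_0$, since $\abs{\nabla w}^2\sim r^{-2}(\log r^{-1})^{-2}$ is integrable against $r\,dr$. For $d\ge3$ I would take $w=\chi\,\abs{x-x_0}^{-\alpha}$ with $0<\alpha<(d-2)/2$, for which $\nabla w\in L^2$. In both cases $V\not\hookrightarrow C(\bar\Ocal)$, so the norm expansion $\norm{D_{r,k}u(t)-(t-r)\sigma\lambda_k^{-\gamma}h_k}_V=o(t-r)$ obtained in the proof of Theorem~\ref{thm:density} (which only used Theorem~\ref{thm:malliavin_diff} and the group bounds) cannot be evaluated at $x_0$.

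\emph{Step 3 (locating the obstruction).} I would note that the remaining ingredients are dimension-free: the mild splitting $I_1+I_2$ and the $o_V(t-r)$ control of $I_1$. The exception is the bound $\norm{f'(u)}_{L^\infty}\le C(\omega)$. That bound also used an embedding into $L^\infty$, so it must be flagged as a second point that fails for $d\ge2$; in higher dimension one would need an $L^{q}$--$L^{r}$ H\"older bound instead.

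The main obstacle, and the point I would state carefully, is logical rather than computational. For $d=1$ one already has $V=H^1_0\hookrightarrow C(\bar\Ocal)$ for \emph{every} $s\in(0,1)$, so the hypothesis $s>1/2$ is not forced by the energy space. The corollary's restriction concerns only the route through $\widetilde H^s$, and Step~1 shows that this route genuinely fails for $s\le1/2$. I would therefore prove the corollary in that precise sense: the failure is of the embedding, not of the density claim.
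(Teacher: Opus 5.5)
Your proposal is correct and follows the same reasoning the paper gives, in the corollary itself and in Remark~\ref{rem:malliavin_higher_d}: the obstruction is the failure of the Sobolev embedding at or beyond the critical threshold. You make this concrete with the $\log\log$ singularity, which lies in $H^{d/2}$ and, for $d=2$, in $H^1_0$, and with the power singularity $\abs{x-x_0}^{-\alpha}$ for $d\ge3$, where the paper only alludes to logarithmic singularities.

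Your extra observations are accurate and sharper than the paper. The bound $\norm{f'(u)}_{L^\infty}\le C(\omega)$ in the proof of Theorem~\ref{thm:density} is a second place where $d=1$ is used. And since $V=H^1_0(\Ocal)\hookrightarrow C(\bar\Ocal)$ in $d=1$ for every $s\in(0,1)$, the clause $s\le 1/2$ concerns only the route through $\widetilde H^s(\Ocal)$, not the energy-space argument.
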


\begin{remark}[Extension to Higher Dimensions and Spatially Averaged Observations] \label{rem:malliavin_higher_d}
In dimensions $d \ge 2$, the energy space $V = H^1_0(\Ocal)$ fails to embed into the space of continuous functions $C(\bar{\Ocal})$ since the critical Sobolev threshold is crossed. Specifically, in the two-dimensional case ($d=2$), we only have the embedding $H^1_0(\Ocal) \hookrightarrow L^q(\Ocal)$ for all $q < \infty$, meaning that functions in the energy space may possess local logarithmic singularities. Consequently, the pointwise evaluation functional $E_{x_0}(u) = u(x_0)$ is no longer continuous on $V$, and the pointwise value $u(t, x_0)$ is not well-defined as a random variable in $\mathbb{D}^{1,2}(\R)$. 

To circumvent this geometric limitation and establish the existence of a density in higher dimensions, one must instead consider spatially averaged observations of the solution. Let $\phi \in C_c^\infty(\Ocal)$ (or more generally $\phi \in H$) be a smooth, non-negative test function representing a spatial measurement kernel. We define the averaged random variable:
\begin{equation}
F_\phi = \ip{u(t)}{\phi} = \int_{\Ocal} u(t, x) \phi(x) \, dx.
\end{equation}
Since the mapping $u \mapsto \ip{u}{\phi}$ is a continuous linear functional on the pivot space $H = L^2(\Ocal)$, it is automatically continuous on the energy space $V$ in any dimension $d \ge 1$. Hence, $F_\phi \in \mathbb{D}^{1,2}(\R)$ and its Malliavin derivative is given by $D_{r,k} F_\phi = \ip{D_{r,k} u(t)}{\phi}$. 

By applying the short-time asymptotic expansion  argument derived for \eqref{eq:taylor_u}, the Malliavin variance of the averaged observation:
\begin{equation}
\Sigma_t^\phi = \int_0^t \sum_{k=1}^\infty \abs{\ip{D_{r,k} u(t)}{\phi}}^2 \, dr
\end{equation}
can be shown to be strictly positive almost surely, provided that $\phi$ is not orthogonal to the active eigenfunctions of the noise. This recovers the absolute continuity of the law of $F_\phi$ with respect to the Lebesgue measure on $\R$ for any spatial dimension $d \ge 1$, illustrating that the density of the state exists when observed through any physically realistic finite-resolution window.
\end{remark}

\section{The focusing case}\label{sec:focusing}

We now take the focusing sign ($\varepsilon=-1$ in
\eqref{stochastic_local_nonlocal}),
\begin{equation}\label{eq:focusing}
\partial_t^2u+\Acal u=\abs{u}^pu+\sigma\,\dot W_Q,\qquad
\Acal:=-\theta\Delta+\beta(-\Delta)^{s},
\end{equation}
with the same exterior and initial conditions, $W_Q$ as in \eqref{eq:WQ}.
We define the (sign-indefinite) energy as
\begin{equation}\label{eq:energy-focusing}
\Ecal(t):=\tfrac12\norm{v(t)}^2+\tfrac12\ip{\Acal u(t)}{u(t)}
-\tfrac{1}{p+2}\normLp{u(t)}{p+2}^{p+2},\qquad
\ip{\Acal u}{u}=\theta\norm{\nabla u}^2+\beta[u]^2_{H^s}.
\end{equation}
Throughout this section, a \emph{solution on $[0,T]$} means a solution of
\eqref{eq:focusing} with trajectories in
$\calH:=\bigl(H_0^1(\Ocal)\cap L^{p+2}(\Ocal)\bigr)\times L^2(\Ocal)$
satisfying
\begin{equation}\label{eq:moment-class}
\E\sup_{t\le T}\Bigl(\norm{u(t)}_{H^1_0}^2+\normLp{u(t)}{p+2}^{p+2}
+\norm{v(t)}^2\Bigr)<\infty,
\end{equation}
so that all the expectations below are finite; we write $\mathcal S_T$ for
this class. For any solution in $\mathcal S_T$ the same computation as in
Theorem~\ref{thm:energy-identity}, with the focusing sign, yields the It\^o
identity
\begin{equation}\label{eq:energy-identity-foc}
d\Ecal(t)=\ip{v(t)}{\sigma\,dW_Q(t)}+\tfrac{\kappa}{2}\,dt,
\qquad\text{hence}\qquad
\E\,\Ecal(t)=\Ecal(0)+\tfrac{\kappa}{2}\,t ,
\end{equation}
the stochastic integral being a martingale by \eqref{eq:moment-class} and
$\Tr Q=\kappa<\infty$. 

\subsection{Second-moment blow-up}

Before stating the theorem we check that its hypotheses are not vacuous.

\begin{lemma}\label{lem:nonvacuous}
Fix $\varphi\in H^1_0(\Ocal)\setminus\{0\}$ and let $U_0=V_0=\mu\varphi$
with $\mu>0$. Then
\[
\ip{U_0}{V_0}_{L^2}=\mu^2\norm{\varphi}^2>0,\qquad
T^{*}=\frac{2\norm{U_0}^2}{p\ip{U_0}{V_0}}=\frac2p
\quad\text{(independent of $\mu$)},
\]
while
\[
\Ecal(0)=\frac{\mu^2}{2}\Bigl(\norm{\varphi}^2
+\ip{\Acal\varphi}{\varphi}\Bigr)
-\frac{\mu^{p+2}}{p+2}\normLp{\varphi}{p+2}^{p+2}
\;\xrightarrow[\mu\to\infty]{}\;-\infty .
\]
Hence condition \eqref{eq:blowup-cond} holds for every $\mu$ larger than an
explicit $\mu_{*}=\mu_{*}(\varphi,\theta,\beta,s,p,\kappa)$.
\end{lemma}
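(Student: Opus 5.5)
The proof will be a direct computation, organised so that the only $\mu$-dependence sits in $\Ecal(0)$. I will first check that the data are admissible. Since $\varphi\in H^1_0(\Ocal)$, Assumption~\ref{ass:standing}(ii) gives $\varphi\in L^{p+2}(\Ocal)$ via the Sobolev embedding. Hence $(U_0,V_0)=(\mu\varphi,\mu\varphi)\in\calH$, and the moment requirement of Assumption~\ref{ass:standing}(iii) holds trivially because the data are deterministic. Moreover $\varphi\neq0$ gives $\norm{\varphi}>0$ and $\normLp{\varphi}{p+2}>0$; both facts are used below.

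Next I will handle the two $\mu$-homogeneous quantities. Bilinearity gives $\ip{U_0}{V_0}=\mu^2\norm{\varphi}^2>0$ and $\norm{U_0}^2=\mu^2\norm{\varphi}^2$. Substituting into $T^*=2\norm{U_0}^2/(p\ip{U_0}{V_0})$, the factor $\mu^2\norm{\varphi}^2$ cancels and leaves $T^*=2/p$.

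For the energy \eqref{eq:energy-focusing} I will use homogeneity term by term. The kinetic part is $\tfrac12\norm{V_0}^2=\tfrac{\mu^2}{2}\norm{\varphi}^2$. The quadratic form $\ip{\Acal U_0}{U_0}=\mu^2\bigl(\theta\norm{\nabla\varphi}^2+\beta[\varphi]^2_{H^s}\bigr)$ is finite by Lemma~\ref{lem:forms}. The potential part is $\tfrac{1}{p+2}\normLp{U_0}{p+2}^{p+2}=\tfrac{\mu^{p+2}}{p+2}\normLp{\varphi}{p+2}^{p+2}$. Together these give the displayed formula. Writing $a:=\tfrac12(\norm{\varphi}^2+\ip{\Acal\varphi}{\varphi})$ and $b:=\tfrac{1}{p+2}\normLp{\varphi}{p+2}^{p+2}>0$, we have $\Ecal(0)=a\mu^2-b\mu^{p+2}=\mu^2(a-b\mu^p)$. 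Since $p>0$ and $b>0$, this tends to $-\infty$.

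The last step transfers this to \eqref{eq:blowup-cond}, which I expect to require $\ip{U_0}{V_0}>0$ together with $\Ecal(0)$ lying below a threshold built from $\kappa$ and $T^*$, for instance $\Ecal(0)+\tfrac{\kappa}{2}T^*<0$ or a variant with a margin. The key observation is that $T^*=2/p$ does not depend on $\mu$, so the threshold $-c(\kappa,p)$ is a fixed number. The positivity of $\ip{U_0}{V_0}$ holds for every $\mu>0$. It therefore suffices to find $\mu_*$ with $\mu^2(a-b\mu^p)<-c$ for all $\mu>\mu_*$. Once $\mu^p\ge 2a/b$ we have $\mu^2(a-b\mu^p)\le-\tfrac b2\mu^{p+2}$, so one explicit choice is
\[
\mu_*:=\max\Bigl\{(2a/b)^{1/p},\,(2c/b)^{1/(p+2)}\Bigr\}.
\]
This depends only on $\varphi,\theta,\beta,s,p,\kappa$, as claimed.

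The only real obstacle is bookkeeping rather than analysis: matching the exact form of \eqref{eq:blowup-cond}, which is stated after this lemma. In particular, if that condition involves $\norm{U_0}^2$ or $\ip{U_0}{V_0}$ in addition to $\kappa$, then the threshold also scales like $\mu^2$. In that case I will absorb the extra term into the coefficient $a$, since it is still dominated by the $\mu^{p+2}$ term, and the argument goes through unchanged.
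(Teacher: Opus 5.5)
Your proposal is correct and follows the same route as the paper. You compute $\ip{U_0}{V_0}$, $T^*=2/p$ and $\Ecal(0)$ directly by homogeneity, then note that $\Ecal(0)\sim-\frac{\mu^{p+2}}{p+2}\normLp{\varphi}{p+2}^{p+2}$ eventually lies below the $\mu$-independent threshold $-\frac{\kappa}{2}T^*=-\kappa/p$. Your explicit $\mu_*=\max\{(2a/b)^{1/p},(2c/b)^{1/(p+2)}\}$ with $c=\kappa/p$, which is the exact form of \eqref{eq:blowup-cond} so your closing hedge is unnecessary, makes concrete the ``explicit'' constant that the paper only asserts.
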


\begin{proof}
Both displays are proved by using direct computations; the last assertion follows because
$\Ecal(0)\sim-\mu^{p+2}\normLp{\varphi}{p+2}^{p+2}/(p+2)$ as
$\mu\to\infty$ while the right-hand side is a
fixed constant.
\end{proof}

\begin{theorem}\label{thm:blowup}
Let Assumption~\ref{ass:standing} hold and consider \eqref{eq:focusing}.
Suppose the data satisfy
\begin{equation}\label{eq:blowup-cond}
\ip{U_0}{V_0}_{L^2}>0\qquad\text{and}\qquad
\Ecal(0)\;<\;-\,\frac{\kappa}{2}\,T^{*},\qquad\text{where}\qquad
T^{*}:=\frac{2\,\norm{U_0}_{L^2}^2}{p\,\ip{U_0}{V_0}_{L^2}} .
\end{equation}
Then $\mathcal S_{T^{*}}=\emptyset$: no solution of \eqref{eq:focusing}
exists on $[0,T^{*}]$ in the class \eqref{eq:moment-class}. More precisely,
if $T_{**}:=\sup\{T>0:\ \mathcal S_T\neq\emptyset\}$, then
$T_{**}\le T^{*}$ and
\[
\lim_{t\uparrow T_{**}}\E\,\norm{u(t)}_{L^2}^2=+\infty .
\]
\end{theorem}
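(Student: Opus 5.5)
We argue by contradiction with the classical Levine concavity method, applied to the second moment. Suppose $\mathcal S_T\neq\emptyset$ for some $T\ge T^*$ and fix a solution $(u,v)$ in this class. Set
\[
M(t):=\E\norm{u(t)}^2 ,\qquad t\in[0,T].
\]
Since $du=v\,dt$ carries no martingale part, we have $M'(t)=2\E\ip{u}{v}$. Applying It\^o's formula to $\ip{u}{v}$ in the weak formulation (tested with $u$, with the focusing sign), we get
\[
d\ip{u}{v}=\bigl(\norm v^2-\ip{\Acal u}{u}+\normLp{u}{p+2}^{p+2}\bigr)dt+\ip{u}{\sigma\,dW_Q}.
\]
The martingale term has zero mean by \eqref{eq:moment-class}, so
\[
M''=2\E\bigl(\norm v^2-\ip{\Acal u}{u}+\normLp{u}{p+2}^{p+2}\bigr).
\]
We eliminate the $L^{p+2}$ term using \eqref{eq:energy-focusing} and the identity $\E\Ecal(t)=\Ecal(0)+\frac\kappa2 t$ from \eqref{eq:energy-identity-foc}. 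This gives
\[
M''=(p+4)\E\norm v^2+p\,\E\ip{\Acal u}{u}-2(p+2)\bigl(\Ecal(0)+\tfrac\kappa2t\bigr)\ \ge\ (p+4)\E\norm v^2+2(p+2)\eta,
\]
where $\eta:=-\Ecal(0)-\frac\kappa2T^*>0$ by \eqref{eq:blowup-cond}, valid for $t\le T^*$. To justify the It\^o computation rigorously, we perform it on the Galerkin level, or with the same mollification argument used in Theorem~\ref{thm:energy-identity}, and pass to the limit using the moment class.

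Next comes the concavity step. By Cauchy--Schwarz (in $\Omega\times\Ocal$), $(M')^2\le 4M\,\E\norm v^2$. Hence $MM''\ge\frac{p+4}{4}(M')^2=(1+\alpha)(M')^2$ with $\alpha=p/4>0$. It follows that $G:=M^{-\alpha}$ satisfies
\[
G''=-\alpha M^{-\alpha-2}\bigl(MM''-(1+\alpha)(M')^2\bigr)\le0,
\]
so $G$ is concave on $[0,T]$. At $t=0$ we have $M(0)=\norm{U_0}^2$ (deterministic data; for random data one replaces the quantities in \eqref{eq:blowup-cond} by their expectations) and $M'(0)=2\ip{U_0}{V_0}>0$. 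Concavity gives
\[
G(t)\le G(0)+G'(0)t=\norm{U_0}^{-2\alpha}\Bigl(1-\frac{p\ip{U_0}{V_0}}{2\norm{U_0}^2}\,t\Bigr),
\]
whose right side vanishes exactly at $t=T^*$. Since $G>0$ on any interval where $M$ is finite, $M(t)\to\infty$ as $t\uparrow T^*$. But $M$ is bounded on $[0,T^*]$ for a solution in $\mathcal S_{T^*}$, which is a contradiction. Hence $\mathcal S_{T^*}=\emptyset$ and $T_{**}\le T^*$.

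For the limit statement, we note that $\mathcal S_T\neq\emptyset$ for $T<T_{**}$ and that solutions are consistent under restriction: by pathwise uniqueness, as in Proposition~\ref{prop:uniqueness} using the local theory, the local solution is unique. So $M$ is well defined on $[0,T_{**})$ and $G$ is concave and positive there. If $\liminf_{t\uparrow T_{**}}M(t)<\infty$, we would have to show the solution extends in the moment class past $T_{**}$, which contradicts the definition of $T_{**}$.

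This last point is the main obstacle. Bounding $\E\norm u^2$ does not immediately control the full energy norm in \eqref{eq:moment-class}. We would try to use that $M'$ is increasing, so $M$ is monotone near $T_{**}$ and the limit exists in $(0,\infty]$. Then, from the bound $M''\ge(p+4)\E\norm v^2$ integrated, together with the energy identity, we aim to control $\E\norm v^2$, $\E\ip{\Acal u}{u}$ and $\E\normLp{u}{p+2}^{p+2}$ whenever $M$ and $M'$ stay bounded. If that continuation argument proves delicate, we would state the limit as a $\limsup$ and invoke the local existence theory (Theorem~\ref{thm:local-fixed}) to upgrade it.
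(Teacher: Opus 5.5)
Your proof that $\mathcal S_{T^{*}}=\emptyset$ and $T_{**}\le T^{*}$ follows the paper's argument essentially step for step. It uses the same functional $M(t)=\E\norm{u(t)}^2$, the same elimination of $\normLp{u}{p+2}^{p+2}$ through \eqref{eq:energy-identity-foc}, the same Cauchy--Schwarz bound on $\Omega\times\Ocal$ and the same exponent $\alpha=p/4$. Your $\eta=-\Ecal(0)-\frac{\kappa}{2}T^{*}>0$ plays the role of the paper's $t_0=-2\Ecal(0)/\kappa>T^{*}$.

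You omit one line that the paper has and that you need before forming $M^{-\alpha}$: $M''\ge 2(p+2)\eta>0$ and $M'(0)=2\ip{U_0}{V_0}>0$ give $M(t)\ge\norm{U_0}^2>0$. A smaller point concerns the It\^o step. The Galerkin limit behind Theorem~\ref{thm:energy-identity} rests on the defocusing bound \eqref{eq:pathwise-gronwall} and is not available in the focusing case. Justify that step through the splitting $u=y+z$ instead, or, as the paper does, take \eqref{eq:energy-identity-foc} as given.

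The genuine gap is the final assertion $\lim_{t\uparrow T_{**}}\E\norm{u(t)}^2=+\infty$. You leave it open, and the continuation you sketch would not close it, for the following reasons.
\begin{itemize}
\item Run on $[0,T]$ with $T<T_{**}$, concavity gives only $\E\norm{u(t)}^2\ge\norm{U_0}^2(1-t/T^{*})^{-4/p}$ (Proposition~\ref{prop:profile}). This stays bounded as $t\uparrow T_{**}$ whenever $T_{**}<T^{*}$.
\item If $M$ and $M'$ are bounded, the exact formula for $M''$ and the energy identity yield only $\int_0^{T_{**}}\E\bigl(\norm{v}^2+\ip{\Acal u}{u}+\normLp{u}{p+2}^{p+2}\bigr)\,dt<\infty$. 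That is no pointwise-in-time bound: the focusing energy is not coercive, so knowing $\E\Ecal(t)$ does not control $\E\norm{(u,v)(t)}_{\Vcal}^2$. Still less does it give the $\E\sup_t$ demanded by \eqref{eq:moment-class}.
\item Membership in $\mathcal S_T$ for some $T>T_{**}$ needs almost sure existence on a \emph{deterministic} interval. Theorem~\ref{thm:local-fixed} only restarts on random intervals of length about $c_\star\bar R^{-p}$, where $\bar R$ is the unbounded size of the restart data.
\end{itemize}
Nothing in the argument excludes two scenarios: $T_{**}$ might be set by pathwise explosion with positive probability immediately after $T_{**}$, or by the $\E\sup$ of the energy norm becoming infinite. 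In either case $\E\norm{u(t)}^2$ could stay bounded on $[0,T_{**})$; even $\mathcal S_{T_{**}}\neq\emptyset$ is not ruled out. So a $\limsup$ version is not available either. Separately, your consistency step through Proposition~\ref{prop:uniqueness} requires $p\le 2/(d-2)$. That is narrower than Assumption~\ref{ass:standing}(ii), under which the theorem is stated.

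The paper does not supply this step either. It obtains the limit by calling the zero of $H=G^{-p/4}$ $T_{**}$, but that zero exists only under the contradiction hypothesis. Its remark that ``the same computation on $[0,T]$ for $T<T^{*}$'' identifies $T_{**}$ reproduces nothing more than the profile bound above. What the concavity method actually proves is $\mathcal S_{T^{*}}=\emptyset$, $T_{**}\le T^{*}$, and that lower bound. Divergence of $\E\norm{u(t)}^2$ at $T_{**}$ follows only if $T_{**}=T^{*}$. You should either state the result in that weaker form or find a genuinely new ingredient.
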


\begin{proof}
Suppose $\mathcal S_{T^{*}}\neq\emptyset$ and let $(u,v)$ be a solution on
$[0,T^{*}]$. Set $G(t):=\E\norm{u(t)}_{L^2}^2$. By
\eqref{eq:moment-class},
$\sup_{t\le T^{*}}\E\bigl(\norm{u}^2+\norm{v}^2\bigr)<\infty$; in
particular $\E\int_0^{T^{*}}\norm{u}^2dt<\infty$ and $\Tr Q=\kappa<\infty$,
so $t\mapsto\int_0^t\ip{u}{\sigma\,dW_Q}$ is a martingale, and the same
holds for $\int_0^t\ip{v}{\sigma\,dW_Q}$. All interchanges of $\E$ and
$d/dt$ below are justified by dominated convergence with the dominating
function supplied by \eqref{eq:moment-class}. Finally $u(t)\in V$ a.s., so
$\ip{\Acal u}{u}$ is a finite duality pairing.

Since $du=v\,dt$ carries no martingale part, $d\norm u^2=2\ip uv\,dt$ and
$G\in C^1$ with
\begin{equation}\label{eq:G1}
G'(t)=2\,\E\ip{u(t)}{v(t)} .
\end{equation}
By It\^o's formula applied to $\ip uv$ (only $v$ has a martingale part and
the cross-variation of $u$ with $W_Q$ vanishes),
\[
d\ip uv=\norm v^2dt+\ip u{dv}
=\Bigl(\norm v^2-\ip{\Acal u}{u}+\normLp{u}{p+2}^{p+2}\Bigr)dt
+\ip{u}{\sigma\,dW_Q},
\]
so that, taking expectations, $G\in C^2$ with
\begin{equation}\label{eq:G2}
G''(t)=2\,\E\norm v^2-2\,\E\ip{\Acal u}{u}+2\,\E\normLp{u}{p+2}^{p+2}.
\end{equation}
From \eqref{eq:energy-focusing},
$\normLp{u}{p+2}^{p+2}=(p+2)\bigl(\tfrac12\norm v^2
+\tfrac12\ip{\Acal u}{u}-\Ecal\bigr)$. Substituting into \eqref{eq:G2} and
using \eqref{eq:energy-identity-foc},
\begin{align}
G''(t)&=(p+4)\,\E\norm v^2+p\,\E\ip{\Acal u}{u}
-2(p+2)\Bigl(\Ecal(0)+\tfrac{\kappa}{2}t\Bigr)\label{eq:G2-exact}\\
&\ge(p+4)\,\E\norm v^2
-2(p+2)\Bigl(\Ecal(0)+\tfrac{\kappa}{2}t\Bigr),\label{eq:G2-lower}
\end{align}
since $p\,\E\ip{\Acal u}{u}\ge0$. Introduce
\begin{equation}\label{eq:t0}
t_0:=-\frac{2\,\Ecal(0)}{\kappa},\qquad\text{so that}\qquad
\Ecal(0)+\tfrac{\kappa}{2}t\le0\ \text{ for } t\in[0,t_0],
\end{equation}
and note that $t_0>T^{*}$ by the second condition in
\eqref{eq:blowup-cond}.

On $[0,t_0]$ both terms on the right of \eqref{eq:G2-lower} are
non-negative, so $G''(t)\ge 0$ on $[0,t_0]$, i.e.\ $G$ is convex there.
Since $G'(0)=2\ip{U_0}{V_0}>0$ by \eqref{eq:blowup-cond}, we get
$G'(t)\ge G'(0)>0$ and hence
\begin{equation}\label{eq:G-positive}
G(t)\ \ge\ G(0)+G'(0)\,t\ \ge\ \norm{U_0}_{L^2}^2>0,\qquad t\in[0,t_0].
\end{equation}
In particular $H:=G^{-\alpha}$ is well defined and $C^2$ on $[0,t_0]$ for
any $\alpha>0$.

Put $\alpha:=p/4$. Then $H'=-\alpha G^{-\alpha-1}G'$ and
$H''=-\alpha G^{-\alpha-2}\bigl[G''G-(\alpha+1)(G')^2\bigr]$. By the
Cauchy--Schwarz inequality, first in $L^2(\Ocal)$ and then on $\Omega$,
\[
\bigl(G'(t)\bigr)^2=4\bigl(\E\ip uv\bigr)^2
\le4\bigl(\E\,\norm u\,\norm v\bigr)^2
\le4\,\E\norm u^2\ \E\norm v^2=4\,G(t)\,\E\norm{v(t)}^2 ,
\]
so
$(\alpha+1)(G')^2\le\frac{p+4}{4}\cdot4\,G\,\E\norm v^2
=(p+4)\,G\,\E\norm v^2$. Combining with \eqref{eq:G2-lower},
\[
G''G-(\alpha+1)(G')^2
\;\ge\;-2(p+2)\Bigl(\Ecal(0)+\tfrac{\kappa}{2}t\Bigr)G(t)\;\ge\;0
\qquad\text{on }[0,t_0],
\]
by \eqref{eq:t0} and \eqref{eq:G-positive}. Hence $H''\le0$: $H$ is concave
on $[0,t_0]\supset[0,T^{*}]$.

By \eqref{eq:G1} and \eqref{eq:blowup-cond}, $H(0)=G(0)^{-\alpha}>0$ and
$H'(0)=-\alpha G(0)^{-\alpha-1}G'(0)<0$. Concavity gives
$0<H(t)\le H(0)+H'(0)t$ for $t\in[0,t_0]$, and the affine right-hand side
vanishes at
\[
\frac{H(0)}{\abs{H'(0)}}=\frac{G(0)}{\alpha\,G'(0)}
=\frac{\norm{U_0}^2}{\frac p4\cdot2\ip{U_0}{V_0}}
=\frac{2\,\norm{U_0}_{L^2}^2}{p\,\ip{U_0}{V_0}_{L^2}}=T^{*}\;<\;t_0 .
\]
Thus $H$ must vanish at some $T_{**}\le T^{*}$, i.e.\
$G(t)=H(t)^{-1/\alpha}\to+\infty$ as $t\uparrow T_{**}$. This contradicts
$\sup_{t\le T^{*}}G(t)<\infty$, which is part of \eqref{eq:moment-class}.
Hence $\mathcal S_{T^{*}}=\emptyset$, and the same computation applied on
$[0,T]$ for $T<T^{*}$ identifies $T_{**}$ as claimed.
\end{proof}

\begin{proposition}[Blow-up profile]\label{prop:profile}
Under the hypotheses of Theorem~\ref{thm:blowup}, every solution in
$\mathcal S_T$ with $T<T^{*}$ satisfies the pointwise lower bound
\begin{equation}\label{eq:profile}
\E\,\norm{u(t)}_{L^2}^2
\;\ge\;\norm{U_0}_{L^2}^2\Bigl(1-\frac{t}{T^{*}}\Bigr)^{-4/p},
\qquad 0\le t<T_{**} .
\end{equation}
In particular the second moment diverges \emph{at least} at the algebraic
rate $(T^{*}-t)^{-4/p}$ as $t\uparrow T^{*}$, and consequently
$T_{**}\le T^{*}$.
\end{proposition}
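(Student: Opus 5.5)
The bound follows directly from the concavity argument in the proof of Theorem~\ref{thm:blowup}; we only need to read off the concave comparison quantitatively. Fix a solution in $\mathcal S_T$ with $T<T^{*}$ and set $G(t)=\E\norm{u(t)}_{L^2}^2$ and $H=G^{-\alpha}$ with $\alpha=p/4$, exactly as there. The first step is to recheck that every ingredient of that proof is available on $[0,T]$ using only membership in $\mathcal S_T$:
\begin{itemize}
\item the identities \eqref{eq:G1}, \eqref{eq:G2} and \eqref{eq:G2-lower};
\item $G''\ge0$ on $[0,t_0]$;
\item the positivity \eqref{eq:G-positive};
\item the Cauchy--Schwarz bound $(G')^2\le 4G\,\E\norm v^2$.
\end{itemize}
None of these uses the endpoint $T^{*}$. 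The only place the horizon enters is through $t_0=-2\Ecal(0)/\kappa$, and $t_0>T^{*}>T$ by \eqref{eq:blowup-cond}. So $H$ is $C^2$, positive and concave on $[0,T]$.

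The second step is the tangent-line inequality for the concave function $H$:
\[
H(t)\le H(0)+H'(0)t=G(0)^{-\alpha}\Bigl(1-\frac{\alpha G'(0)}{G(0)}t\Bigr)=\norm{U_0}^{-2\alpha}\Bigl(1-\frac{t}{T^{*}}\Bigr),
\]
where we use $\alpha G'(0)/G(0)=\frac p4\cdot 2\ip{U_0}{V_0}/\norm{U_0}^2=1/T^{*}$, as computed in Theorem~\ref{thm:blowup}. For $t<T^{*}$ the right-hand side is positive, and $H>0$. The map $x\mapsto x^{-1/\alpha}$ is decreasing on $(0,\infty)$, so applying it gives
\[
G(t)=H(t)^{-1/\alpha}\ge\norm{U_0}^2\bigl(1-t/T^{*}\bigr)^{-4/p},
\]
which is \eqref{eq:profile}, since $1/\alpha=4/p$. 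Because $T<T^{*}$ is arbitrary and every solution in $\mathcal S_T$ satisfies the bound, it holds for all $0\le t<T_{**}$.

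The final step handles the consequences. Suppose $T_{**}>T^{*}$. Then some solution in $\mathcal S_T$ with $T\in(T^{*},T_{**})$ would have $\sup_{t\le T^{*}}G<\infty$ by \eqref{eq:moment-class}. Its restriction to $[0,t]$ with $t<T^{*}$ lies in $\mathcal S_t$, so the lower bound forces $G(t)\to\infty$ as $t\uparrow T^{*}$, a contradiction; hence $T_{**}\le T^{*}$. The algebraic rate is the same display rewritten as $\norm{U_0}^2 (T^{*})^{4/p}(T^{*}-t)^{-4/p}$.

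The main obstacle is the bookkeeping over which interval the concavity holds on. We must make sure that concavity on $[0,T]$ requires neither a solution on all of $[0,t_0]$ nor any information beyond $T$. If $G$ were defined only up to $T$, we would apply the argument on $[0,T]$ and note that the sign conditions hold pointwise for $t\le t_0$. Restricting solutions to subintervals also preserves the moment class, which is what makes the bound valid uniformly up to $T_{**}$.
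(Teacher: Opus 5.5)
Your proposal is correct and follows essentially the same route as the paper. Both use concavity of $H=G^{-p/4}$ on an interval where $\Ecal(0)+\tfrac{\kappa}{2}t\le0$, inherited from the proof of Theorem~\ref{thm:blowup}. Both then apply the tangent-line bound $H(t)\le H(0)\bigl(1-t/T^{*}\bigr)$ with $H'(0)/H(0)=-1/T^{*}$, invert it via the decreasing map $x\mapsto x^{-4/p}$, and obtain $T_{**}\le T^{*}$ by contradiction with the finiteness of $G$.

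Your interval bookkeeping is slightly more careful than the paper's. The paper asserts concavity on $[0,t_0]\supset[0,T^{*}]$, even though a solution in $\mathcal S_T$ is only defined on $[0,T]$. Arguing on $[0,T]$ and restricting solutions to subintervals, as you do, is the cleaner formulation.
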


\begin{proof}
With $G(t)=\E\norm{u(t)}^2_{L^2}$, $\alpha=p/4$ and $H=G^{-\alpha}$, the
proof of Theorem~\ref{thm:blowup} gives $H$ concave on
$[0,t_0]\supset[0,T^{*}]$ with
\[
\frac{H'(0)}{H(0)}=-\alpha\,\frac{G'(0)}{G(0)}
=-\frac p4\cdot\frac{2\ip{U_0}{V_0}}{\norm{U_0}^2}=-\frac1{T^{*}} .
\]
Hence the affine majorant of the concave function $H$ is
$H(t)\le H(0)+H'(0)t=H(0)\bigl(1-t/T^{*}\bigr)$ on $[0,t_0]$. On
$[0,T_{**})$ we have $G(t)<\infty$, so $H(t)>0$, and therefore
$G(t)=H(t)^{-1/\alpha}\ge\bigl[H(0)(1-t/T^{*})\bigr]^{-1/\alpha}
=\norm{U_0}^2(1-t/T^{*})^{-4/p}$, which is \eqref{eq:profile}. Since the
right-hand side tends to $+\infty$ as $t\uparrow T^{*}$ while $G$ is finite
on $[0,T_{**})$, necessarily $T_{**}\le T^{*}$.
\end{proof}

\subsection{Local well-posedness with stopping times}

For $R\ge1$ define
\begin{equation}\label{eq:sigmaR}
  \sigma_R:=\inf\bigl\{t\ge0:\ \norm{Z(t)}_{\Vcal}
  +\normLp{\partial_tz(t)}{p+2}\ge R\bigr\},
  \qquad
  \Omega_R:=\bigl\{\norm{(U_0,V_0)}_{\Vcal}\le R\bigr\}\in\mathcal F_0 .
\end{equation}
By Lemma~\ref{lem:z}, each $\sigma_R$ is an $\{\mathcal F_t\}$-stopping
time, $\sigma_R>0$ a.s.\ and $\sigma_R\uparrow\infty$ a.s.; and
$\PP(\Omega_R)\uparrow1$.

\begin{theorem}\label{thm:local-fixed}
Assume \eqref{eq:local-range} and \eqref{eq:gamma-strong}, and let
$U_0\in V$, $V_0\in H$ be $\mathcal F_0$-measurable. Set
\begin{equation}\label{eq:TR}
  T_R:=\min\bigl\{1,\ c_\star(1+2R)^{-p}\bigr\},
  \qquad
  \underline\tau_R:=\sigma_R\wedge T_R ,
\end{equation}
with $c_\star=c_\star(p,C_S)$ as in Step~1 of
Theorem~\ref{thm:global-dd}. Then $\underline\tau_R$ is a stopping time,
and there is a unique $\{\mathcal F_t\}$-adapted maximal solution
$(u,\partial_tu)\in C([0,\tau_\infty);\Vcal)$ of the focusing problem
\eqref{eq:focusing} with $\tau_\infty$ a stopping time and
\begin{equation}\label{eq:tau-lower-fixed}
  \tau_\infty\;\ge\;\underline\tau_R\qquad\PP\text{-a.s.\ on }\Omega_R,
  \qquad\text{for every }R\ge1 .
\end{equation}
Moreover the blow-up alternative
\begin{equation}\label{eq:blowup-alt}
\lim_{t\uparrow\tau_\infty}\Bigl(\norm{u(t)}_{H^1_0}
+\norm{\partial_tu(t)}_{L^2}\Bigr)=+\infty
\quad\text{on }\{\tau_\infty<\infty\}
\end{equation}
holds ( the controlling quantity being the \emph{energy norm}, not the
$L^2$-norm tracked in Theorem~\ref{thm:blowup}) and
\begin{equation}\label{eq:tau-positive}
  \lim_{T\downarrow0}\PP\bigl(\tau_\infty>T\bigr)=1,
  \qquad\text{and}\qquad
  \E\sup_{t\le\underline\tau_R}\norm{(u,\partial_tu)(t)}_{\Vcal}^{q}<\infty
  \ \ \text{on }\Omega_R,\ \ q<\infty .
\end{equation}
\end{theorem}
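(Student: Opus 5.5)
The plan is to reuse the pathwise reduction of Lemma~\ref{lem:reduction} with the focusing sign. Writing $u=y+z$, the pair $Y=(y,\partial_ty)$ must solve, for each fixed $\omega$, the Duhamel equation
\[
Y(t)=S(t)(U_0,V_0)+\int_0^tS(t-r)\,\Gbb\,N\bigl(y(r),z(r)\bigr)\,dr .
\]
The sign change does not affect \eqref{eq:Nlip-defoc}--\eqref{eq:Ngrowth-defoc}, since these estimates only involve $\normH{N(\cdot,\zeta)}$. Fix $R\ge1$ and work on $\Omega_R$ up to $\sigma_R$. There $\norm{(U_0,V_0)}_{\Vcal}\le R$ and $\sup_{t\le\sigma_R}\norm{Z(t)}_{\Vcal}\le R$, so the quantity playing the role of \eqref{eq:Rdef} is bounded by $1+2R$. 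Step~1 of Theorem~\ref{thm:global-dd} then applies verbatim on $[0,T_R]$, with $T_R=c_\star(1+2R)^{-p}\wedge1$ as in \eqref{eq:Tlocal}.

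The only subtlety is that $z$ is controlled only up to $\sigma_R$, which may be smaller than $T_R$. To handle this I would run the contraction with the stopped convolution $z^{R}(t):=z(t\wedge\sigma_R)$. Its norm is bounded by $R$ for all $t$, so the fixed point exists on the full interval $[0,T_R]$ for every $\omega\in\Omega_R$. It coincides with a genuine solution on $[0,\underline\tau_R]$.

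For adaptedness, I would repeat Step~4 of Theorem~\ref{thm:global-dd}. The fixed point is a locally Lipschitz, non-anticipating functional of $(U_0,V_0,z^R|_{[0,t]})$, and $z^R$ is adapted because $\sigma_R$ is a stopping time, being the hitting time of a closed set by a continuous adapted process (Lemma~\ref{lem:z}(iii)). Since $T_R$ is $\mathcal F_0$-measurable (indeed deterministic), $\underline\tau_R=\sigma_R\wedge T_R$ is a stopping time. Local uniqueness follows by the Gronwall argument of Proposition~\ref{prop:uniqueness}, which again does not see the sign.

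To build the maximal solution, I would iterate the local construction from the random restart time $\underline\tau_R$, using the strong Markov/restart property of the additive-noise reduction: the new data $Y(\underline\tau_R)$ is $\mathcal F_{\underline\tau_R}$-measurable, and the shifted convolution $Z(\underline\tau_R+\cdot)-S(\cdot)Z(\underline\tau_R)$ plays the role of $Z$. By uniqueness, the solutions obtained from different $R$ and different restarts glue together. Define
\[
\tau_\infty:=\sup\{t:\ \text{the solution exists on }[0,t]\},
\qquad
\tau_\infty=\lim_n\tau^{(n)},
\]
where $\tau^{(n)}$ is the hitting time of level $n$ by $\norm{(u,\partial_tu)}_{\Vcal}+\norm{Z}_{\Vcal}$. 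This exhibits $\tau_\infty$ as a limit of stopping times, and \eqref{eq:tau-lower-fixed} holds by construction.

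The blow-up alternative \eqref{eq:blowup-alt} follows from the size of the existence interval. That interval depends only on $R^{-p}$, where $R$ bounds the data and $Z$, and $Z$ is bounded on compacts. So if $\norm{(u,\partial_tu)(t_n)}_{\Vcal}$ stayed bounded along some sequence $t_n\uparrow\tau_\infty<\infty$, one could restart from $t_n$ close to $\tau_\infty$ with a uniform step and extend past $\tau_\infty$, a contradiction. This argument yields $\limsup=\infty$. The upgrade to $\lim=\infty$ is the usual one: once the norm is finite at some time, the solution persists for a uniform time while its norm stays below twice that value. I expect this restart/gluing to be the main technical obstacle, specifically verifying measurability at random restart times and that the shifted noise is independent of $\mathcal F_{\underline\tau_R}$. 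I would first try to avoid restarts by working pathwise with deterministic restart times on a dense grid and appealing to uniqueness.

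Finally, \eqref{eq:tau-positive} follows directly. For any $R$,
\[
\PP(\tau_\infty>T)\ge\PP(\Omega_R\cap\{\sigma_R>T\})\quad\text{whenever }T<T_R .
\]
Letting $T\downarrow0$ and then $R\to\infty$ gives limit $1$, since $\sigma_R>0$ a.s. and $\PP(\Omega_R)\uparrow1$. The moment bound holds because the fixed point lies in the ball of radius $2(1+2R)$ on $[0,\underline\tau_R]$ on $\Omega_R$. Hence $\sup_{t\le\underline\tau_R}\norm{(u,\partial_tu)}_{\Vcal}\le 2(1+2R)+R$ there, and the $q$-th moment is finite trivially.
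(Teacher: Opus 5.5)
Your proposal is correct and follows essentially the same route as the paper: the sign-blind Step~1 contraction with the deterministic radius $1+2R$ on $\Omega_R$ up to $\sigma_R$, adaptedness via the non-anticipating solution map, and gluing by the uniqueness of Proposition~\ref{prop:uniqueness}. The remaining steps also match: the uniform-restart contradiction for \eqref{eq:blowup-alt} and the deterministic ball bound for the moments. Your stopped convolution $z(t\wedge\sigma_R)$ is actually a cleaner way than the paper's wording to make the contraction run on the random interval $[0,\underline\tau_R]$.

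Two small remarks. First, the reduction is pathwise for fixed $\omega$, so restarting needs neither a strong Markov property nor independence of the shifted noise. Second, your sequence argument already rules out $\liminf<\infty$, exactly as in the paper, so no separate upgrade from $\limsup$ to $\lim$ is needed.
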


\begin{proof}
Throughout, $N(w,\zeta)=\abs{w+\zeta}^p(w+\zeta)$ as in Step~1 of
Theorem~\ref{thm:global-dd}. The focusing equation \eqref{eq:focusing}
differs from the defocusing one only by the sign in front of $\Gbb N$, and
both \eqref{eq:Ngrowth-defoc} and \eqref{eq:Nlip-defoc} are insensitive to
that sign; the fixed-point argument of Step~1 therefore applies verbatim,
with the same constant $c_\star=c_\star(p,C_S)$.
 
 The process
$t\mapsto\norm{Z(t)}_{\Vcal}+\normLp{\partial_tz(t)}{p+2}$ is
$\{\mathcal F_t\}$-adapted and has continuous paths, by
Lemma~\ref{lem:z}(i) and (iii) respectively. Its hitting time $\sigma_R$ of
the closed set $[R,\infty)$ is therefore a stopping time, with no appeal to
the d\'ebut theorem: for a continuous adapted real process $X$,
\[
  \{\sigma_R\le t\}=\Bigl\{\sup_{r\in\mathbb{Q}\cap[0,t]}X(r)\ge R\Bigr\}
  \in\mathcal F_t .
\]
Since $T_R$ in \eqref{eq:TR} is deterministic,
$\underline\tau_R=\sigma_R\wedge T_R$ is a stopping time. Also
$\sigma_R>0$ a.s.\ because $Z(0)=0$ and the paths are continuous, and
$\sigma_R\uparrow\infty$ a.s.\ because each path is bounded on compacts;
finally $\Omega_R=\{\norm{(U_0,V_0)}_{\Vcal}\le R\}\in\mathcal F_0$ by the
$\mathcal F_0$-measurability of the data, and $\PP(\Omega_R)\uparrow1$.

On $\Omega_R\cap\{t<\sigma_R\}$,
\[
  1+\norm{(U_0,V_0)}_{\Vcal}+\sup_{r\le t}\norm{Z(r)}_{\Vcal}
  \;\le\;1+2R ,
\]
so the quantity called $R$ in \eqref{eq:Rdef} is bounded by the
\emph{deterministic} number $1+2R$, and the admissible step
\eqref{eq:Tlocal} is bounded below by the deterministic
$c_\star(1+2R)^{-p}\ge T_R$. Consequently the contraction of Step~1 of
Theorem~\ref{thm:global-dd} runs on the whole random interval
$[0,\underline\tau_R]$ and produces a unique
$Y=(y,\partial_ty)\in C([0,\underline\tau_R];\Vcal)$ with
\begin{equation}\label{eq:ball-loc}
  \sup_{t\le\underline\tau_R}\norm{Y(t)}_{\Vcal}\le2(1+2R)
  \qquad\text{on }\Omega_R .
\end{equation}
Setting $u:=y+z$ gives a solution of \eqref{eq:focusing} on
$[0,\underline\tau_R]$ with
$\sup_{t\le\underline\tau_R}\norm{(u,\partial_tu)}_{\Vcal}\le3(1+2R)$.
 
 By Step~4 of Theorem~\ref{thm:global-dd} the
deterministic solution map
\[
  \Theta_t:\ \Vcal\times C([0,t];\Vcal)\longrightarrow\Vcal,
  \qquad
  \bigl((U_0,V_0),\,Z|_{[0,t]}\bigr)\longmapsto (y,\partial_ty)(t),
\]
is continuous, hence Borel. The pair $\bigl((U_0,V_0),Z|_{[0,t]}\bigr)$ is
$\mathcal F_t$-measurable as a random element of
$\Vcal\times C([0,t];\Vcal)$ - the latter being a Polish space, so that
measurability of the path segment follows from measurability of the
evaluations -and therefore $Y(t)=\Theta_t\bigl((U_0,V_0),Z|_{[0,t]}\bigr)$
is $\mathcal F_t$-measurable on $\{t<\underline\tau_R\}\cap\Omega_R$. Adaptedness of $u=y+z$ follows since $z$ is adapted.
 
 For
$R\le R'$ one has $\Omega_R\subset\Omega_{R'}$ and
$\underline\tau_R\le\underline\tau_{R'}$, and on
$\Omega_R\cap[0,\underline\tau_R]$ the two constructions solve the same
Duhamel equation; by the local uniqueness in Step~1 of
Theorem~\ref{thm:global-dd} --- which holds in all of $C([0,t];\Vcal)$, not
merely in the ball --- they coincide there. The solutions therefore glue
consistently across $R\in\N$, and since $\PP(\Omega_R)\uparrow1$ they define
a single process on $\bigcup_R\Omega_R$, a set of full measure. Restarting
the construction at $\underline\tau_R$ with data
$(u,\partial_tu)(\underline\tau_R)$ and iterating yields a maximal solution on $[0,\tau_\infty)$,
where
\[
  \tau'_R:=\inf\bigl\{t\ge0:\ \norm{(u,\partial_tu)(t)}_{\Vcal}\ge R\bigr\},
  \qquad \tau_\infty:=\lim_{R\to\infty}\tau'_R .
\]
Each $\tau'_R$ is a stopping time by the above argument applied to the
continuous adapted process $\norm{(u,\partial_tu)(\cdot)}_{\Vcal}$, and an
increasing limit of stopping times is a stopping time; hence so is
$\tau_\infty$. Estimate \eqref{eq:tau-lower-fixed} is
\eqref{eq:ball-loc}: on $\Omega_R$ the solution exists on
$[0,\underline\tau_R]$, so $\tau_\infty\ge\underline\tau_R$ there.
 
Suppose, on a set of positive
probability contained in $\{\tau_\infty<\infty\}$, that
\[
  \liminf_{t\uparrow\tau_\infty}\norm{(u,\partial_tu)(t)}_{\Vcal}
  =:K<\infty .
\]
Fix such an $\omega$ and choose $t_n\uparrow\tau_\infty$ with
$\norm{(u,\partial_tu)(t_n)}_{\Vcal}\le K+1$. Because $\tau_\infty<\infty$
the path of $Z$ is bounded on $[0,\tau_\infty]$, say by $K'$, so restarting
at $t_n$ the radius \eqref{eq:Rdef} is at most $1+(K+1)+K'=:\bar R$ and the
existence time granted by \eqref{eq:Tlocal} is at least
$c_\star\bar R^{-p}$, a quantity \emph{independent of $n$}. Choosing $n$ with
$\tau_\infty-t_n<c_\star\bar R^{-p}$ extends the solution beyond
$\tau_\infty$, contradicting maximality. Therefore no such $\omega$ exists,
and $\liminf_{t\uparrow\tau_\infty}\norm{(u,\partial_tu)(t)}_{\Vcal}=+\infty$
almost surely on $\{\tau_\infty<\infty\}$. Since the $\liminf$ of a function
is at most its $\limsup$, both must equal $+\infty$, so the limit exists in
the extended sense and equals $+\infty$; this is \eqref{eq:blowup-alt}. 
 
For $T>0$ and
$R\in\N$, \eqref{eq:tau-lower-fixed} gives
\[
  \PP(\tau_\infty\le T)
  \le\PP(\Omega_R^{c})+\PP(\underline\tau_R\le T)
  \le\PP(\Omega_R^{c})+\PP(\sigma_R\le T)+\mathbf 1_{\{T_R\le T\}} .
\]
Given $\epsilon>0$ choose $R$ with $\PP(\Omega_R^c)<\epsilon/2$, then use
$\sigma_R>0$ a.s.\ to choose $T<T_R$ small enough that
$\PP(\sigma_R\le T)<\epsilon/2$; this proves
$\lim_{T\downarrow0}\PP(\tau_\infty>T)=1$. For the moment bound, note that
\eqref{eq:ball-loc} is a \emph{deterministic} bound: on $\Omega_R$,
\[
  \sup_{t\le\underline\tau_R}\norm{(u,\partial_tu)(t)}_{\Vcal}\le3(1+2R)
  \quad\text{a.s.},
\]
so every moment is finite, with $\E\sup_{t\le\underline\tau_R}
\norm{(u,\partial_tu)}^q_{\Vcal}\le\bigl[3(1+2R)\bigr]^{q}$ on $\Omega_R$.

Uniqueness on common intervals of existence is
Proposition~\ref{prop:uniqueness}, whose range coincides with
\eqref{eq:local-range}.
\end{proof}

\begin{corollary}[Localization of the explosion]\label{cor:sandwich-fixed}
Assume the hypotheses of Theorem~\ref{thm:local-fixed} and
\eqref{eq:blowup-cond}. Then, with
$T^{*}=\frac{2\normLp{U_0}2^2}{p\ip{U_0}{V_0}}$ and
$T_{**}=\sup\{T:\mathcal S_T\ne\emptyset\}$,
\[
  \tau_\infty>0\ \ \PP\text{-a.s.},
  \qquad \lim_{T\downarrow0}\PP(\tau_\infty>T)=1,
  \qquad T_{**}\;\le\;T^{*},
\]
and for every $R\ge1$, on $\Omega_R$,
$\ \E\sup_{t\le\underline\tau_R}\bigl(\normV{u}^2+\normLp u{p+2}^{p+2}
+\normH v^2\bigr)<\infty$. If in addition the data are a.s.\ bounded,
$\norm{(U_0,V_0)}_{\Vcal}\le R_0$ a.s., then
$\tau_\infty\ge\underline\tau_{R_0}$ a.s.
\end{corollary}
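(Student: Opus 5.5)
The corollary is an assembly of Theorem~\ref{thm:local-fixed}, Theorem~\ref{thm:blowup} and Proposition~\ref{prop:profile}, so I will prove it item by item, deriving each claim from the relevant earlier estimate rather than redoing any fixed-point or concavity argument.

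\emph{Positivity of $\tau_\infty$.} By \eqref{eq:tau-lower-fixed}, $\tau_\infty\ge\underline\tau_R=\sigma_R\wedge T_R$ a.s.\ on $\Omega_R$. Here $T_R>0$ is deterministic, and $\sigma_R>0$ a.s.\ because $Z(0)=0$ and $Z$ has continuous paths in both norms (Lemma~\ref{lem:z}(i),(iii)). Hence $\tau_\infty>0$ a.s.\ on each $\Omega_R$. Since the $\Omega_R$ increase to a set of full measure, positivity holds $\PP$-a.s.

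\emph{The limit $\lim_{T\downarrow0}\PP(\tau_\infty>T)=1$.} This is exactly the first half of \eqref{eq:tau-positive}; alternatively it follows from a.s.\ positivity by monotone continuity of $\PP$.

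\emph{The bound $T_{**}\le T^*$.} Hypothesis \eqref{eq:blowup-cond} is precisely that of Theorem~\ref{thm:blowup}, which gives $T_{**}\le T^*$ (as does Proposition~\ref{prop:profile}). I will remark that no compatibility between the two notions is needed. $T_{**}$ concerns the moment class $\mathcal S_T$, while $\tau_\infty$ is pathwise, so I will make no claim relating them.

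\emph{Moment bound on $\Omega_R$.} The second half of \eqref{eq:tau-positive}, or directly the deterministic bound $\sup_{t\le\underline\tau_R}\norm{(u,\partial_tu)}_{\Vcal}\le 3(1+2R)$ from \eqref{eq:ball-loc}, controls $\normV u^2+\normH v^2$. For the $L^{p+2}$ term I use $V\hookrightarrow L^{p+2}$, valid under Assumption~\ref{ass:standing}(ii) with constant $C$. This gives $\normLp u{p+2}^{p+2}\le C^{p+2}\normV u^{p+2}\le C^{p+2}[3(1+2R)]^{p+2}$. The expectation of the supremum is therefore bounded by a deterministic constant depending on $R$ only.

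\emph{Almost surely bounded data.} If $\norm{(U_0,V_0)}_{\Vcal}\le R_0$ a.s., then $\PP(\Omega_{R_0})=1$. Applying \eqref{eq:tau-lower-fixed} with $R=R_0$ then gives $\tau_\infty\ge\underline\tau_{R_0}$ a.s.

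The only delicate point is interpretive. The moment bound holds on $\Omega_R$, and I read it as $\E[\mathbf 1_{\Omega_R}\sup_{t\le\underline\tau_R}(\cdots)]<\infty$, which the deterministic bound yields immediately. Everything else is direct citation of the earlier results.
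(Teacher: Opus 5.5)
Your proposal is correct and follows the paper's proof, which reads every claim off Theorem~\ref{thm:local-fixed} and takes $T_{**}\le T^{*}$ from Proposition~\ref{prop:profile}; citing Theorem~\ref{thm:blowup} for that bound is equally valid. The only detail you add, which the paper leaves implicit in ``immediate'', is controlling $\normLp{u}{p+2}^{p+2}$ on $\Omega_R$ through $V\hookrightarrow L^{p+2}$ and the deterministic bound $3(1+2R)$ from \eqref{eq:ball-loc}.
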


\begin{proof}
Immediate from Theorem~\ref{thm:local-fixed}; the inequality
$T_{**}\le T^{*}$ is Proposition~\ref{prop:profile}.
\end{proof}

\subsection{From non-existence in the moment class to pathwise explosion}
\label{sec:pathwise}

\begin{theorem}\label{thm:dichotomy}
Assume the hypotheses of Theorems~\ref{thm:local-fixed}
and~\ref{thm:blowup} and let $(u,\tau_\infty)$ be the unique maximal
solution of \eqref{eq:focusing}. Then exactly one of the following holds:
\begin{enumerate}
\item[\emph{(a)}] $\PP\bigl(\tau_\infty\le T^{*}\bigr)>0$; or
\item[\emph{(b)}] $\tau_\infty>T^{*}$ $\PP$-a.s.\ and
\begin{equation}\label{eq:infinite-moment}
  \E\sup_{t\le T^{*}}\Bigl(\norm{u(t)}_{H^1_0}^2
  +\normLp{u(t)}{p+2}^{p+2}
   +\normH{\partial_tu(t)}^2\Bigr)\;=\;+\infty .
\end{equation}
\end{enumerate}
In either case the solution fails to lie in the moment class
\eqref{eq:moment-class} on $[0,T^{*}]$: the energy norm has no finite
second moment up to time $T^{*}$.
\end{theorem}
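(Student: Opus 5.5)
The statement is a logical dichotomy, so I would argue by cases on the event $\{\tau_\infty\le T^{*}\}$. The two alternatives are mutually exclusive by construction: (a) asserts $\PP(\tau_\infty\le T^{*})>0$, while (b) contains $\tau_\infty>T^{*}$ a.s. So it remains to show that whenever (a) fails, the moment identity \eqref{eq:infinite-moment} holds. Suppose therefore that $\PP(\tau_\infty\le T^{*})=0$, i.e.\ $\tau_\infty>T^{*}$ almost surely. By Theorem~\ref{thm:local-fixed}, the maximal solution is then defined on all of $[0,T^{*}]$ almost surely. It is $\{\mathcal F_t\}$-adapted and has trajectories $(u,\partial_tu)\in C([0,T^{*}];\Vcal)$. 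Since $V=H^1_0\hookrightarrow L^{p+2}$ under Assumption~\ref{ass:standing}(ii), its trajectories also lie in $\calH$.

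I would then argue by contradiction. Assume the left-hand side of \eqref{eq:infinite-moment} is finite. Then $(u,\partial_tu)$ satisfies \eqref{eq:moment-class} with $T=T^{*}$. Together with pathwise continuity and adaptedness, this makes it a solution of \eqref{eq:focusing} on $[0,T^{*}]$ belonging to $\mathcal S_{T^{*}}$. But under \eqref{eq:blowup-cond}, Theorem~\ref{thm:blowup} asserts $\mathcal S_{T^{*}}=\emptyset$, which is a contradiction. Hence \eqref{eq:infinite-moment} holds and (b) is established.

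The main point needing care is whether the maximal solution really qualifies as a member of $\mathcal S_{T^{*}}$. Theorem~\ref{thm:blowup} uses the focusing It\^o energy identity \eqref{eq:energy-identity-foc} and the It\^o formula for $\ip uv$. For these I would check that the Duhamel solution of Theorem~\ref{thm:local-fixed} is also a weak solution in the sense of \eqref{eq:weak1}--\eqref{eq:weak2}, with the focusing sign. This follows from Lemma~\ref{lem:reduction}, whose argument is insensitive to the sign of the nonlinearity. The energy identity under the moment bound is then asserted in the paper just before \eqref{eq:energy-identity-foc} for any solution in the class. I would also note that \eqref{eq:moment-class} uses $\norm{\cdot}_{H^1_0}$, which is equivalent to $\normV{\cdot}$ by Lemma~\ref{lem:forms}, so no discrepancy of norms arises.

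For the final sentence of the statement: in case (b) it is exactly \eqref{eq:infinite-moment}. In case (a) the solution is not even defined on $[0,T^{*}]$ on a set of positive probability. By the blow-up alternative \eqref{eq:blowup-alt}, the energy norm is unbounded there, so $\sup_{t\le T^{*}}$ of the energy norm is $+\infty$ on a set of positive measure. Its second moment is then infinite, and the process is not in \eqref{eq:moment-class}.
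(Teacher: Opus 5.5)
Your proposal is correct and follows essentially the same route as the paper: if $\tau_\infty>T^{*}$ a.s.\ and the supremum in \eqref{eq:infinite-moment} were finite, the maximal solution would belong to $\mathcal S_{T^{*}}$, which contradicts Theorem~\ref{thm:blowup}. Your additional checks make explicit what the paper's two-line argument leaves implicit: that the Duhamel solution is a weak solution via Lemma~\ref{lem:reduction}, and that \eqref{eq:blowup-alt} handles the final sentence in case (a).
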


\begin{proof}
If (b) fails and $\tau_\infty>T^{*}$ a.s., then $(u,\partial_tu)$ is a
solution on $[0,T^{*}]$ satisfying \eqref{eq:moment-class}, i.e.\
$\mathcal S_{T^{*}}\ne\emptyset$, contradicting
Theorem~\ref{thm:blowup}. Hence $\tau_\infty>T^{*}$ a.s.\ forces
\eqref{eq:infinite-moment}, which is (b); otherwise
$\PP(\tau_\infty\le T^{*})>0$, which is (a).
\end{proof}

\begin{proposition}
\label{prop:smallball}
Let $y:=u-z$, so that (Lemma~\ref{lem:reduction} with the focusing sign)
\begin{equation}\label{eq:reduced-foc}
  \partial_t^2y+\Acal y=\abs{u}^{p}u,\qquad u=y+z,\qquad
  y(0)=U_0,\ \partial_ty(0)=V_0 .
\end{equation}
Put
\[
  G(t):=\normLp{y(t)}2^2,\qquad
  E(t):=\tfrac12\normH{\partial_ty}^2+\tfrac12\ip{\Acal y}{y}
        -\tfrac1{p+2}\normLp{u}{p+2}^{p+2},\qquad
  \Theta(t):=\normLp{u(t)}{p+2}^{p+2} .
\]
Then, pathwise on $[0,\tau_\infty)$,
\begin{align}
  G'(t)&=2\ip{y}{\partial_ty},\label{eq:G1-path}\\
  E'(t)&=-\bigl\langle\abs u^pu,\ \partial_tz\bigr\rangle,
   \qquad
   \abs{E'(t)}\le\Theta(t)^{\frac{p+1}{p+2}}\normLp{\partial_tz(t)}{p+2},
   \label{eq:E-path}\\
  G''(t)&=(p+4)\normH{\partial_ty}^2+p\,\ip{\Acal y}{y}-2(p+2)E(t)
   \;-\;2\bigl\langle\abs u^pu,\ z\bigr\rangle,\label{eq:G2-path}
\end{align}
with
$\bigl\lvert\ip{\abs u^pu}{z}\bigr\rvert
\le\Theta^{\frac{p+1}{p+2}}\normLp z{p+2}$.
Moreover, for every $\varepsilon>0$ and $T>0$ the event
\begin{equation}\label{eq:Aeps}
  A_\varepsilon^T:=\Bigl\{\ \sup_{t\le T}\bigl(\normLp{z(t)}{p+2}
   +\normLp{\partial_tz(t)}{p+2}\bigr)\le\varepsilon\ \Bigr\}
\end{equation}
satisfies $\PP\bigl(A_\varepsilon^T\bigr)>0$.
\end{proposition}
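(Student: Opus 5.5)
The plan is to work pathwise for a fixed $\omega$ on $[0,\tau_\infty)$, where Theorem~\ref{thm:local-fixed} gives $(u,\partial_tu)\in C([0,\tau_\infty);\Vcal)$. By Lemma~\ref{lem:z}(iii), $z\in C([0,T];V)$ and $\partial_tz\in C([0,T];L^{p+2})$. Hence $y=u-z$ satisfies $y\in C([0,\tau_\infty);V)\cap C^1([0,\tau_\infty);H)$. By the focusing version of Lemma~\ref{lem:reduction} (the sign only flips the nonlinearity), $y$ solves \eqref{eq:reduced-foc}. So $\partial_t^2y=-\Acal y+\abs u^pu\in C([0,\tau_\infty);V^*)$, because $\abs u^pu\in C(H)$ by $V\hookrightarrow L^{2(p+1)}$ in the range \eqref{eq:local-range}.

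All three identities are first- and second-order chain-rule computations. I would justify them exactly as in Step~2 of the proof of Theorem~\ref{thm:global-dd}, by time mollification \cite{LionsMagenes1972,Temam1997}. Since $\partial_ty$ is a genuine $C^1$ function with values in $H$, \eqref{eq:G1-path} is immediate.

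For \eqref{eq:E-path}, I would repeat the cancellation of Step~2 with the focusing sign. The quadratic part gives
\[
\tfrac{d}{dt}\Bigl(\tfrac12\normH{\partial_ty}^2+\tfrac12\ip{\Acal y}{y}\Bigr)=[\partial_t^2y+\Acal y,\partial_ty]=\ip{\abs u^pu}{\partial_ty}.
\]
The potential part, using $F(a)=\abs a^{p+2}\in C^1$ and $\partial_tu=\partial_ty+\partial_tz$, gives
\[
-\tfrac1{p+2}\Theta'=-\ip{\abs u^pu}{\partial_ty}-\ip{\abs u^pu}{\partial_tz}.
\]
The $\partial_ty$ terms cancel, which leaves the stated formula. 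The bound in \eqref{eq:E-path} is H\"older with exponents $\tfrac{p+2}{p+1}$ and $p+2$, together with $\normLp{\abs u^pu}{(p+2)'}=\Theta^{(p+1)/(p+2)}$.

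For \eqref{eq:G2-path}, I would differentiate \eqref{eq:G1-path} once more in the $V^*$--$V$ duality:
\[
G''=2\normH{\partial_ty}^2+2[\partial_t^2y,y]=2\normH{\partial_ty}^2-2\ip{\Acal y}{y}+2\ip{\abs u^pu}{y}.
\]
Next I would write $y=u-z$, so that $\ip{\abs u^pu}{y}=\Theta-\ip{\abs u^pu}{z}$. Then I would eliminate $\Theta$ through the definition of $E$:
\[
\Theta=(p+2)\bigl(\tfrac12\normH{\partial_ty}^2+\tfrac12\ip{\Acal y}{y}-E\bigr).
\]
Collecting terms gives the coefficients $2+(p+2)=p+4$ and $-2+(p+2)=p$, i.e.\ exactly \eqref{eq:G2-path}. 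The bound on $\ip{\abs u^pu}{z}$ is the same H\"older estimate with $z$ in place of $\partial_tz$. This is the analogue of \eqref{eq:G2-exact} in the proof of Theorem~\ref{thm:blowup}, but now pathwise and without an It\^o correction, since $y$ carries no noise.

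For the small-ball claim I would observe that $z$ is the linear stochastic convolution of Definition~\ref{def:z}, independent of the sign $\varepsilon$. Lemma~\ref{lem:z}(iii) states that (ii) holds with $\norm{\cdot}_{\Vcal}$ replaced by $\normV{z}+\normLp{\partial_tz}{p+2}$. Let $C_S$ be the constant of $V\hookrightarrow L^{p+2}$. Then
\[
\Bigl\{\sup_{t\le T}\bigl(\normV{z}+\normLp{\partial_tz}{p+2}\bigr)\le \varepsilon/(1+C_S)\Bigr\}\subset A^T_\varepsilon,
\]
and the left-hand event has positive probability. Underneath, this is the Gaussian-support argument: the law of $(z,\partial_tz)$ on the separable Banach space $C([0,T];V)\times C([0,T];L^{p+2})$ is centred Gaussian, so its support is a closed linear subspace containing $0$.

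I expect the main obstacle to be the rigorous justification of the second derivative of $G$ and of the chain rule for $\Theta$ at the energy level of regularity. Here $\partial_t^2y$ lies only in $V^*$ and $u$ only in $V$, so both require the Lions--Magenes regularisation argument rather than classical differentiation. Everything else is algebra already performed in Step~2 of the proof of Theorem~\ref{thm:global-dd} and in \eqref{eq:G2-exact}.
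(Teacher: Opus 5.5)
Your proposal is correct and follows essentially the same route as the paper. The identities come from the Step~2 cancellation with the focusing sign, and \eqref{eq:G2-path} from the substitution $\ip{\abs u^pu}{y}=\Theta-\ip{\abs u^pu}{z}$ with $\Theta$ eliminated through $E$; positivity of $\PP(A^T_\varepsilon)$ comes from the Gaussian small-ball property of Lemma~\ref{lem:z}(ii)--(iii). One small slip: for \eqref{eq:G1-path} what you need, and had already established, is that $y$ (not $\partial_ty$) is $C^1$ with values in $H$.
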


\begin{proof}
Identities \eqref{eq:G1-path}--\eqref{eq:G2-path} are the pathwise
counterparts of \eqref{eq:G1}--\eqref{eq:G2-exact}: $du=v\,dt$ carries no
martingale part, and \eqref{eq:E-path} is the computation of Step~2 of
Theorem~\ref{thm:global-dd} with the focusing sign. For
\eqref{eq:G2-path}, substitute
$\ip{\abs u^pu}{y}=\Theta-\ip{\abs u^pu}{z}$ into
$G''=2\normH{\partial_ty}^2-2\ip{\Acal y}{y}+2\ip{\abs u^pu}{y}$ and
eliminate $\Theta$ using
$\Theta=(p+2)\bigl(\tfrac12\normH{\partial_ty}^2
+\tfrac12\ip{\Acal y}y-E\bigr)$. The two H\"older bounds are immediate.
Positivity of $\PP(A^T_\varepsilon)$ is Lemma~\ref{lem:z}(ii)--(iii):
$\bigl(z,\partial_tz\bigr)$ is a centred Gaussian random element of
$C([0,T];L^{p+2})^2$ under \eqref{eq:gamma-strong}, and $0$ lies in the
support of every centered Gaussian measure.
\end{proof}

\section{Sample-path regularity and the inference programme}
\label{sec:inference}
The discussion in this section is restricted to the defocusing case 
($\varepsilon=+1$ in \eqref{stochastic_local_nonlocal}). As established 
in Theorem~\ref{thm:global-dd}, in the defocusing regime the unique global 
solution $(u(t), v(t))$ exists with probability one and possesses 
continuous trajectories in the energy space $\Vcal=V\times H$, ensuring 
well-defined sample-path properties and probabilistic regularity necessary 
for the parameter estimation and statistical inference program discussed 
below. In sharp contrast, the focusing case ($\varepsilon=-1$) exhibits 
finite-time blow-up: under the negativity condition 
\eqref{eq:blowup-cond} on the initial energy, (see Theorem~\ref{thm:blowup}).
 We therefore 
focus exclusively on the defocusing regime below.

\subsection{Sobolev embedding and H\"older regularity}

A crucial property for statistical applications is the spatial regularity of
the paths. According to the fractional Sobolev embedding theorem, if
$s > n/2$, then the space $H^s(\R^n)$ is continuously embedded in the
H\"older space $C^{0,\alpha}(\R^n)$ with $\alpha = s - n/2$
\cite[Section 2.8.1]{Triebel-78}, \cite[Section 5.6]{Evans-2010}.
Specifically, in the one-dimensional case ($n=1$):
\begin{itemize}
\item if $s \in (1/2, 1)$, then
$\widetilde H^s(\Ocal) \hookrightarrow C^{0, s-1/2}(\bar{\Ocal})$;
\item this implies that for each $t > 0$, the mapping
$x \mapsto u(t,x)$ is almost surely H\"older continuous.
\end{itemize}
This sample path regularity is the ``open door'': it justifies the use of
the Dirac delta distribution as a test function, allowing for point-wise
observations $u(t, x_i)$ which are essential for constructing a tractable
likelihood in real-world applications.

\subsection{Galerkin projection and Girsanov theorem: the challenge of dense
matrices}

In scenarios where the continuous embedding into a H\"older space does not
hold (e.g., higher dimensions or $s \le 1/2$), point-wise evaluations are no
longer mathematically rigorous, and an alternative approach is required. A
standard pathway involves the Galerkin projection (see \cite{Evans-2010})
combined with Girsanov's theorem \cite{Liptser-Shiryaev-2001}.

By projecting the infinite-dimensional SPDE onto a finite-dimensional
subspace, the problem is reduced to a system of $N$ stochastic differential
equations. However, unlike the spectral approach --- where the basis
functions diagonalize the operator and yield independent processes --- the
integral fractional Laplacian does not commute with the classical Laplacian
on a bounded domain. As a result, the Galerkin approximation yields a highly
coupled system of SDEs. When applying Girsanov's
theorem to derive the Radon--Nikodym derivative for the MLE, the resulting
likelihood function involves dense covariance matrices. While the analytical
formulation of the likelihood is achievable, the computational burden of
inverting these dense matrices scales poorly with $N$. The specific
numerical strategies and optimization algorithms required to efficiently
handle this dense matrix structure will be the subject of subsequent stages
of this research.


\bibliographystyle{abbrvnat}
\bibliography{references}

\end{document}